\numberwithin{equation}{section}
\theoremstyle{plain}
\newtheorem{theorem}{Theorem}[section]
\newtheorem{corollary}[theorem]{Corollary}
\newtheorem{lemma}[theorem]{Lemma}
\newtheorem{proposition}[theorem]{Proposition}
\theoremstyle{definition}
\newtheorem{definition}[theorem]{Definition}
\newtheorem{remark}[theorem]{Remark}
\DeclareMathAlphabet{\pazocal}{OMS}{zplm}{m}{n}
\let\eps\varepsilon
\newcommand{\R}{\mathbb{R}}
\let\S\relax
\newcommand{\S}{\mathbb{S}}
\newcommand*{\hausdorff}[1]{\mathcal{H}^{#1}}
\newcommand*{\comp}[1]{{#1}^{c}}
\newcommand*{\supp}[1]{\operatorname{supp}(#1)}
\newcommand*{\inter}[1]{\operatorname{int}(#1)}
\newcommand*{\Star}[1]{\operatorname{Star}(#1)}
\newcommand*{\cyl}[1]{\operatorname{Cylpair}(#1)}
\newcommand*{\Gr}[2][n-1]{\operatorname{Gr}_{#1}(#2)}
\newcommand*{\Strcvx}[2][n-1]{\operatorname{Strcvx}_{#1}(#2)}
\newcommand*{\regpts}[1]{\operatorname{reg}{#1}}
\newcommand*{\vect}[1]{\operatorname{span}(#1)}
\newcommand{\area}{\mathcal{A}}
\DeclareMathOperator{\vol}{\pazocal{V}}
\DeclareMathOperator{\LL}{\pazocal{L}}
\DeclareMathOperator{\CC}{\pazocal{C}}
\DeclareMathOperator{\CS}{\mathcal{C}}
\let\SS\relax 
\DeclareMathOperator{\SS}{\pazocal{S}}
\def\B{B}
\def\K{K}
\def\D{B}
\def\C{K}
\title[Holmes-Thompson area \& Santal\'o point]{The Santal\'o point for the Holmes-Thompson boundary area}
\author{Florent Balacheff}
\address{Florent Balacheff, Departament de Matem\`atiques, Universitat Aut\`onoma de Barcelona, Barcelona, Spain}
\email{fbalacheff@mat.uab.cat}
\author{Gil Solanes}
\address{Gil Solanes, Departament de Matem\`atiques, Universitat Aut\`onoma de Barcelona, Barcelona, Spain and Centre de Recerca Matem\`atica, Barcelona, Spain.}
\email{solanes@mat.uab.cat}
\author{Kroum Tzanev}
\address{Kroum Tzanev, Laboratoire Paul Painlev\'e, Universit\'e de Lille, Villeneuve d'Ascq, France.}
\email{kroum.tzanev@univ-lille.fr}
\keywords{Convex body, Crofton formula, Hausdorff measure, Holmes-Thompson area and volume, Minkowski geometry, Santal\'o point, symplectic geometry.}
\subjclass{Primary: 52A20, 52A40, 53C65. Secondary: 52A38}
\thanks{The first author acknowledges support by the FSE/AEI/MICINN grant RYC-2016-19334  ``Local and global systolic geometry and topology". The second author is supported by the Serra Hunter Programme.  The first and the second authors acknowledge support by the FEDER/AEI/MICIU grant PGC2018-095998-B-I00 ``Local and global invariants in geometry". }
\begin{document}
\begin{abstract}
  We explore the notion of Santal\'o point for the Holmes-Thompson boundary area of a convex body in a normed space. In the case where the norm is $C^1$, and in the case where unit ball and convex body coincide,  we prove existence and uniqueness. When the normed space has a smooth positively curved unit ball, we exhibit a dual Santal\'o point expressed as an average of centroids of projections of the dual body.
\end{abstract}
\maketitle

\section{introduction}

Several decades ago Santal\'o studied in \cite{San49} the functional
\begin{equation}\label{eq:vol_dual}
  x \in \inter{\K} \mapsto |(\K-x)^\circ|
\end{equation}
associated to any convex body $\K$ of $\R^n$. Here $|\cdot |$ denotes the Lebesgue measure of $\R^n$, and $A^\circ=\{y \in {\R}^n \mid \langle x,y\rangle=\sum_{i=1}^n x_iy_i\leq 1 \, \, \forall x \in A\}$ is the polar set of a convex body $A$. He found that this functional is proper and strictly convex. Consequently, there exists a unique minimizing point $s(\K)$ in the interior of $\K$, known nowadays as the {\it Santal\'o point} of $\K$. Along the way, Santal\'o computed the derivative of \eqref{eq:vol_dual} and showed that
\begin{equation}\label{eq:var_vol_dual}
  \left. \frac{d}{dt}\right|_{t=0} |(\K-tv)^\circ|=(n+1)\left\langle c(\K^\circ),v\right\rangle
\end{equation}
where $c(\K^\circ)=\int_{\K^\circ}  x \, \, dx$ is the centroid of the polar body $\K^\circ$ and $v$ any vector in $\R^n$. Therefore $s(\K)$ lies at the origin if and only if $c(\K^\circ)$ lies at the origin. Determining the Santal\'o point of a given convex body can be a difficult question, so this characterization is particularly useful.

As the Lebesgue measure coincides with the $n$-dimensional Hausdorff measure of $\R^n$, it is natural to consider the following functional similar to (\ref{eq:vol_dual})
\begin{equation}\label{eq:area_dual}
  x \in \inter{\K} \mapsto \hausdorff{n-1}(\partial(\K-x)^\circ),
\end{equation}
where $\hausdorff{n-1}$ denotes the $(n-1)$-dimensional Hausdorff measure, and look for possible minimizing points.

More generally, let $V$ be an  $n$-dimensional real vector space. To any convex body $\B$ in $V$ that contains the origin in its interior is associated a unique (asymmetric) norm $\| \cdot \|_{\B}$ whose unit ball is precisely $\B$. The restriction of $\|\cdot \|_{\B}$ to any hypersurface defines a Finsler metric whose corresponding Holmes-Thompson $(n-1)$-volume (or area in short)  will be denoted by $\area_{\B}(\cdot)$. If $\K$ is a convex body in $(V,\|\cdot\|_{\B})$, we can consider its dual body $\K^\ast=\{p \in V^\ast \mid p(x)\leq 1 \, \, \forall x \in \K\}$ as a convex body of the dual normed vector space $(V^\ast,\|\cdot\|_{\B^\ast})$ and focus on the associated Holmes-Thompson area of its boundary sphere, that is the quantity $\area_{\B^\ast}(\partial \K^\ast)$. In the special case where $V=\R^n$ and $\B$ is the Euclidean unit ball, we have $\area_{\B^\ast}(\partial \K^\ast)=\hausdorff{n-1}(\partial \K^\circ).$ Therefore functional  (\ref{eq:area_dual}) turns out to be a particular case of the functional
\[
  x \in \inter{\K}\mapsto \area_{\B^\ast}(\partial (\K-x)^\ast).
\]
A minimizing point of this functional will be called a {\it Santal\'o point} of $\K$ for the Holmes-Thompson area in $(V,\|\cdot\|_{\B})$.

In this paper we first prove existence and uniqueness of Santal\'o points for the Holmes-Thompson area in any normed vector space whose unit ball is of class $C^1$.

\begin{theorem}\label{th:santalo}
  Let $\K$ be a convex body in a finite-dimensional real normed vector space $(V,\|\cdot\|_{\B})$ whose unit ball $\B$ is of class $C^1$. The functional
  \[
    x \in \inter{\K}\mapsto \area_{\B^\ast}(\partial (\K-x)^\ast)
  \]
  is strictly convex and proper on the interior of $\K$.

  In particular there exists a unique minimizing point $\SS_{\B}(\K) \in \inter{\K}$ for this functional.
\end{theorem}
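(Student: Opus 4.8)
The plan is to reduce the statement to a fiberwise application of Santal\'o's classical convexity in one dimension less. Fix an auxiliary Euclidean structure on $V$ and identify $V\cong V^\ast$. First I would record the Holmes-Thompson area as a weighted Euclidean area: for a convex body $M$ with $0\in\inter{M}$,
\[
  \area_{\B^\ast}(\partial M^\ast)=\frac{1}{\omega_{n-1}}\int_{\partial M^\ast}\big|P_{\nu(z)^\perp}\B\big|_{n-1}\,d\hausdorff{n-1}(z),
\]
where $\nu(z)$ is the Euclidean outer normal and $P_{\nu^\perp}\B$ denotes the orthogonal projection of $\B$ onto the hyperplane $\nu^\perp$. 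This is the standard description of the Holmes-Thompson density (the dual unit ball of $\B^\ast$ is $\B$, and the induced co-disc is its projection), and it reduces to $\hausdorff{n-1}$ when $\B$ is the Euclidean ball. Pushing forward under the Gauss map of $M^\ast$ and recognising $|P_{w^\perp}\B|_{n-1}=h_{\Pi\B}(w)$ as the support function of the projection body $\Pi\B$, the integral becomes $\tfrac{n}{\omega_{n-1}}V_1(M^\ast,\Pi\B)$, a mixed volume with one fixed slot.

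The key manipulation is to move the fixed body into the other factor. Since $V_1(A,\Pi B)=\tfrac{1}{2n}\iint|\langle u,w\rangle|\,dS_B(w)\,dS_A(u)$ is symmetric in $A$ and $B$, one has $V_1(M^\ast,\Pi\B)=V_1(\B,\Pi M^\ast)=\tfrac{1}{n}\int_{S^{n-1}}|P_{\xi^\perp}M^\ast|_{n-1}\,dS_\B(\xi)$. Now invoke the section-projection duality of polarity: for a linear hyperplane $\xi^\perp$ and a body with the origin in its interior, $P_{\xi^\perp}M^\ast=(M\cap\xi^\perp)^{\circ}$, the polar taken inside $\xi^\perp$. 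Writing $M=\K-x$ this yields
\[
  \area_{\B^\ast}(\partial(\K-x)^\ast)=\frac{1}{\omega_{n-1}}\int_{S^{n-1}}\big|\big((\K-x)\cap\xi^\perp\big)^{\circ}\big|_{n-1}\,dS_\B(\xi),
\]
an average, against the surface area measure $S_\B$, of the $(n-1)$-dimensional Santal\'o volume functional applied to the slices of $\K$.

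Convexity now follows slice by slice. For fixed $\xi$ and $v\in\xi^\perp$ the support function of the slice satisfies $h_{(\K-x)\cap\xi^\perp}(v)=H_\xi(\langle x,\xi\rangle,v)-\langle x,v\rangle$, where $H_\xi(s,v)=\sup\{\langle k,v\rangle:k\in\K,\ \langle k,\xi\rangle=s\}$. Convexity of $\K$ makes $s\mapsto H_\xi(s,v)$ concave, so $x\mapsto h_{(\K-x)\cap\xi^\perp}(v)$ is a concave and, since $x\in\inter{\K}$, strictly positive function of $x$. Expanding the inner Santal\'o volume as $\tfrac{1}{n-1}\int_{S^{n-2}}h_{(\K-x)\cap\xi^\perp}(v)^{-(n-1)}\,dv$ and using that $t\mapsto t^{-(n-1)}$ is convex and decreasing on $(0,\infty)$, each integrand is convex in $x$; integrating over $v$ and then over $\xi$ gives convexity of the whole functional. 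This mirrors Santal\'o's original argument, the novelty being that the dependence of the slices on the height $\langle x,\xi\rangle$ is absorbed into the concavity of $H_\xi$.

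For strict convexity I would examine the Hessian: each integrand contributes a rank-one form proportional to the outer product of $\nabla_x h_{(\K-x)\cap\xi^\perp}(v)$ with itself, whose component in $\xi^\perp$ is $v\otimes v$. The hypothesis that $\B$ is of class $C^1$ is equivalent to strict convexity of $\B^\ast$ and guarantees that $S_\B$ is not carried by any great subsphere, in fact that $\supp{S_\B}=S^{n-1}$; hence, as $\xi$ ranges over $\supp{S_\B}$ and $v$ over $\xi^\perp$, these rank-one forms span every direction and the integrated Hessian is positive definite. Properness is the easier endpoint: as $x\to x_0\in\partial\K$ the origin reaches the boundary of the slices for $\xi$ near the normal of $\K$ at $x_0$, so the corresponding inner Santal\'o volumes blow up, and since such $\xi$ carry positive $S_\B$-mass the functional tends to $+\infty$. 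Strict convexity together with properness and lower semicontinuity on the open convex set $\inter{\K}$ then yields a unique minimiser $\SS_\B(\K)$. I expect the main difficulty to lie not in the slicewise convexity, which is transparent, but in justifying the reduction rigorously for a general non-smooth body $\K$, namely legitimising the density formula, the Gauss-map push-forward and the section-projection identity at the level of surface area measures, and in converting the full-support property of $S_\B$ into the precise non-degeneracy required for strict convexity.
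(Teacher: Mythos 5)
Your reduction and your convexity step are essentially correct, and they land exactly on the paper's own starting point: the identity
\[
  \area_{\B^\ast}(\partial(\K-x)^\ast)=\frac{1}{\eps_{n-1}}\int_{\S^{n-1}}\bigl|\bigl((\K-x)\cap\xi^\perp\bigr)^{\circ}\bigr|_{n-1}\,dS_{\B}(\xi)
\]
(where $S_\B$ is the surface area measure of $\B$) is the paper's formula \eqref{eq:area_HT_int} after pushing the Hausdorff measure on $\partial\B$ forward to $S_\B$. Your detour through the symmetry $V_1((\K-x)^\ast,\Pi\B)=V_1(\B,\Pi(\K-x)^\ast)$, via Cauchy's projection formula, is in effect a classical re-proof of the Holmes--Thompson duality (Proposition \ref{prop:HT}), which the paper establishes symplectically; that is a legitimate variant. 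The slicewise convexity argument (concavity of the slice support function in $x$, composed with $t\mapsto t^{-(n-1)}$) coincides with Lemma \ref{lem:convexity}, and your Fatou-type blow-up argument for properness is a workable alternative to the paper's route through the isoperimetric inequality (Proposition \ref{prop:Isop}) and Proposition \ref{prop:paral}.

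The genuine gap is in the strict convexity step, the one place where the $C^1$ hypothesis must do its work, and you partly anticipated it. There are two problems. First, for an arbitrary convex body $\K$ (say a polytope) the function $s\mapsto H_\xi(s,v)$ in your notation is piecewise linear, so the integrands $x\mapsto h_{(\K-x)\cap\xi^\perp}(v)^{-(n-1)}$ are not twice differentiable, and ``examining the Hessian'' is not available; moreover, strict convexity of a nonsmooth convex function cannot be deduced from positivity of an almost-everywhere-defined Hessian, so even a successful a.e.\ Hessian bound would not finish the proof. Second, and more seriously, the spanning argument fails even at the formal level. Fix a direction $w$. The rank-one bound $\nabla^2_x\bigl(h^{-(n-1)}\bigr)\succeq n(n-1)\,h^{-(n+1)}\,\nabla_xh\otimes\nabla_xh$, with $\nabla_xh=(\partial_sH_\xi)\,\xi-v$, is guaranteed to be positive in the direction $w$ only when $\xi\perp w$ (then $\langle\nabla_xh,w\rangle=-\langle v,w\rangle$); for all other $\xi$ the contribution in direction $w$ may vanish identically -- this is precisely the cylindrical phenomenon encoded by $\cyl{\K}$ and illustrated in Remark \ref{rem:cex}. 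But $\{\xi:\xi\perp w\}$ is a great subsphere, which has $S_\B$-measure zero (for instance when $\B$ is the Euclidean ball). So ``the rank-one forms span every direction'' does not imply that the integrated quadratic form is positive: positivity of the integrand on a null set of $\xi$'s proves nothing about the integral.

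What is missing is an openness/continuity argument, and this is exactly how the paper closes the loop. Setting $f(\xi,x)=|((\K-x)\cap\xi^\perp)^\circ|_{n-1}$, the midpoint gap $g(\xi)=f(\xi,x_1)+f(\xi,x_2)-2f\bigl(\xi,\tfrac{x_1+x_2}{2}\bigr)$ is continuous in $\xi$ and is strictly positive for every $\xi\perp(x_2-x_1)$: for such $\xi$ the slice $(\K-x)\cap\xi^\perp$ is a \emph{fixed} $(n-1)$-dimensional body translated inside $\xi^\perp$ as $x$ runs over $[x_1,x_2]$, so Santal\'o's classical strict convexity applies verbatim in one dimension less. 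Hence $\{g>0\}$ is a nonempty open subset of the sphere, and the full-support property $\supp{S_\B}=\S^{n-1}$ (your correct consequence of $C^1$) forces $\int g\,dS_\B>0$, i.e.\ strict midpoint convexity for the pair $(x_1,x_2)$; this works with no differentiability of $\K$ whatsoever. This is the content of Proposition \ref{str-conv-ab} combined with the openness and nonemptiness of $\Strcvx[\K]{x_1,x_2}$. Without this continuity-plus-full-support mechanism, your final step does not go through.
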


In case the unit ball is no longer of class $C^1$, the above functional is still convex  (and proper)  on the interior of $\K$, but strict convexity is no longer guaranteed. Therefore,  Santal\'o points do exist but are no longer necessarily unique. In section \ref{sec:convexity}, our study goes far beyond the $C^1$-regularity, and we give precise necessary and sufficient conditions for the uniqueness. For example, when the unit ball $B$ is a polytope there is always a convex body $K$ with an infinite set of Santal\'o points, as illustrated in the concrete example \ref{rem:cex}. On the bright side, uniqueness is always ensured without any regularity assumption in the special case where unit ball and convex body coincide. Using the duality formula $\area_{\B^\ast}(\partial \K^\ast)=\area_{\K}(\partial \B)$ discovered by \cite{HT79}, we restate this result as follows.

\begin{theorem}\label{th:santaloK=B}
Let $(V,\|\cdot\|_{\B})$ be any finite-dimensional real normed vector space.
The functional 
\[
	x\in \inter{\D} \mapsto \area_{\D-x}(\partial (\D-x))
\]
admits a unique minimizing point $\SS_{\D}(\D)$.
\end{theorem}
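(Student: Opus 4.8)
The plan is to combine the Holmes--Thompson duality formula $\area_{B^\ast}(\partial K^\ast)=\area_K(\partial B)$ with the Crofton (projection) description of the Holmes--Thompson area, so as to turn the functional into an average of classical Santal\'o functionals over the slices of $B$. Fixing an auxiliary Euclidean structure on $V$ (the final expressions will not depend on it), recall that for a hypersurface $\Sigma$ in the normed space with unit ball $B$ one has
\[
  \area_{B}(\Sigma)=\frac{1}{\omega_{n-1}}\int_{\Sigma}\vol_{n-1}\big(\pi_{T_z\Sigma}(B^\ast)\big)\,d\hausdorff{n-1}(z),
\]
where $\omega_{n-1}$ is the volume of the Euclidean unit $(n-1)$-ball and $\pi_W$ is orthogonal projection onto $W$. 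Applying this with unit ball $B-x$ and $\Sigma=\partial(B-x)$, noting that the integrand depends on $z$ only through the tangent direction and is translation invariant, and pushing forward by the Gauss map of $\partial B$ (whose image measure is the surface area measure $S_B$ on $\S^{n-1}$), I would obtain, for $f(x):=\area_{B-x}(\partial(B-x))$,
\[
  f(x)=\frac{1}{\omega_{n-1}}\int_{\S^{n-1}}\vol_{n-1}\big(\pi_{u^\perp}((B-x)^\ast)\big)\,dS_B(u).
\]
Since $\pi_{u^\perp}(C^\ast)=(C\cap u^\perp)^{\circ_{u^\perp}}$, the inner quantity is exactly the Santal\'o polar volume of the central slice $(B-x)\cap u^\perp$, and expressing this volume through the support function yields the workhorse formula
\[
  f(x)=\frac{1}{(n-1)\,\omega_{n-1}}\int_{\S^{n-1}}\int_{\S^{n-1}\cap u^\perp}\psi_{u,\theta}(x)^{-(n-1)}\,d\theta\,dS_B(u),\qquad \psi_{u,\theta}(x)=h_{(B-x)\cap u^\perp}(\theta).
\]

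Convexity and properness are already available from the general discussion preceding the statement, but the above formula makes them transparent and, more importantly, sets up the uniqueness argument. A minimax in the definition of the support function of a slice shows that $x\mapsto\psi_{u,\theta}(x)$ is concave and positive on $\inter{B}$, so each integrand $\psi_{u,\theta}^{-(n-1)}$ is convex, being the composition of the convex nonincreasing $t\mapsto t^{-(n-1)}$ with a concave function; moreover $\psi_{u,\theta}\to0$ as $x$ approaches $\partial B$ for a nonnegligible set of $(u,\theta)$, giving properness. Hence $f$ attains its minimum and its minimizer set $M\subset\inter{B}$ is compact and convex, and the entire task is to prove that $M$ is a single point.

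For uniqueness I would argue by contradiction: if $M$ contained a segment with direction $e\neq 0$, then $f$ would be affine along it. Because $t\mapsto t^{-(n-1)}$ is strictly convex and each $\psi_{u,\theta}(\cdot)$ is concave, $\psi_{u,\theta}^{-(n-1)}$ is affine along the segment only where $\psi_{u,\theta}(\cdot)$ is \emph{constant} along it; since $f$ is the average of these convex integrands, affineness of $f$ forces $s\mapsto\psi_{u,\theta}(x_0+se)$ to be constant for $S_B(u)\,d\theta$-a.e.\ $(u,\theta)$. Differentiating and using the envelope/Lagrange condition at the point $y^\ast=y^\ast(u,\theta)$ of the slice $(B-x)\cap u^\perp$ that is extreme in the direction $\theta$, one finds that the outer normal $\nu_{y^\ast}$ of $B$ at $y^\ast$ lies in $\vect{u,\theta}$ and that
\[
  \partial_e\psi_{u,\theta}(x)=-\frac{\langle \nu_{y^\ast},e\rangle}{\langle \nu_{y^\ast},\theta\rangle}.
\]
Thus the degeneracy forces $\langle\nu_{y^\ast(u,\theta)},e\rangle=0$ for a.e.\ $(u,\theta)$. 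But as $(u,\theta)$ ranges over pairs with $u\perp(y-x)$ and $\theta$ the normalized $u^\perp$-component of $\nu_y$, the extreme point $y^\ast$ realizes any prescribed exposed boundary point $y$ of $B$; hence the degeneracy would force $\langle\nu_y,e\rangle=0$ for essentially every $y\in\partial B$. This is impossible, since the Gauss image of $\partial B$ is all of $\S^{n-1}$ and contains the normals at the $\langle\cdot,e\rangle$-extreme points of $B$, where $\langle\nu_y,e\rangle\neq 0$. The contradiction gives strict convexity of $f$ in every direction, and in particular uniqueness of the minimizer.

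The step I expect to be the main obstacle is this last one, made rigorous \emph{without any regularity} of $B$. One must handle the measurable map $(u,\theta)\mapsto\nu_{y^\ast(u,\theta)}$, deal with non-exposed points and with the set where $y^\ast$ is not unique, and—most delicately—cope with a possibly singular surface area measure $S_B$. The critical case is when $B$ is a polytope, where $S_B$ is a finite sum of atoms at the facet normals: there the contradiction must be extracted from the geometric fact that a hyperplane through an interior point $x$ necessarily meets facets of $B$ whose normals are not orthogonal to $e$, so that a positive mass of $(u,\theta)$ produces $\langle\nu_{y^\ast},e\rangle\neq 0$. Converting this picture into a clean measure-theoretic lower bound, uniformly over all directions $e$, is the crux of the proof.
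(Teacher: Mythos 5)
Your setup coincides with the paper's own reduction: the slice formula for $f$, the convexity of each integrand $\psi_{u,\theta}^{-(n-1)}$ via support functions, properness, and the reduction of uniqueness to showing that $f$ cannot be affine along a segment $[x_1,x_2]$ are exactly Lemma \ref{lem:convexity} and Proposition \ref{str-conv-ab} of the paper. The gap is in the final step, and it is genuine: you explicitly leave open the case where $\supp{S_{\B}}$ (the paper's $\mu_{\partial \B}$) is not all of $\S^{n-1}$, but that is precisely the case this theorem exists for --- when $\partial\B$ is $C^1$ the support is everything and the statement is already Theorem \ref{th:santalo}. Moreover, your proposed route does not survive in that case as stated. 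First, the sweep argument (realizing an arbitrary exposed point $y\in\partial\B$ as $y^\ast(u,\theta)$) requires choosing $u$ freely in $(y-x)^\perp$, whereas degeneracy is only known for $u\in\supp{S_{\B}}$, which for a polytope is a finite set of facet normals. Second, for a polytope and $n\ge 3$, for almost every $\theta$ the extreme point $y^\ast(u,\theta)$ is a \emph{vertex} of the polytopal slice $(\B-x)\cap u^\perp$, hence lies on a face of $\B$ of codimension at least two; there $\nu_{y^\ast}$ is not defined, the envelope formula only yields one-sided derivatives governed by a whole normal cone, and your intended contradiction (``the hyperplane meets facets with $\langle u_j,e\rangle\neq0$'') concerns facets of the slice, which are the extreme sets for only a \emph{null} set of directions $\theta$. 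So the positive-mass lower bound you need never materializes from this picture.

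The missing idea is to exploit the hypothesis $\K=\B$, which your argument never uses. In the paper, constancy of the slice support functions (your $\psi_{u,\theta}$) along $[x_1,x_2]$ is first converted (Lemma \ref{lemma:realcyl}) into a cylinder inside $\B$ itself: a hyperplane $H$ and a convex body $C\subset H$ with $(H+[x_1,x_2])\cap\B=C+[x_1,x_2]$. Because $\K=\B$, the lateral boundary of this cylinder, $(\partial C)+\,]x_1,x_2[$ (with $\partial C$ the relative boundary of $C$ in $H$), is a subset of $\partial\B$ of positive $\hausdorff{n-1}$-measure --- proved via a Jacobian estimate for the map $(y,x)\mapsto\varphi(y)+x$, where $\varphi$ is a $C^1$ parametrization of the regular points of $\partial C$ --- and every supporting hyperplane of $\B$ along it contains the direction $e=x_2-x_1$. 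Hence $\supp{\mu_{\partial\B}}$ contains a hyperplane $H'=G\oplus\vect{e}$, with $G$ in the support of the surface area measure of $C$. Since $e\in H'$, Santal\'o's original strict convexity theorem applies to $x\mapsto|(H'\cap(\B-x))^\circ|_{n-1}$ along the segment, i.e.\ $H'\in\Strcvx[\B]{x_1,x_2}$, contradicting affineness of $f$. In short, when $\K=\B$ the cylindrical degeneracy of $\K$ manufactures exactly the mass of $\mu_{\partial\B}$ on hyperplanes containing $e$ that is needed to destroy it; this self-referential mechanism is the content of the proof and is the step your proposal does not supply.
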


Rephrasing the theorem above, there exists a unique translation of the unit ball of a given normed vector space minimizing its own Holmes-Thompson boundary area.

In addition, by invariance properties of the Holmes-Thompson area, the map $\D\to \SS_{\D}(\D)$ is affinely equivariant, that is 
\[
\SS_{T\D}(T\D)=T(\SS_{\D}(\D))
\] 
for any invertible affine map $T$. Therefore this map defines a new affine-invariant point in the sense of \cite{Grun63}. See subsection \ref{sec:affineinvariantpoint} for more details.

In order to characterize $\SS_{\B}(\K)$, we study the first variation of the above functional. Assuming that the normed vector space $(V,\|\cdot\|_{\B})$ is {\it Minkowski}, i.e. $\partial\B$ is smooth and positively curved  (that is, with strictly positive sectional curvature), we obtain an interesting formula closely ressembling \eqref{eq:var_vol_dual} and involving some classical  notions from affine differential geometry.

Given $x\in\partial \B$, recall that the affine normal line is a canonical $1$-dimensional vector subspace $N_x\subset V$ transverse to $T_x=T_x\partial \B$, which behaves equivariantly under linear transformations of $\partial \B$. We thus have the decompositions
\[
  V=T_x\oplus N_x \qquad \text{and} \qquad V^*=N_x^\bot\oplus T_x^\bot.
\]
Let us consider the isomorphism $N_x^\bot\simeq T_x^*$ given by $\phi\mapsto\phi|_{T_x}$, and the projection $\pi_x\colon V^*\to N_x^\bot$ with kernel $\ker\pi_x=T_x^\bot$. Pick an arbitrary smooth measure $\mu$ on $\partial \B$ and consider the Lebesgue measure $\mu_x$ on $T_x$, and its dual measure $(\mu_x)^*$ on $T_x^*$. Let $\nu_x$ be the Lebesgue measure on $N_x^\bot\simeq T_x^*$ corresponding to $(\mu_x)^*$.

\begin{definition}
  Let $\K$ be a convex body in a Minkowski space $(V,\|\cdot\|_{\B})$. Using the previous notation, we define
  \[
   \CC_{\B}(\K^\ast)=\int_{\partial \B}\left(\int_{\pi_x (\K^\ast)} \, \, q \, d\nu_x(q)\right)\,  d\mu(x).
  \]
\end{definition}

Note that the inner integral is the centroid of the projection $\pi_x(\K^\ast)$ with respect to $\nu_x$ and belongs to $V^\ast$. It is easy to see that $\CC_{\B}(\K^\ast)$ does not depend on the choice of $\mu$.

Our third main result is the following.

\begin{theorem}\label{th:firstvariation}
  Let $\K$ be a convex body in a Minkowski space $(V,\|\cdot\|_{\B})$.
  Then for any $v \in V$ we have
   \begin{equation}\label{eq:firstvariation}
     \left.\frac{d}{dt}\right|_{t=0}\area_{\B^\ast}({\partial(\K-tv)^\ast})= \frac{n+1}{\eps_{n-1}}\left\langle  \CC_{\B}(\K^\ast),v\right\rangle.
  \end{equation}
  In particular $\SS_{\B}(\K)$ lies at the origin of $V$ if and only if $\CC_{\B}(\K^\ast)$ lies at the origin of $V^\ast$.
\end{theorem}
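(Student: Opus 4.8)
The plan is to reduce the statement to a first‑variation computation for the integral‑geometric (Crofton) formula for the Holmes–Thompson area, and to isolate the precise place where the affine normal produces the coefficient $n+1$.

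First I would record the area as an integral of shadow volumes. Combining the Crofton formula for $\area_{\B^\ast}$ established earlier with the polar–section duality (the restriction map $V^\ast\to T_x^\ast$ carries $\K^\ast$ onto the polar, taken inside $T_x$, of the central section $\K\cap T_x$, so that $\pi_x(\K^\ast)\cong(\K\cap T_x)^\circ$) one obtains
\[
  \area_{\B^\ast}(\partial\K^\ast)=\frac{1}{\eps_{n-1}}\int_{\partial\B}\vol_{\nu_x}\big(\pi_x(\K^\ast)\big)\,d\mu(x)=\frac{1}{\eps_{n-1}}\int_{\partial\B}\vol_{(\mu_x)^\ast}\big((\K\cap T_x)^\circ\big)\,d\mu(x).
\]
Because the volume of a shadow is independent of the chosen transversal, this formula needs no affine normal; the affine normal enters only through the \emph{first moment} $m_x:=\int_{\pi_x(\K^\ast)}q\,d\nu_x(q)\in N_x^\bot$, whose value as an element of $V^\ast$ genuinely depends on realizing $T_x^\ast$ as $N_x^\bot$.

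Next I would translate and differentiate under the integral. Let $\psi_x\in T_x^\bot$ be the support functional of $\B$ at $x$ and decompose $v=v'+v_N$ along $V=T_x\oplus N_x$; then $(\K-tv)\cap T_x$ is the section of $\K$ at $\psi_x$‑height $t\psi_x(v)$, brought into $T_x$ and translated by $-tv'$. Using a graph parametrization $z=f(y)$ of $\partial\B$ over $T_x$ with the affine normal as $z$‑axis and writing $v_z$ for the $z$‑component, the support function gives $g_x(t):=\vol_{(\mu_x)^\ast}\big(((\K-tv)\cap T_x)^\circ\big)=\tfrac1{n-1}\int_{S^{n-2}}\big(h_{\K_{tv_z}}(\theta)-t\langle v',\theta\rangle\big)^{-(n-1)}\,d\theta$. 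Differentiating at $t=0$ splits $g_x'(0)$ into a horizontal term
\[
  (I)=\int_{S^{n-2}}h_{\K_0}(\theta)^{-n}\langle v',\theta\rangle\,d\theta=n\langle m_x,v\rangle,
\]
where I use $\int_{C^\circ}w\,dw=\tfrac1n\int_{S^{n-2}}h_C^{-n}\theta\,d\theta$ in dimension $n-1$ together with $q(v)=\langle v',q\rangle$ for $q\in N_x^\bot$, and a vertical term $-v_z\,(II)$ with $(II)=\int_{S^{n-2}}h_{\K_0}(\theta)^{-n}\,\partial_ch_{\K_c}(\theta)|_{0}\,d\theta$ and $\partial_ch_{\K_c}(\theta)|_0=-\beta(\theta)$, the slope along the affine normal of the outer conormal $\xi(\theta)\in\partial\K^\ast$ of $\K$ at the section point. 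Setting $M_x:=\big(\int_{S^{n-2}}h_{\K_0}^{-n}\beta\,d\theta\big)\,dz\in T_x^\bot$ gives $-v_z(II)=\langle M_x,v\rangle$, and $n m_x+M_x=\int_{\partial(\K\cap T_x)}h_{\K_0}^{-n}\,\xi\,d\theta$.

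The crux is therefore the identity
\[
  \int_{\partial\B}M_x\,d\mu(x)=\int_{\partial\B}m_x\,d\mu(x)=\CC_{\B}(\K^\ast),
\]
which upgrades the coefficient $n$ of $(I)$ to the required $n+1$. I expect this to be the main obstacle, and it is exactly here that the \emph{defining} property of the affine normal must be used — equivalently the equiaffine (apolarity) normalization of the cubic form of $\partial\B$, or Blaschke's characterization by centroids of sections — together with an integration by parts over the closed hypersurface $\partial\B$. Concretely I would push both integrals to the incidence variety $\{(x,y):x\in\partial\B,\ y\in\partial\K,\ \psi_x(y)=0\}$, exchange the order of integration, and check that the equiaffine choice of $N_x$ redistributes the vertical moments $M_x$ into the horizontal moments $m_x$ after integration (note that each term separately depends on the transversal, while $F'(0)$ does not, so only the affine‑normal split yields the clean expression). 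Once this is established, $F'(0)=\tfrac1{\eps_{n-1}}\int_{\partial\B}(n m_x+M_x)\,d\mu=\tfrac{n+1}{\eps_{n-1}}\langle\CC_{\B}(\K^\ast),v\rangle$, and the final clause follows because, $\B$ being Minkowski hence $C^1$, Theorem~\ref{th:santalo} makes the functional strictly convex with unique critical point $\SS_{\B}(\K)$, so $\SS_{\B}(\K)=0$ iff $F'(0)=0$ for all $v$, i.e.\ iff $\CC_{\B}(\K^\ast)=0$.
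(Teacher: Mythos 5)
Your reduction is fine as far as it goes: the opening display is really formula \eqref{eq:area_HT} for $\area_{\K}(\partial\B)$ combined with the Holmes--Thompson duality of Proposition \ref{prop:HT} (rather than the Crofton formula of Proposition \ref{prop:AP}), and the splitting $g_x'(0)=n\langle m_x,v\rangle+\langle M_x,v\rangle$ is a plausible computation, modulo care about differentiating $c\mapsto h_{\K_c}(\theta)$ and defining the conormal $\xi(\theta)$ when $\K$ is not smooth (the theorem assumes no regularity on $\K$, so these are one-sided/a.e.\ derivatives that need justification). But the proof has a genuine gap at exactly the point you yourself flag as the crux: the identity
\[
\int_{\partial\B}M_x\,d\mu(x)=\int_{\partial\B}m_x\,d\mu(x)
\]
is nowhere proved. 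You state that you ``expect'' it and sketch a plan (incidence variety, exchange of integration order, apolarity of the cubic form), but no part of that plan is carried out, and nothing in your text converts the defining property of the affine normal into this integral identity. This is not a removable technicality: given your computation of $g_x'(0)$, the displayed identity is \emph{equivalent} to the theorem, since without it you only obtain $F'(0)=\frac{1}{\eps_{n-1}}\bigl\langle\int_{\partial\B}(n\,m_x+M_x)\,d\mu,\,v\bigr\rangle$, and both the coefficient $n+1$ and the fact that the answer is $\CC_{\B}(\K^\ast)$ (rather than some other transversal-dependent moment) hinge on it. So the mathematical heart of the statement is missing.

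For contrast, the paper never differentiates the sections of $\K$ at all, which is how it avoids your obstacle: Lemma \ref{lem:lines_translation} trades the translation of $\K$ for a flow $T_{v}$ on the space of lines, so the domain $\{L\colon L\cap\K^\ast\neq\emptyset\}$ stays fixed and what gets differentiated is the pulled-back Crofton measure, written in frame coordinates as $G^\ast\eta_{\B^\ast}=\frac{(n-1)!}{L}\,\omega_1\wedge\cdots\wedge\omega_{n-1}$ (Proposition \ref{prop:pullback}) with $L=\det(\Xi,\xi_1,\ldots,\xi_{n-1})^{n+1}$ (Proposition \ref{prop:L_to_det}). There the factor $n+1$ comes from differentiating this $(n+1)$-st power, and the key property $\nabla_v\Xi\in T_x\partial\B$ (item i) of Theorem \ref{thm:equiaffine}) is precisely what kills the derivative of $\Xi$ inside the determinant (Proposition \ref{prop:lie_derivative}); the centroid of $\pi_x(\K^\ast)$ then emerges from the fiberwise $\lambda$-integration. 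To rescue your route you would need to supply a complete proof of the moment identity above --- actually performing the integration by parts on $\partial\B$ and exhibiting where the equiaffine normalization enters --- and also justify the differentiation under the integral sign for non-smooth $\K$; as written, the argument stops strictly short of the statement.
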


Here $\eps_{n-1}$ denotes the Euclidean volume of an $(n-1)$-dimensional unit Euclidean ball and $\langle\cdot,\cdot\rangle$  the canonical duality pairing. Note the similarity with formula (\ref{eq:var_vol_dual}).

In the particular case of a convex body in an Euclidean space, both theorems sum up into the following.

\begin{corollary}
  Given a convex body $\K$ in $\R^n$, there exists a unique point $\SS(\K) \in \inter{\K}$ minimizing the functional
  \[
  x \in \inter{\K} \mapsto \hausdorff{n-1}(\partial(\K-x)^\circ).
  \]
  Furthermore, this point lies at the origin if and only if the following average of centroids of orthogonal projections of its polar body
  \[
    \CS(\K^\circ)=\int_{\S^{n-1}}\left(\int_{\pi_{x^\perp} (\K^\circ)} \, \, y \,  d\lambda(y)\right) \, d\sigma(x)
  \]
  lies at the origin.
\end{corollary}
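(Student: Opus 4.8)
The plan is to read the corollary off from Theorems~\ref{th:santalo} and~\ref{th:firstvariation} by specializing to Euclidean $\R^n$ with $\B$ the Euclidean unit ball. Under the canonical identification $V^\ast\simeq V$ given by the inner product, the dual body $\K^\ast$ is the polar body $\K^\circ$, and, as recalled in the introduction, $\area_{\B^\ast}(\partial\K^\ast)=\hausdorff{n-1}(\partial\K^\circ)$. Since the Euclidean sphere is of class $C^1$, Theorem~\ref{th:santalo} directly gives that $x\mapsto\hausdorff{n-1}(\partial(\K-x)^\circ)$ is strictly convex and proper, hence possesses a unique minimizer $\SS(\K):=\SS_\B(\K)\in\inter{\K}$; this disposes of existence and uniqueness.

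For the characterization I would first note that the Euclidean ball is Minkowski, its boundary $\S^{n-1}$ being smooth with constant positive curvature, so that Theorem~\ref{th:firstvariation} applies: $\SS(\K)$ lies at the origin exactly when $\CC_\B(\K^\ast)$ does. The task then reduces to identifying $\CC_\B(\K^\ast)$ with $\CS(\K^\circ)$ under $V^\ast\simeq V$. The key geometric input is the classical fact that the affine normals of a sphere centered at the origin are radial, whence $N_x=\R x=\vect x$ and $T_x=x^\perp$ for every $x\in\S^{n-1}$. Dualizing, $N_x^\perp\simeq x^\perp$ and $T_x^\perp\simeq\R x$, and the projection $\pi_x\colon V^\ast\to N_x^\perp$ with kernel $T_x^\perp$ becomes precisely the orthogonal projection $\pi_{x^\perp}$; in particular $\pi_x(\K^\ast)$ is identified with $\pi_{x^\perp}(\K^\circ)$.

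It then remains to match the auxiliary measures. Since the excerpt already records that $\CC_\B(\K^\ast)$ is independent of the choice of $\mu$, I would take $\mu=\sigma$ to be the standard surface measure on $\S^{n-1}$. Then the induced Lebesgue measure $\mu_x$ on $T_x=x^\perp$ is the Euclidean one, and, Euclidean structures being self-dual, both $(\mu_x)^\ast$ and the resulting measure $\nu_x$ on $N_x^\perp\simeq x^\perp$ are again the Euclidean Lebesgue measure $\lambda$. Substituting these identifications into the definition of $\CC_\B$ turns the inner integral into $\int_{\pi_{x^\perp}(\K^\circ)}y\,d\lambda(y)$ and the outer one into integration against $d\sigma(x)$ over $\S^{n-1}$, yielding exactly $\CS(\K^\circ)$. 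Combined with the nonvanishing of the factor $\tfrac{n+1}{\eps_{n-1}}$ in~\eqref{eq:firstvariation}, this shows that $\SS(\K)$ sits at the origin if and only if $\CS(\K^\circ)$ does.

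I expect the only genuine subtlety to be the affine-geometric bookkeeping of the last two paragraphs — pinning down the affine normal of the sphere and tracking how $\mu_x$, $(\mu_x)^\ast$ and $\nu_x$ behave under the self-duality of Euclidean space — while the reduction itself is purely formal once Theorems~\ref{th:santalo} and~\ref{th:firstvariation} are available.
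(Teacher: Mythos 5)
Your proposal is correct and follows exactly the route the paper intends: the paper offers no separate proof, stating only that "both theorems sum up" into the corollary, i.e.\ one specializes Theorem~\ref{th:santalo} (the Euclidean ball is $C^1$) for existence and uniqueness, and Theorem~\ref{th:firstvariation} (the ball is Minkowski) for the characterization. Your additional bookkeeping — the radial affine normal of the sphere, $\pi_x=\pi_{x^\perp}$, and the self-duality of the Euclidean measures giving $\nu_x=\lambda$ with $\mu=\sigma$ — is precisely the implicit identification $\CC_{\B}(\K^\ast)=\CS(\K^\circ)$ that the paper leaves to the reader.
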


Here $\pi_{x^\perp}$ denotes the orthogonal projection onto the hyperplane  orthogonal  to $x$, and $\sigma,\lambda$ denote respectively the canonical measure on $\S^{n-1}$ and the Lebesgue measure on $x^\perp$  induced by the standard Euclidean structure.

This paper is organized as follows. In section \ref{sec:rappels} we collect some material on the Holmes-Thompson volume, and present a Crofton formula for the Holmes-Thompson area of a hypersurface in a Minkowski space due to \cite{Alv98}. This formula will be decisive to show (\ref{eq:firstvariation}). In section \ref{sec:convexity} we prove Theorems \ref{th:santalo} and \ref{th:santaloK=B}. The next three sections are devoted to the proof of Theorem \ref{th:firstvariation}. The strategy consists of using the above mentioned Crofton formula which describes the Holmes-Thompson area of $ \partial \K^*$ as an integral on the set of lines intersecting $\K^\ast$. In order to compute its first variation under translations, we first use the boundary sphere $\partial \B$ to construct a parametrization space for the oriented affine lines of the dual vector space in terms of intersections of affine hyperplanes. We then show how to rewrite the measure involved in \'Alvarez Paiva's formula in this new space. The effect of translations is then simple to describe which  allows us to compute the first variation of the area.  So after shortly recalling in section \ref{sec:equiaffine} some classical notions of equiaffine differential geometry, we define the parametrization space in section \ref{sec:pullback} and show how to rewrite the measure of lines in this new setup. In the last section \ref{sec:firstvariation} we compute the first variation of the Holmes-Thompson area and obtain formula (\ref{eq:firstvariation}).

\noindent {\bf Acknowledgements.} We would like to thank the referee for valuable comments.\\

\section{Background on the Holmes-Thompson notions of volume and area}\label{sec:rappels}

In this section we collect some material on the Holmes-Thompson notions of volume and area. The two major tools we will use in the sequel are a Crofton type formula due to \'Alvarez Paiva and presented in Proposition \ref{prop:AP}, and a duality formula due to Holmes \& Thompson stated in Proposition \ref{prop:HT}.

\subsection{Holmes-Thompson volume on continuous Finsler manifolds}

Let $M$ be a smooth manifold $M$ of dimension $k$ endowed with a continuous Finsler metric $F:TM \to [0,\infty)$, that is a continuous function which is positively homogenous of degree $1$ outside the zero section and such that for all $x\in M$ the subsets of $T_x M$
\[
  D_x(M,F) \coloneqq \{v \in T_x M \mid F(x,v)\leq 1\}
\]
are convex bodies containing the origin in their interior (or equivalently, $F$ is a continuous function whose restriction to each tangent space $T_xM$ is a norm, possibly asymmetric). For any measurable set $A\subset M$ denote by $D^\ast(A,F)$ the unit co-disc bundle over $A$ consisting of the disjoint union of the dual bodies $D_x(M,F)^\ast \subset (T_xM)^\ast$ for $x \in A$. The {\it Holmes-Thompson volume} of the measurable set $A$ in $(M,F)$ is then defined as the following integral:
\[
  \vol(A,F)={\frac{1}{k!\varepsilon_{k}}  \,}\int_{D^\ast(A,F)} \omega_M^{k}
\]
where $\omega_M$ denotes the standard symplectic form on the cotangent bundle $T^\ast M$. If $\rho$ denotes any non-vanishing $k$-density on $M$, then
\[
  \vol(A,F)=\int_A \frac{|D_x(M,F)^\ast|^\ast_{\rho_x}}{\varepsilon_k} \, \rho_x
\]
where $|\cdot|^\ast_{\rho_x}$ is the dual Lebesgue measure on $T_x^\ast M$ of the Lebesgue measure $|\cdot|_{\rho_x}$ on $T_xM$ associated to $\rho$. More precisely, $|\cdot|_{\rho_x}$ is the Lebesgue measure on $T_x M$ normalized to give volume $1$ on a parallelotope spanned by a base $v_1,\ldots,v_k$ of $T_x M$ for which $\rho_x(v_1,\ldots,v_k)=1$, and $|\cdot|^\ast_{\rho_x}$ is the Lebesgue measure on $T_x^\ast M$ normalized to give volume $1$ for the parallelotope spanned by the covectors $\xi_1,\ldots,\xi_k \in T_x^\ast M$ forming a base which is dual to $(v_1,\ldots,v_k)$.

\subsection{Holmes-Thompson volume and area in normed spaces}

Let ${\D}$ be a convex body in an $n$-dimensional real vector space $V$ that contains the origin as an interior point.  The associated norm $\|\cdot\|_{\D}$ defines a continuous Finsler metric on the manifold $V$ whose Holmes-Thompson volume satisfies for any compact set $A$ the equality
\[
  \vol(A,\|\cdot\|_{\D})=\frac{|A| \, |{\D}^\ast|^\ast}{\eps_n}
\]
where $|\cdot|$ and $|\cdot|^\ast$ are respectively the measure and its dual measure associated to some translation-invariant density on $V$. We will denote shortly $\vol(A,\|\cdot\|_{\D})$ by $\vol_{\D}(A)$.

In the case where $V=\R^n$  we find that $\vol_{\D}(A)=|A| \, |{\D}^\circ|/\eps_n$. In particular the Finsler volume $\vol_{\D}(\D)$ of the convex body ${\D}$ itself coincides with its volume product up to the constant $1/\eps_n$.

It turns out that the associated norm $\|\cdot\|_{\D}$ also permits to define for any hypersurface $M\subset V$ (and, more generally, for any smooth submanifold of $V$) a continuous Finsler metric $F_{\D}:TM\to [0,\infty)$ by simply setting $F_{\D} (x,v)=\|v\|_{\D}.$ We will denote shortly $\vol(M,F_{\D})$ by $\area_{\D}(M)$ and speak of the {\it Holmes-Thompson area} of $M$. This notion of area extends to boundaries of convex bodies. For this recall that  the set of singular boundary points (those for which the supporting hyperplane is not uniquely defined) of a convex body ${\C}$ in $V$ is of $(n-1)$-dimensional Hausdorff measure zero, see \cite{Reid21}. Therefore the formula
\begin{equation}\label{eq:area_HT}
  \area_{\D}(M)=\int_{M} \frac{|(T_xM \cap {\D})^\ast|^\ast_\rho}{\varepsilon_{n-1}}\, \rho(x)
\end{equation}
where $\rho$ denotes any translation-invariant $(n-1)$-density on $V$ makes sense when $M=\partial {\C}$ is the boundary of a convex body ${\C}$. Moreover, if ${\C}_n\to {\C}$ is a sequence of smooth convex bodies that converges in the Hausdorff metric to a convex body ${\C}$, then $\area_{\D}(\partial {\C}_n) \to \area_{\D}(\partial {\C})$ (see Remark \ref{rem:cvHT}).

\subsection{Crofton formula for hypersurfaces in Minkowski spaces}\label{sec:AlvarezFormula}

In case the normed space ${(V,\|\cdot\|_{\D})}$ has smoothness and strict convexity properties, the Holmes-Thompson area of hypersurfaces admits a Crofton formula due to Alvarez-Paiva (cf. \cite{Alv98}) that we present next. We refer to \cite{Sch06} for a very complete survey on the integral geometry of normed spaces.

Let ${\D}$ be a convex body of $V$ that contains the origin as an interior point, and suppose $\partial \D$ is smooth and positively curved. Then the associated normed space $(V,\|\cdot\|_{\D})$ is said to be {\it Minkowski}. In that case its dual body also has smooth and positively curved boundary, and the dual Legendre transform
\[
  {\LL}^\ast : \partial {\D}^\ast \to \partial {\D}
\]
is a well defined diffeomorphism. Recall that ${\LL}^\ast(p)$ is defined as the unique $x \in \partial {\D}$ such that $p(x)=1$. Denote by $G_+(V)$ the space of oriented affine lines in $V$ whose elements are denoted by $x+\langle v \rangle_+$ where $x$ is a point in $V$ and  $\langle v \rangle_+$ is the $1$-dimensional vector subspace of $V$ spanned and oriented by a vector $v$. In particular according to this notation $x+\langle v \rangle_+=y+\langle w \rangle_+$ if and only if there exists $(t,s)\in \R\times \R_{>0}$ such that $y= x+t v$ and $w=sv$. Now consider the projection map
\begin{align*}
  \pi : V\times \partial {\D}^\ast &                \to G_+(V) \\
               (x,p)               & \mapsto x+\langle{\LL}^\ast(p)\rangle_+.
\end{align*}
Recall that $\omega_V$ denotes the standard symplectic form of $T^\ast V$ and denote by $i : V \times \partial {\D}^\ast \hookrightarrow T^\ast V$ the canonical inclusion. It is well known that there exists a unique symplectic form $\omega_{\D}$ on $G_+(V)$ such that $\pi^\ast \omega_{\D}=i^\ast \omega_V$, see \cite{AG90,Besse}. Here is the Crofton formula associated to this symplectic form.

\begin{proposition}[\'Alvarez Paiva]\label{prop:AP}
The Holmes-Thompson area of a compact immersed hypersurface $M$ in a Minkowski space $(V,\|\cdot\|_{\D})$ satisfies the following formula:
\[
  \area_{\D}(M)=\frac{1}{2 \, (n-1)! \, \eps_{n-1}} \int_{G_+(V)} \# (L\cap M)\,  \omega_{\D}^{n-1}.
\]
\end{proposition}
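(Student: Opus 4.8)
The plan is to pass from the symplectic definition of the Holmes--Thompson area to the integral over lines by a double--fibration (degree) argument built on the two maps $\pi$ and $i$. I would start from the defining formula, which for the hypersurface $M$ (the case $k=n-1$) reads
\[
  \area_{\D}(M)=\frac{1}{(n-1)!\,\eps_{n-1}}\int_{D^\ast(M,F_{\D})}\omega_M^{n-1},
\]
so that it suffices to relate the symplectic volume of the unit co-disc bundle to the right-hand side of the asserted formula. The geometric input is that $V\times\partial\D^\ast$, included via $i$ in $T^\ast V$, is exactly the unit co-sphere bundle of $(V,\|\cdot\|_{\D})$, and that its characteristic $2$-form $i^\ast\omega_V$ has a one-dimensional kernel, namely the direction of the straight-line flow; this is the foliation along whose leaves $\pi$ projects, which is why the form $\omega_{\D}$ with $\pi^\ast\omega_{\D}=i^\ast\omega_V$ exists.

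Next I would restrict $\pi$ to the $(2n-2)$-dimensional submanifold $M\times\partial\D^\ast$ and analyse $\Phi:=\pi|_{M\times\partial\D^\ast}\colon M\times\partial\D^\ast\to G_+(V)$. By Sard's theorem almost every oriented line $L$ is transverse to $M$, and for such $L$ the fibre $\Phi^{-1}(L)$ consists precisely of the pairs $(y,p)$ with $y\in L\cap M$ and $p$ the unique covector whose oriented direction $\langle\LL^\ast(p)\rangle_+$ is that of $L$; hence $\Phi$ is there a local diffeomorphism with unsigned fibre cardinality $\#(L\cap M)$. Working with densities to avoid sign cancellation, the change-of-variables formula together with $\Phi^\ast\omega_{\D}=(i^\ast\omega_V)|_{M\times\partial\D^\ast}$ yields
\[
  \int_{G_+(V)}\#(L\cap M)\,\bigl|\omega_{\D}^{n-1}\bigr|=\int_{M\times\partial\D^\ast}\bigl|(i^\ast\omega_V)^{n-1}\bigr|.
\]

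It then remains to evaluate this density fibrewise over $M$. At a point $(x,p)$ the form $i^\ast\omega_V$ restricts to the pairing $(u,\xi)\mapsto\langle\xi,u\rangle$ between $T_xM$ and $T_p\partial\D^\ast$, so $(i^\ast\omega_V)^{n-1}$ equals $(n-1)!$ times the determinant of that pairing. I would identify this determinant with the Jacobian of the restriction map $r_x\colon\partial\D^\ast\to(T_xM)^\ast$, $p\mapsto p|_{T_xM}$, whose image is the dual body $(T_xM\cap\D)^\ast$ of the induced norm and which is generically two-to-one (the two ``hemispheres'' of $\partial\D^\ast$ over $T_xM$). The area formula for $r_x$ then gives, fibrewise, the value $2\,(n-1)!\,|(T_xM\cap\D)^\ast|^\ast$, and integrating over $M$ against the chosen density recovers through \eqref{eq:area_HT} exactly $2\,(n-1)!\,\eps_{n-1}\,\area_{\D}(M)$. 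Combining the two displayed identities gives the claimed formula.

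The step I expect to be the main obstacle is the rigorous justification of the degree/change-of-variables argument: controlling the measure-zero set of lines tangent to $M$ (or contained in a tangent direction), where $\Phi$ degenerates, and checking that replacing forms by densities $|\cdot|$ legitimately converts the signed degree of $\Phi$ into the honest count $\#(L\cap M)$. By comparison, the fibrewise computation is essentially the linear-algebra fact that the top power of the canonical pairing is its determinant, combined with the two-to-one covering $r_x$ that is responsible for the factor $2$; the only delicate point there is to keep track of all the dual-measure normalisations so that the final constant agrees with \eqref{eq:area_HT}.
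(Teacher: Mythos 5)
Your proposal is correct and follows essentially the same route as the paper: both arguments live on $M\times\partial\D^\ast$, obtain the count $\#(L\cap M)$ from the coarea formula applied to $\pi$ (using $i^\ast\omega_V=\pi^\ast\omega_{\D}$), and trace the factor $2$ to the generically two-to-one restriction map $\partial\D^\ast\to(T_xM)^\ast$ whose image is $(T_xM\cap\D)^\ast$. The only difference is presentational: where you compute $|(i^\ast\omega_V)^{n-1}|$ fibrewise as $(n-1)!$ times the Jacobian of $r_x$ and invoke the area formula over each fibre, the paper packages exactly this content as the identity $P_M^\ast\omega_M=i^\ast\omega_V$ (both being exterior derivatives of tautological one-forms) followed by a second application of the coarea formula to the two-to-one map $P_M\colon M\times\partial\D^\ast\to D^\ast(M,F_{\D})$.
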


Applying this formula to the boundary of a smooth convex ${\C}$ in $V$ we obtain the following formula:
\[
  \area_{\D}(\partial {\C})=\frac{1}{(n-1)! \, \eps_{n-1}} \int_{\{L\colon L\cap {\C}\neq \emptyset\}}  \omega_{\D}^{n-1}.
\]
By continuity we see that this formula still holds when the convex body ${\C}$ is not necessarily smooth.

Proposition \ref{prop:AP} was first stated in  \cite{Alv98}  (see \cite[Theorem 3.1]{AF98}) more generally for immersed hypersurfaces in reversible Finsler manifolds whose space of oriented geodesics is a smooth manifold. We include the proof here for the reader's convenience as well to check that it is still true in our context without the reversibility assumption.

\begin{proof}
First note that if $H\subset V$ is a linear subspace, then $({\D}\cap H)^\ast=r_{H^\ast}({\D}^\ast)$ where $r_{H^\ast} : V^\ast \to H^\ast$ denotes the restriction morphism defined by $r_{H^\ast}(p)=p_{|H}$. Thus
\begin{align*}
  D^\ast(M,F_{\D})
    & =\{(x,p)\in T^\ast M \mid x \in M \, \text{and} \, \, p \in ({\D}\cap T_xM)^\ast\}                 \\
    & =\{(x,p)\in T^\ast M \mid x \in M \, \text{and} \, \, p \in r_{(T_xM)^\ast} ({\D}^\ast)\}          \\
    & =\{(x,p)\in T^\ast M \mid x \in M \, \text{and} \, \, p \in r_{(T_xM)^\ast} (\partial {\D}^\ast)\} \\
    & =P_M(M\times \partial {\D}^\ast)
\end{align*}
where
\begin{align*}
  P_M : M \times \partial {\D}^\ast &   \to D^\ast(M,F_{\D}) \\
                              (x,p) & \mapsto (x,p_{|T_xM}).
\end{align*}
Therefore, by the coarea formula
\[
  (n-1)! \, \eps_{n-1} \, \area_{\D}(M)= \int_{D^\ast(M,F_{\D})} \omega_M^{n-1}={\frac{1}{2}} \, \int_{M\times \partial {\D}^\ast} |P^\ast_M \omega_M^{n-1}|.
\]
Here $|\eta|$ denotes the density given by the absolute value of a top differential form $\eta$.
Now observe that $P^\ast_M \omega_M=i^\ast\omega_V$ (as both are the exterior derivative of the tautological one-form) which implies, using the identity $i^\ast\omega_V=\pi^\ast\omega_{\D}$ and the coarea formula again, that
\[
  2 \, (n-1)! \, \eps_{n-1} \,\area_{\D}(M)=\int_{M\times \partial {\D}^\ast} |\pi^\ast\omega_{\D}^{n-1}|= \int_{G_+(V)} \# (L\cap M)\omega_{\D}^{n-1}.
\]
\end{proof}

Observe that we obtain by the way the following formula for Holmes-Thompson area:
\begin{equation}\label{eq:symplecticHT}
  \area_{\D}(\partial {\C})=\frac{1}{2\, (n-1)! \, \eps_{n-1}} \int_{\partial {\C}\times \partial {\D}^\ast} |i^\ast\omega_V^{n-1}|.
\end{equation}

\begin{remark}\label{rem:cvHT}
We easily check  from the above formula that if ${\C}_n, {\D_n}$ are sequences of convex bodies that converge in the Hausdorff metric to ${\C,\D}$ respectively, then $\area_{\D_n}(\partial {\C}_n) \to \area_{\D}(\partial {\C})$.
\end{remark}

\subsection{Holmes-Thompson duality formula}

To conclude this section let us recall the following duality principle for the Holmes-Thompson area of convex bodies.

\begin{proposition}[Holmes \& Thompson]\label{prop:HT}
Let ${\C}$ and ${\D}$ be two convex bodies in $V$ that contain the origin in their interior. Then
\[
  \area_{{\D}^\ast}(\partial {\C}^\ast)=\area_{\C}(\partial {\D}).
\]
\end{proposition}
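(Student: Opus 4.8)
The plan is to deduce the identity from the symplectic description of the Holmes-Thompson area recorded in \eqref{eq:symplecticHT}, by exploiting the canonical identification of $T^\ast V^\ast$ with $T^\ast V$ together with biduality. The first step is a reduction to the \emph{Minkowski} case, i.e. to the situation where both $\C$ and $\D$ are smooth with positively curved boundary. Choose sequences of Minkowski bodies $\C_n\to\C$ and $\D_n\to\D$ in the Hausdorff metric, all containing the origin in their interior. Since polarity is continuous for bodies with the origin in the interior, we also have $\C_n^\ast\to\C^\ast$ and $\D_n^\ast\to\D^\ast$. By Remark \ref{rem:cvHT} applied to the unit ball and the body simultaneously, $\area_{\D_n^\ast}(\partial\C_n^\ast)\to\area_{\D^\ast}(\partial\C^\ast)$ and $\area_{\C_n}(\partial\D_n)\to\area_{\C}(\partial\D)$. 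Hence it suffices to establish the equality when $\C$ and $\D$ are both Minkowski, in which case all the boundary hypersurfaces are smooth and \eqref{eq:symplecticHT} applies verbatim in $V$ and in $V^\ast$.

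Next I would write both sides of the claimed equality using \eqref{eq:symplecticHT}. Reading \eqref{eq:symplecticHT} with $\C$ as unit ball and $\D$ as the convex body gives
\[
  \area_{\C}(\partial\D)=\frac{1}{2\,(n-1)!\,\eps_{n-1}}\int_{\partial\D\times\partial\C^\ast}|i^\ast\omega_V^{n-1}|,
\]
an integral over $\partial\D\times\partial\C^\ast\subset V\times V^\ast=T^\ast V$. Applying \eqref{eq:symplecticHT} instead in the dual space $V^\ast$, with $\D^\ast$ as unit ball and $\C^\ast$ as the convex body, and using biduality $(\D^\ast)^\ast=\D$ under $V^{\ast\ast}\simeq V$, gives
\[
  \area_{\D^\ast}(\partial\C^\ast)=\frac{1}{2\,(n-1)!\,\eps_{n-1}}\int_{\partial\C^\ast\times\partial\D}|i^\ast\omega_{V^\ast}^{n-1}|,
\]
an integral over $\partial\C^\ast\times\partial\D\subset V^\ast\times V=T^\ast V^\ast$.

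Finally I would match the two integrals through the swap map $\sigma\colon T^\ast V=V\times V^\ast\to V^\ast\times V=T^\ast V^\ast$, $(x,p)\mapsto(p,x)$. Writing the two tautological one-forms in dual linear coordinates, one checks directly that $\sigma^\ast\omega_{V^\ast}=-\omega_V$, whence $\sigma^\ast\omega_{V^\ast}^{n-1}=(-1)^{n-1}\omega_V^{n-1}$ and therefore $|\sigma^\ast\omega_{V^\ast}^{n-1}|=|\omega_V^{n-1}|$. Since $\sigma$ restricts to a diffeomorphism $\partial\D\times\partial\C^\ast\to\partial\C^\ast\times\partial\D$ compatible with the inclusions into the two cotangent bundles, the change of variables formula yields
\[
  \int_{\partial\C^\ast\times\partial\D}|i^\ast\omega_{V^\ast}^{n-1}|=\int_{\partial\D\times\partial\C^\ast}|i^\ast\omega_V^{n-1}|,
\]
which is precisely the equality of the two area expressions above. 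The sign bookkeeping in $\sigma^\ast\omega_{V^\ast}=-\omega_V$ is harmless here because the integrands are the densities $|{\cdot}|$; the genuinely delicate point is rather the reduction step, where one must guarantee that the Minkowski approximations retain the origin in their interiors and invoke the continuity of polarity so that Remark \ref{rem:cvHT} can be applied on both the unit balls and the bodies at once.
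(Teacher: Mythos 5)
Your proof is correct and takes essentially the same approach as the paper: both sides are expressed via \eqref{eq:symplecticHT}, and the two integrals are matched through the swap $V\times V^\ast\simeq V^\ast\times V$ combined with biduality $(\D^\ast)^\ast=\D$, the sign in $\sigma^\ast\omega_{V^\ast}=-\omega_V$ being invisible at the level of densities. The only difference is that you spell out two points the paper leaves implicit, namely the approximation by Minkowski bodies (to justify applying \eqref{eq:symplecticHT} to non-smooth bodies, via Remark \ref{rem:cvHT} and continuity of polarity) and the explicit computation of the swap's effect on the symplectic forms.
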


\begin{proof}
This was first observed by Holmes and Thompson in \cite{HT79}. Here is a quick argument:
\begin{align*}
  \area_{\C}(\partial {\D})
    & = \frac{1}{2\, (n-1)! \, \eps_{n-1}} \int_{\partial {\D}\times \partial {\C}^\ast} |\omega_V^{n-1}|                     \\
    & = \frac{1}{2\, (n-1)! \, \eps_{n-1}} \int_{\partial {\C}^\ast \times \partial ({\D}^\ast)^\ast} |\omega_{V^\ast}^{n-1}| \\
    & =  \area_{{\D}^\ast}(\partial {\C}^\ast),
\end{align*}
where we have abusively denoted $i^\ast \omega_V$ by $\omega_V$.
\end{proof}

\section{Strict convexity and properness}\label{sec:convexity}

In this section we prove both Theorem \ref{th:santalo} and Theorem \ref{th:santaloK=B} which we merge in a same statement as follows using Proposition \ref{prop:HT}.

\begin{theorem} \label{teo:convex_proper}
  Let $\B$ and $\K$ be two convex bodies of a finite-dimensional real vector space $V$. Suppose that 
 either (i) $\partial \B$ is of class $C^1$,  or (ii) $K=B$.
  The functional
  \[
    x\mapsto \area_{\K-x}(\partial \B)
  \]
  is strictly convex and proper on the interior of $\K$.

  In particular there exists a unique minimizing point $\SS_{\B}(\K) \in \inter{\K}$ for the Holmes-Thompson area of the boundary sphere of $(\K-x)^\ast$ in $(V^\ast,\|\cdot\|_{\B^\ast})$.
\end{theorem}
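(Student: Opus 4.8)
The plan is to run everything through the pointwise integral representation \eqref{eq:area_HT}, analysing the integrand as a function of the translation parameter. Throughout set $f(x)=\area_{\K-x}(\partial\B)$, write $H_y=T_y\partial\B$ for the tangent hyperplane (a linear subspace of $V$) with unit conormal $\xi_y$, and let $S_{x,y}=(\K-x)\cap H_y$ be the central section, which contains the origin in its relative interior for $x\in\inter\K$.

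\textbf{Convexity (both cases, no regularity).} After fixing an auxiliary Euclidean structure, \eqref{eq:area_HT} identifies the integrand at $y$ with the dual volume of the polar of $S_{x,y}$, and the classical polar–volume formula gives
\[
  |S_{x,y}^{*}|^{*}=\frac{1}{n-1}\int_{S(H_y)} h_{S_{x,y}}(u)^{-(n-1)}\,d\sigma(u),
\]
where $S(H_y)$ is the unit sphere of $H_y$. The key identity is the support function of a section, obtained by Lagrangian duality for the linear constraint $\xi_y=0$:
\[
  h_{S_{x,y}}(u)=\inf_{\lambda\in\R}\bigl(h_{\K}(u-\lambda\xi_y)-\langle u-\lambda\xi_y,x\rangle\bigr).
\]
This exhibits $x\mapsto h_{S_{x,y}}(u)$ as an infimum of affine functions, hence concave, and it is strictly positive on $\inter\K$ since $0\in\inter(\K-x)$. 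As $s\mapsto s^{-(n-1)}$ is convex and decreasing on $(0,\infty)$, each $x\mapsto h_{S_{x,y}}(u)^{-(n-1)}$ is convex; integrating in $u$ and then in $y$ shows $f$ is convex. No regularity is used, so this covers (i) and (ii).

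\textbf{Properness.} As $x\to x_0\in\partial\K$ the origin tends to $\partial(\K-x)$, so there is a supporting conormal $\eta$ with $h_{\K}(\eta)-\langle\eta,x\rangle\to0^{+}$. For almost every $y$ one can choose $\lambda$ so that $u-\lambda\xi_y$ is a positive multiple of $\eta$, which forces $h_{S_{x,y}}(u)\to0$ in the corresponding direction; a local computation shows the inner integral $\int_{S(H_y)}h_{S_{x,y}}^{-(n-1)}\,d\sigma$ then diverges (like $(h_\K(\eta)-\langle\eta,x\rangle)^{-1}$). Integrating in $y$ gives $f(x)\to+\infty$, so the convex function $f$ is proper and attains its minimum in $\inter\K$.

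\textbf{Strict convexity — reduction and the easy case.} Fix $v\neq0$, put $x_t=x_0+tv$ and $\psi_{y,u}(t)=h_{S_{x_t,y}}(u)$. Since $s^{-(n-1)}$ is strictly convex, a second–derivative check shows $t\mapsto\psi_{y,u}(t)^{-(n-1)}$ is affine \emph{iff} the concave function $\psi_{y,u}$ is constant; hence $f$ fails to be strictly convex along $v$ only if $\psi_{y,u}$ is constant for almost every $(y,u)$. Differentiating the min-formula (envelope theorem) yields
\[
  \psi_{y,u}'(t)=-\langle n^{*}(t),v\rangle,\qquad n^{*}(t)=u-\lambda^{*}(t)\,\xi_y,
\]
where $n^{*}(t)$ is the outward $\K$-normal direction at the point of $\partial\K$ realizing the constrained maximum. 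Thus $(y,u)$ is \emph{non-degenerate} as soon as for some $t$ this maximizer meets $\partial\K$ with normal not orthogonal to $v$; when $\langle\xi_y,v\rangle\neq0$, constancy of $\psi_{y,u}$ forces $\lambda^{*}(t)$, hence $n^{*}$, to be a fixed direction orthogonal to $v$, so the maximizer must sweep (over the whole segment) a positive–dimensional flat face of $\K$ whose normal is orthogonal to $v$. In particular, if $\K$ is strictly convex no such face exists and every $(y,u)$ with $\langle\xi_y,v\rangle\neq0$ is non-degenerate, giving strict convexity for \emph{any} $\B$.

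\textbf{Role of the hypotheses and the main obstacle.} The hard part is to rule out that the degenerate set of $(y,u)$ has full measure when $\K$ has flat faces, and this is exactly where (i) and (ii) enter. Two clean sources of non-degeneracy guide the argument: any positive–measure set of $y$ with $v\in T_y\partial\B$ (there $\psi_{y,u}$ is affine with slope $-\langle u,v\rangle\neq0$ for a.e. $u$), and, since $\K$ is bounded, the supporting directions near the $v$–extreme points of $\K$, whose normals are not orthogonal to $v$ over an open set of directions. Under (i) the $C^{1}$ hypothesis makes the conormals $\xi_y$ fill a continuum of directions, so these good configurations persist on an open — hence positive–measure — family of $(y,u)$; it is precisely this continuum that breaks down for a polytopal $\B$, whose finitely many conormal directions can be made to resonate with the flat faces of a suitable $\K$, producing the non-uniqueness of the example \ref{rem:cex}. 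Under (ii) the slicing hyperplanes $T_y\partial\B$ are tangent to the very body $\K=\B$ being sliced, and I would exploit this self-correlation to show that a full-measure family of maximizers cannot all persist on flat faces orthogonal to $v$, irrespective of regularity. I expect the decisive difficulty to be this measure-theoretic bookkeeping — proving the degenerate set is never of full measure under (i) or (ii) — which is where the sharp necessary-and-sufficient analysis of Section \ref{sec:convexity} is required.
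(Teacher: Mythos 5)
Your convexity and properness arguments are essentially fine, and two of them even diverge usefully from the paper. The inf-convolution identity $h_{(\K-x)\cap H_y}(u)=\inf_{\lambda\in\R}\bigl(h_{\K}(u-\lambda\xi_y)-\langle u-\lambda\xi_y,x\rangle\bigr)$ gives concavity of the sectional support function in $x$, which is the same mechanism as the paper's Lemma \ref{lem:convexity} (proved there via Minkowski combinations of sections). Your properness argument is a genuinely different route: the paper deduces properness from an isoperimetric inequality for the Holmes--Thompson volume (via mixed volumes, Petty's projection inequality and Reisner's bound for zonoids), whereas your direct divergence estimate — project the critical conormal $\eta$ into a.e.\ tangent hyperplane $H_y$, force $h_{S_{x,y}}\to 0$ in that direction, and check the inner integral blows up like $(h_\K(\eta)-\langle\eta,x\rangle)^{-1}$ — is more elementary and can be made rigorous with a Lipschitz bound on the support function plus Fatou. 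For strict convexity in case (i), your argument is in substance the paper's: $C^1$ regularity makes the Gauss map continuous and surjective, so the set of $y$ whose tangent hyperplane lies in the open, nonempty (by Santal\'o) set of ``strictly convex'' hyperplanes is open in $\partial\B$, hence of positive surface measure.

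The genuine gap is case (ii), and you flag it yourself: ``I would exploit this self-correlation\dots'' and ``I expect the decisive difficulty to be this measure-theoretic bookkeeping'' are statements of intent, not a proof. This case is exactly where the work lies: for non-regular $\B$ the degenerate set you describe can a priori swallow all of $\supp{\mu_{\partial \B}}$, and the paper's Remark \ref{rem:cex} shows this really happens for suitable pairs $(\K,\B)$ — so the hypothesis $\K=\B$ must enter through a concrete mechanism. The paper's mechanism is the following. If strict convexity fails along $[x_1,x_2]$, Lemma \ref{lemma:realcyl} upgrades the failure to an exact cylinder $(H+[x_1,x_2])\cap \B=C+[x_1,x_2]$. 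One then picks $G$ in the support of the surface-area measure of the cross-section $C$ and sets $H'=G\oplus \vect{x_2-x_1}$. Santal\'o's theorem places $H'$ in $\Strcvx[\B]{x_1,x_2}$ because $H'$ contains the direction $x_2-x_1$; and — this is the step your sketch has no substitute for — the cylindrical structure itself forces $H'\in\supp{\mu_{\partial \B}}$: the regular points of $\partial C$ with supporting hyperplane near $G$, swept along $]x_1,x_2[$, form a ruled piece of $\partial \B$ of positive $\hausdorff{n-1}$-measure (a Fubini/Jacobian estimate, using an injective $C^1$ parametrization of that set of regular points of $\partial C$), and the supporting hyperplanes of this piece lie in any prescribed neighborhood of $H'$. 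In other words, the very degeneracy (the cylinder) manufactures a positive-measure family of good tangent hyperplanes of $\B=\K$. Without this construction, case (ii) of the theorem is unproved in your proposal.
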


We split the proof into two subsections. The first concerns the strict convexity of the functional above. The second proves the properness of the functional, using an isoperimetric inequality between Holmes-Thompson notions of volume and area. In a third subsection we explain why the map $\D \mapsto \SS_{\D}(\D)$ gives rise to an affine-invariant point. Finally, in the last subsection we further explore conditions which ensure the strict convexity beyond conditions (i) and (ii).

\subsection{Proof of the strict convexity}

After fixing some isomorphism $V\simeq \R^n$, we can use the standard Euclidean structure to identify the dual body $A^\ast$ with the polar body $A^\circ$ of any convex body $A$ containing the origin as an interior point, and rewrite
\begin{equation}\label{eq:area_HT_int}
  \area_{\K-x}(\partial \B)=\int_{\partial \B} \frac{|(T_y\partial \B \cap (\K-x))^\circ|_{n-1}}{\varepsilon_{n-1}}d\lambda_{n-1}(y)
\end{equation}
using formula (\ref{eq:area_HT}). Here we have denoted by
\begin{itemize}
  \item  $d\lambda_{n-1}$ the $(n-1)$-dimensional volume element induced on hypersufaces by the Euclidean structure
  \item $|C|_{n-1}=\int_Cd\lambda_{n-1}$ the  $(n-1)$-Hausdorff measure of any compact domain $C$ contained in some hyperplane.
\end{itemize}

Recall that a point $y$ in the boundary of a convex body $\B$ is said to be \emph{regular} if there is a unique supporting hyperplane $y + H(y,\B)$ of $\B$ at $y$.  The set $\regpts{\B}\subset \partial \B$ of regular points has $(n-1)$-Hausdorff measure $\hausdorff{n-1}(\regpts{\B})=\hausdorff{n-1}(\partial \B)$ (see \cite{Reid21}), and the map
\[
  y\in \regpts{\B} \mapsto H(y,\B) \in \Gr{\R^n}
\]
is continuous (cf. \cite[Lemma 2.2.12]{Schn13}). Here $\Gr[k]{\R^n}$ denotes  the set of all vector $k$-planes in $\R^n$. Hence, the push-forward of the $(n-1)$-Hausdorff measure $\hausdorff{n-1}$ by this map is well defined and we denote by $\mu_{\partial \B}$ this measure on $\Gr{\R^n}$. The measure $\mu_{\partial \B}$ coincides with the push-forward of the classical {\em surface area measure} of $\B$ (which is a measure on the sphere $\S^{n-1}$) using the standard two folding map $\S^{n-1} \to \Gr{\R^n}$.

Consider the functional
\begin{align*}
  f_{\K}:\Gr{\R^n} \times\inter{\K} & \to (0,\infty) \\
                                  (H,x) & \mapsto |(H\cap(\K-x))^\circ|_{n-1}
\end{align*}
which is continuous as a composition of continuous maps.

\begin{lemma}\label{lem:convexity}
  For any $H \in \Gr{\R^n}$, the function $x\mapsto f_{\K}(H,x)$ is convex on $\inter{\K}$.
\end{lemma}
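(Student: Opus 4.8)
The plan is to fix a hyperplane $H \in \Gr{\R^n}$ and a point $y$, and analyze how the set $H \cap (\K - x)$ varies with $x$. The quantity $f_\K(H,x) = |(H \cap (\K-x))^\circ|_{n-1}$ involves composing two operations: intersecting the translated body $\K-x$ with the fixed $(n-1)$-plane $H$, and then taking the polar dual (inside $H$) and measuring its $(n-1)$-volume. To establish convexity in $x$, I would track the convexity through each step.

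First I would reduce to a one-dimensional statement: convexity on the open convex set $\inter{\K}$ follows from convexity along every line segment, so it suffices to show that $t \mapsto f_\K(H, x_0 + t v)$ is convex for fixed $x_0, v$. Writing $C_t := H \cap (\K - x_0 - tv)$, I note that as $t$ ranges over an interval, these are the slices of $\K$ by a moving parallel family of $(n-1)$-planes (equivalently, slices of the fixed body $\K - x_0$ by the hyperplanes $H + tv$, translated back into $H$). The key structural fact I would invoke is that the graph $\{(t, z) : z \in C_t\} \subset \R \times H$ is a convex body in $\R^n$, since it is an affine image of a slab-intersection of the convex body $\K$. Thus the family $C_t$ is, up to affine reparametrization, a linear section family of a single convex body, and $t \mapsto C_t$ is concave in $t$ in the sense of Minkowski/Brunn (the slices interpolate convexly: $C_{(1-s)t_0 + s t_1} \supseteq (1-s)C_{t_0} + s C_{t_1}$).

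The heart of the matter is then to show that $C \mapsto |C^\circ|_{n-1}$ is a convex functional with respect to this concave (Minkowski-type) structure on slices, i.e. that decreasing the body enlarges the polar and that the polar-volume functional behaves convexly along the family. The clean way to do this is to express the polar volume as an integral over directions in $H$: for a convex body $C$ in the hyperplane $H$ containing the origin, $|C^\circ|_{n-1} = \frac{1}{n-1}\int_{\S^{n-2}} h_C(\theta)^{-(n-1)} \, d\theta$, where $h_C$ is the support function, because the polar body has radial function $1/h_C$. Since the support function $h_{C_t}(\theta)$ is concave in $t$ for each fixed direction $\theta$ (a consequence of the Minkowski concavity of the slices, as $h$ is linear in Minkowski combinations and monotone under inclusion), and since $r \mapsto r^{-(n-1)}$ is convex and decreasing on $(0,\infty)$, the composition $t \mapsto h_{C_t}(\theta)^{-(n-1)}$ is convex for each $\theta$. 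Integrating over $\theta \in \S^{n-2}$ preserves convexity, giving convexity of $t \mapsto f_\K(H, x_0+tv)$.

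The main obstacle I anticipate is handling the moving origin correctly: as $x$ varies, the origin of the fiber $H$ relative to which we take the polar stays fixed in $H$, but the slice $C_t = H \cap (\K - x_0 - tv)$ may fail to contain the origin of $H$ for some $t$, or the point realizing the intersection structure shifts. Since $x \in \inter{\K}$ guarantees $0 \in \inter{\K - x}$, the slice through the origin of $H$ is nonempty and contains $0$ in its relative interior whenever $0 \in H$; but for a general $H$ not through a translate issue, one must verify $0 \in \inter{C_t}$ so that $h_{C_t} > 0$ and the support-function representation is valid. I would address this by noting that $0 \in \inter{\K - x}$ forces $0 \in \inter{H \cap (\K-x)}$ precisely on the relevant range, keeping all support functions strictly positive, and that the concavity of $t \mapsto h_{C_t}(\theta)$ is exactly the linear-section Brunn-type statement that makes the argument go through without further regularity assumptions on $\B$ or $\K$.
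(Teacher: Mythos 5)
Your proof is correct and follows essentially the same route as the paper: both express the polar volume as $\frac{1}{n-1}\int_{\S^{n-2}(H)} h_{H\cap(\K-x)}(u)^{-(n-1)}\,d\lambda_{n-2}(u)$, establish concavity of $x\mapsto h_{H\cap(\K-x)}(u)$ via the inclusion $H\cap[\lambda A+(1-\lambda)A']\supseteq \lambda(H\cap A)+(1-\lambda)(H\cap A')$ (your Brunn-type slice concavity is exactly this fact), and conclude by composing with the convex decreasing function $r\mapsto r^{-(n-1)}$ and integrating. Your concern about the origin is resolved just as you suggest: since $H\in\Gr{\R^n}$ is a linear hyperplane, $0\in\inter{\K-x}$ forces the support functions to be strictly positive, so no extra hypotheses are needed.
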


\begin{proof}
By \cite[eqs.~(1.52)~\&~(1.53)]{Schn13}, in an $m$-dimensional Euclidean vector space $(W,\langle\cdot, \cdot \rangle)$, the Lebesgue measure of any convex body $C$ can be computed using the  formula
\[
  |C|_m={\frac{1}{m}}\int_{\S^{m-1}(W)} \left( \frac{1}{h_{C^\circ}(u)} \right)^{m}d\lambda_{m-1}(u),
\]
where $\S^{m-1}(W)$ denotes the unit sphere in $(W,\langle\cdot, \cdot \rangle)$ and $h_{C^\circ}$ is the support function of the polar body $C^\circ$, defined for any $x \in W$ by $h_{C^\circ}(x)=\max \{\langle x,y\rangle \mid y \in C^\circ\}$. Therefore
\[
  f_{\K}(H,x)={\frac{1}{n-1}}\int_{\S^{n-2}(H)}  \left(\frac{1}{h_{H\cap(\K-x)}(u)} \right)^{n-1}d\lambda_{n-2}(u).
\]
Now observe that $\forall u \in \S^{n-2}(H)$ the function $x\mapsto h_{H\cap(\K-x)}(u)$ is concave as for any $\lambda \in (0,1)$ and $x_1,x_2 \in \inter{\K}$ we have
\begin{align*}
  h_{H \cap (\K-\lambda x_1-(1-\lambda)x_2)}(u)
    & =\max \{\langle u,z\rangle \mid z \in H\cap [\lambda (\K-x_1)+(1-\lambda)(\K-x_2)]\}              \\
    & \geq \max \{\langle u,z\rangle \mid z \in \lambda[ H\cap (\K-x_1)]+(1-\lambda)[H\cap (\K-x_2))]\} \\
    & =\lambda h_{H\cap (\K-x_1)}(u)+(1-\lambda) h_{H\cap (\K-x_2)}(u).
\end{align*}
The function $t\mapsto {1 /t^{n-1}}$ being strictly convex on $(0,\infty)$, we get that the function
\[
  x\mapsto \left( \frac{1}{h_{H\cap(\K-x)}(u)} \right)^{n-1}
\]
is convex. So $f_{\K}(H,\cdot)$ is also convex as an integral of convex functions.
\end{proof}

For any $x_{1}\neq x_{2} \in \inter{\K}$ let us define $\Strcvx[\K]{x_1,x_2} \subset \Gr{\R^n}$ as the set of hyperplanes $H$ such that
\begin{equation}\label{eq:strict}
   f_{\K}(H,x_1) + f_{\K}(H,x_2) - 2 \, f_{\K}(H,(x_1 + x_2)/2) > 0.
\end{equation}
Observe that $\Strcvx[\K]{x_1,x_2}$ is an open set by continuity of $f_{\K}$. Furthermore, for a fixed $H$, we know by \cite{San49} that for any $x_0\in \inter{\K}$ the map $x\mapsto f_{\K}(H,x+x_0)$ is strictly convex on $H\cap\inter{\K-x_0}$. So in particular, we always have $\Strcvx[\K]{x_1,x_2}\neq \emptyset$ as it contains any hyperplane $H$ containing  $x_2-x_1$. Now let us say that $(x_1,x_2) \in \inter{\K}\times \inter{\K}\setminus \Delta$ is a \emph{cylindrical pair} for $\K$ if $\Strcvx[\K]{x_1,x_2}\neq \Gr{\R^n}$. Here $\Delta$ denotes the diagonal subset. We denote by $\cyl{\K} \subset  \inter{\K}\times \inter{\K}\setminus \Delta$ the subset of cylindrical pairs associated to $\K$. The term cylindrical pair is justified by the following result.

\begin{lemma}\label{lemma:realcyl}
  For any $(x_1,x_{2}) \in \cyl{\K}$, we can find a hyperplane $H\in \Gr{\R^n}$ and a convex body $C$ of $H$ such that
  \[
    H\cap(\K-x)=C
  \]
  for any $x \in [x_1,x_2]$.
   Equivalently, we have that $(H+ [x_1,x_2])\cap \K=C+ [x_1,x_2]$.
\end{lemma}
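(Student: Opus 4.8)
The plan is to exhibit the required hyperplane as \emph{any} witness to the failure of strict convexity, and then to promote the equality case of the convexity inequality of Lemma \ref{lem:convexity} into a rigidity statement forcing the slices of $\K$ to be literally constant along the segment $[x_1,x_2]$.

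First, since $(x_1,x_2)\in\cyl{\K}$, by definition $\Strcvx[\K]{x_1,x_2}\neq\Gr{\R^n}$, so I may fix a hyperplane $H\in\Gr{\R^n}$ for which \eqref{eq:strict} fails, i.e.
\[
  f_{\K}(H,x_1)+f_{\K}(H,x_2)-2\,f_{\K}(H,m)=0,\qquad m=\tfrac{x_1+x_2}{2}.
\]
I would then use the integral representation obtained in the proof of Lemma \ref{lem:convexity},
\[
  f_{\K}(H,x)=\frac{1}{n-1}\int_{\S^{n-2}(H)}\Big(\frac{1}{h_{H\cap(\K-x)}(u)}\Big)^{n-1}d\lambda_{n-2}(u),
\]
together with the fact, also established there, that for each fixed $u$ the integrand is a convex function of $x$, being the composition of the strictly convex and strictly decreasing function $F(s)=s^{-(n-1)}$ with the concave positive function $x\mapsto h_{H\cap(\K-x)}(u)$. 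Hence the second symmetric difference of the integrand at $x_1,x_2,m$ is nonnegative for every $u$, and since its integral over $\S^{n-2}(H)$ vanishes while the integrand is continuous in $u$, it must vanish identically: writing $x_t:=(1-t)x_1+tx_2$ and $\phi_u(t):=h_{H\cap(\K-x_t)}(u)$, we get $F(\phi_u(0))+F(\phi_u(1))=2\,F(\phi_u(\tfrac12))$ for every $u\in\S^{n-2}(H)$.

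The key step is the rigidity argument, which I expect to be the main (though elementary) obstacle, as it is exactly where the degree $n-1$ and the positivity of the support function enter. Fix $u$ and set $\psi_u=F\circ\phi_u$ on $[0,1]$. Being convex with midpoint equality, $\psi_u$ is affine on $[0,1]$. For arbitrary $t_1<t_2$ with midpoint $\bar t$, concavity of $\phi_u$ and monotonicity of $F$ give $\psi_u(\bar t)\le F\big(\tfrac12(\phi_u(t_1)+\phi_u(t_2))\big)$, while convexity of $F$ gives $F\big(\tfrac12(\phi_u(t_1)+\phi_u(t_2))\big)\le\tfrac12(\psi_u(t_1)+\psi_u(t_2))=\psi_u(\bar t)$. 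Equality throughout forces, by injectivity of $F$, that $\phi_u$ is midpoint affine, and, by strict convexity of $F$, that $\phi_u(t_1)=\phi_u(t_2)$. As $t_1,t_2$ are arbitrary, $\phi_u$ is constant on $[0,1]$.

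Finally, constancy of $t\mapsto h_{H\cap(\K-x_t)}(u)$ for every $u\in\S^{n-2}(H)$ means that the support functions of the $(n-1)$-dimensional convex bodies $H\cap(\K-x_t)$ all agree, so these bodies equal a single convex body $C\subset H$ independent of $t$; this is the asserted identity $H\cap(\K-x)=C$ for all $x\in[x_1,x_2]$. (In particular $x_2-x_1\notin H$, since for a hyperplane containing $x_2-x_1$ the slices are genuine translates and, by \cite{San49}, strict convexity would hold, contradicting the choice of $H$.) For the reformulation I would rewrite the identity as $\K\cap(x_t+H)=x_t+C$, use that $H+[x_1,x_2]=\bigcup_{t\in[0,1]}(x_t+H)$ since $H$ is a linear subspace, and intersect with $\K$ to conclude $(H+[x_1,x_2])\cap\K=\bigcup_{t\in[0,1]}(x_t+C)=C+[x_1,x_2]$.
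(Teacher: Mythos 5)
Your proof is correct and follows essentially the same route as the paper: pick a witness $H\notin\Strcvx[\K]{x_1,x_2}$, pass from the vanishing integral to pointwise equality in $u$, and use strict convexity of $s\mapsto s^{-(n-1)}$ together with concavity of $u\mapsto h_{H\cap(\K-x)}(u)$ in $x$ to force the support functions, hence the slices, to be constant along $[x_1,x_2]$. The only difference is one of detail: the paper compresses the rigidity step into the phrase ``it follows by the proof of Lemma \ref{lem:convexity}'', whereas you spell out exactly the equality-case analysis that phrase refers to.
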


\begin{proof}
Picking $H \notin \Strcvx[\K]{x_1,x_2}$ we get that
\[
 f_{\K}(H,\lambda x_1 +(1-\lambda)x_2)= \lambda f_{\K}(H,(x_1)+(1-\lambda) f_{\K}(H,x_2)
\]
for $\lambda=1/2$ and thus for all $\lambda\in(0,1)$. It follows by the proof of Lemma \ref{lem:convexity} that  $\forall u \in \S^{n-2}(H)$ and $\forall \lambda \in [0,1]$
\[
  h_{H\cap(\K-x_1)}(u)=h_{H\cap(\K-\lambda x_1 - (1-\lambda)x_2)}(u).
\]
Therefore there exists a convex body $C$ of $H$ such that $H\cap(\K-z)=C$ for any $z \in [x_1,x_2]$. Equivalently, we have that $(H+ [x_1,x_2])\cap \K=C+ [x_1,x_2]$.
\end{proof}

The following result describes how cylindrical pairs of $\K$ should interact with the support of $\mu_{\partial \B}$ to ensure strict convexity for the Holmes-Thompson area. Recall that the support $\supp{\mu}$ of a Borel measure $\mu$ is the intersection of all closed sets of total measure.

\begin{proposition}\label{str-conv-ab}
  The map $x\mapsto\area_{\K-x}(\partial \B)$ is strictly convex on $\inter{\K}$ if and only if $\supp{\mu_{\partial \B}} \cap \Strcvx[\K]{x_1,x_2} \neq \emptyset $ for every pair $(x_1,x_2) \in \cyl{\K}$.
\end{proposition}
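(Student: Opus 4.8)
The plan is to reduce the statement to a pointwise analysis on the Grassmannian, where the pieces are already assembled. First I would rewrite the functional as an integral against $\mu_{\partial \B}$. Combining \eqref{eq:area_HT_int} with the definition of $\mu_{\partial \B}$ as the push-forward of $\hausdorff{n-1}$ under the (a.e.-defined, continuous) Gauss map $y\mapsto H(y,\B)$, and using that singular boundary points are $\hausdorff{n-1}$-negligible, one gets
\[
  \area_{\K-x}(\partial \B)=\frac{1}{\eps_{n-1}}\int_{\Gr{\R^n}} f_{\K}(H,x)\,d\mu_{\partial \B}(H).
\]
By Lemma \ref{lem:convexity} each integrand $f_{\K}(H,\cdot)$ is convex, hence so is the functional, and I would invoke the standard fact that a convex function is strictly convex if and only if it is affine on no nondegenerate segment, equivalently if and only if its midpoint gap is strictly positive for every pair of distinct points (equality at the midpoint forces affineness on the whole chord).

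Next I would introduce the midpoint gap. For distinct $x_1,x_2\in\inter{\K}$ set
\[
  \Phi(x_1,x_2)=\frac{1}{\eps_{n-1}}\int_{\Gr{\R^n}}\big[f_{\K}(H,x_1)+f_{\K}(H,x_2)-2f_{\K}(H,(x_1+x_2)/2)\big]\,d\mu_{\partial \B}(H).
\]
By convexity of $f_{\K}(H,\cdot)$ the integrand is nonnegative, and by the definition \eqref{eq:strict} it is strictly positive exactly on $\Strcvx[\K]{x_1,x_2}$. Consequently $\Phi(x_1,x_2)>0$ if and only if $\mu_{\partial \B}(\Strcvx[\K]{x_1,x_2})>0$, and by the midpoint reduction above the functional is strictly convex precisely when this positivity holds for every distinct pair.

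The core observation is then a general measure-theoretic lemma: for an \emph{open} set $U$ one has $\mu(U)>0$ if and only if $U\cap\supp{\mu}\neq\emptyset$ (one implication because $\mu$ vanishes off its support, the other because every neighbourhood of a support point has positive measure). Since $\Strcvx[\K]{x_1,x_2}$ is open, this converts the positivity of $\mu_{\partial \B}(\Strcvx[\K]{x_1,x_2})$ into the intersection condition of the statement. Finally I would dispose of the pairs lying outside $\cyl{\K}$: for such a pair $\Strcvx[\K]{x_1,x_2}=\Gr{\R^n}$, so its intersection with $\supp{\mu_{\partial \B}}$ equals $\supp{\mu_{\partial \B}}$, which is nonempty because $\mu_{\partial \B}$ is a nonzero measure (its total mass is $\hausdorff{n-1}(\partial \B)>0$); moreover the integrand in $\Phi$ is strictly positive everywhere, so $\Phi(x_1,x_2)>0$ holds automatically. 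Thus only cylindrical pairs impose a genuine constraint, and the three equivalences chain together to give the claim.

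The analytic content here is light — essentially only Lemma \ref{lem:convexity} and the support lemma — so the main delicate point is the bookkeeping: cleanly linking the midpoint reduction of strict convexity, the positivity of the $\mu_{\partial \B}$-measure of the open set $\Strcvx[\K]{x_1,x_2}$, and the nonempty-intersection-with-support condition, while verifying that non-cylindrical pairs contribute no restriction. The one place warranting care is the reduction of strict convexity to a condition checked only at midpoints of all distinct pairs, which is where the convexity of the integrand (Lemma \ref{lem:convexity}) is genuinely used.
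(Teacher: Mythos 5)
Your proposal is correct and follows essentially the same route as the paper: rewrite the area as $\frac{1}{\eps_{n-1}}\int_{\Gr{\R^n}} f_{\K}(H,\cdot)\,d\mu_{\partial \B}$, observe via Lemma \ref{lem:convexity} that the midpoint gap is an integral of a nonnegative continuous function that is positive exactly on the open set $\Strcvx[\K]{x_1,x_2}$, and conclude by the fact that such an integral is positive if and only if $\supp{\mu_{\partial \B}}$ meets that open set. Your extra bookkeeping (the midpoint-gap characterization of strict convexity and the dismissal of non-cylindrical pairs) is exactly what the paper leaves implicit in its proof and in the remark immediately following the proposition.
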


Observe  that $\supp{\mu_{\partial \B}} \cap \Strcvx[\K]{x_1,x_2} \neq \emptyset$ for every $(x_1,x_2) \notin \cyl{\K}$ as $\Strcvx[\K]{x_1,x_2}=\Gr{\R^n}$ in this case. So we can replace ``for every pair $(x_1,x_2) \in \cyl{\K}$'' by ``for every $x_1\neq x_2 \in \inter{\K}$'' in the proposition above.
\begin{proof}
The integral of a non-negative continuous function against a Borel measure is positive if and only if the support of the measure meets the interior of the support of the function. Therefore, fixing $ x_1\neq x_2 \in \inter{\K}$, we get that
\begin{multline*}
  \area_{\K-x_1}(\partial \B) +  \area_{\K-x_2}(\partial \B) - 2 \, \area_{\K-(x_1 + x_2)/2}(\partial \B) \\
  = \frac{1}{\eps_{n-1}}\int_{\Gr{\R^n}}\left[ f_{\K}(H,x_1) + f_{\K}(H,x_2) - 2 \, f_{\K}(H,(x_1 + x_2)/2)\right] d\mu_{\partial \B}(H)
\end{multline*}
is positive if and only if $\Strcvx[\K]{x_1,x_2}$ meets $\supp{\mu_{\partial \B}}$.
\end{proof}

We are now ready to prove the strict convexity part of Theorem \ref{teo:convex_proper}.

\begin{proof}[\textup{\textbf{Proof of the strict convexity in Theorem \ref{teo:convex_proper}}}]

(i) Suppose first that $\partial \B$ is of class $C^1$. Thus $\supp{\mu_{\partial \B}} = \Gr{\R^n}$ which implies that the map $x\mapsto\area_{\K-x}(\partial \B)$ is always strictly convex on $\inter{\K}$ according to Proposition \ref{str-conv-ab}.

(ii) Suppose now that $\K=\B$.
By Proposition \ref{str-conv-ab}  we need to prove that $\supp{\mu_{\partial \B}}\cap \mathrm{Strcvx}_{\B}(x_1,x_2)\neq \emptyset$ for every $(x_1,x_2)\in \cyl{\B}$.
 
 By Lemma \ref{lemma:realcyl}, given any $(x_1,x_2)\in \cyl{\B}$, there exists $H\in\Gr{\R^n}$ and $C$ a convex body in $H$ such that $(H+[x_1,x_2])\cap \B=C+[x_1,x_2]$. Let $\mu_{\partial C}$ be the push-forward measure of the surface area measure of $C$ by the quotient map $S^{n-2}(H)\to \Gr[n-2]{H}$. Take any $G\in\supp{\mu_{\partial C}}$, and consider the hyperplane $H'=G\oplus \mathrm{span}(x_2-x_1)$. Since $x_2-x_1\in H'$, we know by [Santal\'o,1949] that $H'\in\mathrm{Strcvx}_{\B}(x_1,x_2)$. Let  $N$ be any open neighborhood of $H'$ in $\Gr{\R^n}$. Let us prove that $\mu_{\partial \B}(N)>0$.  
 Consider the map
 \begin{eqnarray*}
  \iota:\Gr[n-2]{H}&\to &\Gr{\R^n}\\
  F&\mapsto & F\oplus \mathrm{span}(x_2-x_1)
\end{eqnarray*}
which is continuous.
Since $\iota^{-1}(N)$ is open and contains $G\in\supp{\mu_{\partial C}}$ we have $\mu_{\partial C}(\iota^{-1}(N))>0$. Let $U$ be the set of regular points of $\partial \B$ with supporting hyperplane inside $N$, and let $V\subset \partial C$ consist of the regular points of $\partial C$ with supporting hyperplane in $\iota^{-1}(N)$. By  \cite[Theorems 25.1 and 25.5]{Rock97}, there exists an injective $C^1$-map $\varphi$ defined on some measurable set $D\subset \R^{n-2}$ and such that $\varphi(D)=V$. 
Set
\begin{eqnarray*}
\bar{\varphi} : D \times ]x_1,x_2[&\to & V+]x_1,x_2[\subset U\\
(y,x)&\mapsto&\varphi(y)+x.
\end{eqnarray*}
Observe that the Jacobians of $\varphi$ and $\bar{\varphi}$ satisfy the inequality $\mathrm{Jac}_x \, \bar{\varphi}\geq \mathrm{Jac}_x \, \varphi \cdot \sin \alpha$
where $\alpha$ denotes the angle between $x_2-x_1$ and $H$.
Therefore we have
\begin{align*}
\mu_{\partial \B} (N) =\hausdorff{n-1}(U)
&\geq \int_{D\times]x_1,x_2[}  \mathrm{Jac}_x\bar{\varphi}\\
&\geq |x_2-x_1| \cdot \sin\alpha \cdot \int_D  \mathrm{Jac}_x\varphi\\
 &= | x_2-x_1|\cdot\sin(\alpha)\cdot \hausdorff{n-2}(V)\\
 &= | x_2-x_1|\cdot\sin(\alpha)\cdot \mu_{\partial C}(\iota^{-1}(N)) >0,
 \end{align*}
as claimed. This implies that $H'\in \supp{ \mu_{\partial \B}}$. Since also $H'\in\mathrm{Strcvx}_{\B}(x_1,x_2)$, this proves strict convexity in case (ii).
\end{proof}

\begin{remark}\label{rem:cex}
  We now present an example of  two convex bodies $\K$ and $\B$ in the plane $\R^2$ such that the functional $x\mapsto \area_{\K-x}(\partial \B)$ is convex but not strictly convex:
  \begin{center}
      \includegraphics{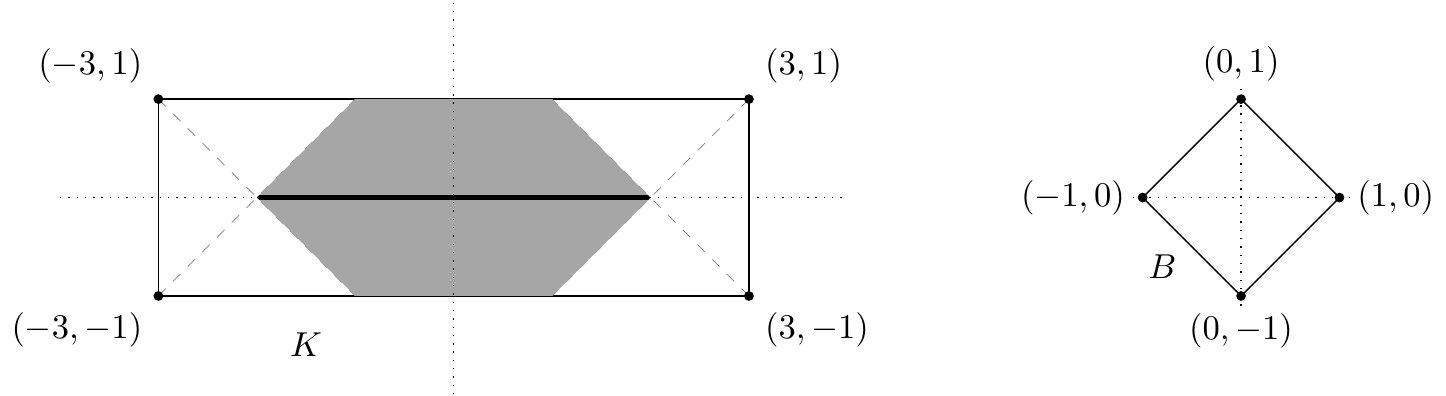}
  \end{center}
  Observe that $\supp{\mu_{\partial \B}}$ consists of the two directions ${x=y},{x=-y}$ each of weight $2\sqrt{2}$. It is easy to check that the interior of the shaded area corresponds to points $(x,y)$ where the functional $t\mapsto \area_{\K-(x+t,y)}(\partial \B)$ is constant for small values of $t$. By symmetry we deduce that this functional achieves its minimum on any point lying on the black segment.
\end{remark}

\subsection{Proof of the properness}

In order to prove Theorem \ref{teo:convex_proper}, it remains to show that the map is proper on the interior of $\K$. This holds without any assumption on the convex body $\B$, and easily follows from the well-known fact that the map $x\mapsto \vol_{\K-x}(\B)$ is proper on the interior of $\K$ and the following generalization of the classical isoperimetric inequality to asymmetric Minkowski spaces.

\begin{proposition}[Isoperimetric inequality for Holmes-Thompson volume]\label{prop:Isop}
  Let $\B$ and $\K$ be two convex bodies of an $n$-dimensional real vector space $V$. Suppose that $\K$ contains the origin as an interior point. Then
  \[
    \frac{\area_{\K}(\partial \B)^n}{\vol_{\K}(\B)^{n-1}}\geq \frac{(4n)^n}{n!\eps_n}.
  \]
\end{proposition}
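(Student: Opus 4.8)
The plan is to recast the inequality in the language of mixed volumes. Both sides are invariant under independent scalings of $\B$ and $\K$ and under translations of $\B$, so after fixing a Euclidean structure on $V\simeq\R^n$ and translating we may assume the origin lies in the interior of both bodies and identify dual bodies with polars. Recall $\vol_{\K}(\B)=|\B|\,|\K^\circ|/\eps_n$, while \eqref{eq:area_HT_int} writes $\area_{\K}(\partial\B)$ as the integral over $\partial\B$ of $|(T_y\partial\B\cap\K)^\circ|_{n-1}/\eps_{n-1}$. Using the elementary duality $(\K\cap H)^\circ=\pi_H(\K^\circ)$ for a linear hyperplane $H$ (the same identity $(\D\cap H)^\ast=r_{H^\ast}(\D^\ast)$ used in the proof of Proposition \ref{prop:AP}), the integrand becomes $|\pi_{\nu(y)^\perp}(\K^\circ)|_{n-1}$, where $\nu$ is the Euclidean Gauss map. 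Pushing $\nu$ forward to the surface area measure $S_{\B}$ of $\B$ gives
\[
  \area_{\K}(\partial\B)=\frac{1}{\eps_{n-1}}\int_{\S^{n-1}}|\pi_{u^\perp}(\K^\circ)|_{n-1}\,dS_{\B}(u).
\]

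Next I would recognize the shadow volume $u\mapsto|\pi_{u^\perp}(\K^\circ)|_{n-1}$ as the support function $h_{\Pi\K^\circ}$ of the projection body $\Pi\K^\circ$, which is a convex body (indeed a zonoid). The integral representation of mixed volumes then gives $\int_{\S^{n-1}}h_{\Pi\K^\circ}\,dS_{\B}=n\,V(\B,\dots,\B,\Pi\K^\circ)$, so that $\area_{\K}(\partial\B)=\frac{n}{\eps_{n-1}}V(\B,\dots,\B,\Pi\K^\circ)$ with $\B$ repeated $n-1$ times. Applying Minkowski's first inequality $V(\B,\dots,\B,C)\geq|\B|^{(n-1)/n}|C|^{1/n}$ with $C=\Pi\K^\circ$ yields
\[
  \area_{\K}(\partial\B)\geq\frac{n}{\eps_{n-1}}\,|\B|^{(n-1)/n}\,|\Pi\K^\circ|^{1/n}.
\]
Raising to the $n$-th power and substituting $\vol_{\K}(\B)=|\B||\K^\circ|/\eps_n$, the whole claim reduces to the single affine lower bound $|\Pi\K^\circ|\geq\frac{4^n\eps_{n-1}^n}{n!\,\eps_n^n}|\K^\circ|^{n-1}$ for the volume of the projection body.

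The hard part is exactly this lower bound for $|\Pi A|$ in terms of $|A|^{n-1}$: it cannot be obtained by bounding each shadow $|\pi_{u^\perp}A|$ from below, since shadows genuinely degenerate in thin directions, so the coupling between directions must be kept. The sharp estimate $|\Pi A|\geq(\eps_{n-1}^n/\eps_n^{n-2})|A|^{n-1}$, extremal at ellipsoids, follows from Petty's projection inequality together with the Blaschke--Santal\'o inequality, and would in fact deliver the stronger Euclidean constant $n^n\eps_n$ in place of $(4n)^n/(n!\eps_n)$; to keep the argument self-contained one may instead replace these deep facts by an elementary lower bound for the volume of the zonoid $\Pi A$, at the cost of the weaker constant $4^n/n!$. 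Either way one assembles the constants as in the displays above, and it is worth recording the numerical check $\eps_n^2\geq 4^n/n!$ (equivalently $\pi^n\,n!\geq 4^n\,\Gamma(n/2+1)^2$, which follows from the Legendre duplication formula); this confirms that the stated constant is the weaker of the two and that the chain is consistent with the Euclidean baseline $\K=\B=\S^{n-1}$, where $\area_{\K}(\partial\B)=n\eps_n$ and $\vol_{\K}(\B)=\eps_n$.
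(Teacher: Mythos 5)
Your reduction is sound, and up to packaging it is exactly the paper's: your identity $\area_{\K}(\partial\B)=\frac{n}{\eps_{n-1}}V(\B[n-1],\Pi\K^\circ)$ is the paper's $\area_{\K}(\partial \B)=n\,V(\B[n-1],I_{\K})$ with the isoperimetrix $I_{\K}=\Pi\K^\circ/\eps_{n-1}$, and Minkowski's first inequality then reduces the proposition, as you say, to the single bound
\[
|\Pi A|\;\geq\;\frac{4^n\eps_{n-1}^n}{n!\,\eps_n^n}\,|A|^{n-1},\qquad A=\K^\circ.
\]
The genuine gap is in how you propose to prove this bound. You assert that the sharp form $|\Pi A|\geq(\eps_{n-1}^n/\eps_n^{n-2})\,|A|^{n-1}$ ``follows from Petty's projection inequality together with the Blaschke--Santal\'o inequality.'' It does not, and no argument of that shape can work: Petty's projection inequality is the \emph{upper} bound $|(\Pi A)^\circ|\,|A|^{n-1}\leq(\eps_n/\eps_{n-1})^n$, and Blaschke--Santal\'o is the \emph{upper} bound $|\Pi A|\,|(\Pi A)^\circ|\leq\eps_n^2$; two upper bounds cannot be combined into a lower bound for $|\Pi A|$. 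In fact the sharp estimate you invoke is precisely Petty's conjectured projection inequality, a well-known open problem (your own computation shows it would give the sharp constant $n^n\eps_n$), so it certainly is not a consequence of those two classical results.

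What is actually needed --- and what the paper uses --- is a \emph{lower} bound on the volume product of the zonoid $\Pi A$, namely Reisner's reverse Santal\'o inequality for zonoids, $|Z|\,|Z^\circ|\geq 4^n/n!$. Since $\Pi A$ is a zonoid, combining Reisner with Petty gives
\[
|\Pi A|\;\geq\;\frac{4^n/n!}{|(\Pi A)^\circ|}\;\geq\;\frac{4^n}{n!}\cdot\frac{\eps_{n-1}^n}{\eps_n^n}\,|A|^{n-1},
\]
which is exactly the bound your reduction requires and yields the stated constant $(4n)^n/(n!\,\eps_n)$; the paper runs this computation on the normalized isoperimetrix $\tilde{I}_{\K}$, a dilate of $\Pi\K^\circ$. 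Your fallback suggestion --- that ``an elementary lower bound for the volume of the zonoid $\Pi A$'' would give the weaker constant $4^n/n!$ --- is unsubstantiated: no such bound is exhibited, and the constant $4^n/n!$ is precisely Reisner's (non-elementary) theorem, which moreover must still be paired with Petty's projection inequality to pass from the volume product to $|\Pi A|$ alone. So the proof is incomplete at its decisive step; replacing Blaschke--Santal\'o by Reisner's inequality repairs it and makes your argument a correct, slightly streamlined version of the paper's.
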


This inequality is stated in \cite[Theorem 6.6.4]{Tho96} in the case where $\K$ is symmetric. The proof straightforwardly generalizes to the non-symmetric case so we briefly survey the arguments.

\begin{proof}
After fixing some isomorphism $V\simeq \R^n$, we identify the dual body $A^\ast$ of any convex body $A$ containing the origin as an interior point with its polar body $A^\circ$. First recall that (compare with \cite[Theorem 5.2.2]{Tho96})
\[
  \area_{\K}(\partial \B)=n \cdot V(\B[n-1],I_{\K})
\]
where $V$ denotes the mixed volume and $I_{\K}$ the isoperimetrix convex body defined as the unique symmetric convex body with support function
\begin{equation}\label{eq:isoperimetrix}
  h_{I_{\K}}(u)=|\pi_{u^\perp}(\K^\circ)|/\eps_{n-1}=|(\K\cap u^\perp)^\circ|/\eps_{n-1}.
\end{equation}
Observe that  $I_{\K}$ is a zonoid according to \cite[Formula 5.80 and Theorem 3.5.3]{Schn13}.

Next we check that
\[
  \frac{\area_{\K}(\partial \B)^n}{\vol_{\K}(\B)^{n-1}}\geq \frac{\area_{\K}(\partial I_{\K})^n}{\vol_{\K}(I_{\K})^{n-1}}.
\]
Indeed as $\area_{\K}(\partial I_{\K})=n|I_{\K}|$ and $\vol_{\K}(\B)=|\B||\K^\circ|/\eps_n$, the above inequality is equivalent to
\[
  V(\B[n-1],I_{\K})^n\geq |I_{\K}||\B|^{n-1}
\]
which directly follows from Minkowski's inequality, see \cite[Theorem 7.2.1]{Schn13}.

Now recall that the normalized isoperimetrix convex body $\tilde{I}_{\K}$ is defined as the unique dilated of the isoperimetrix satisfying $n\vol_{\K}(\tilde{I}_{\K})=\area_{\K}(\partial \tilde{I}_{\K})$. Therefore
\[
  \frac{\area_{\K}(\partial \B)^n}{\vol_{\K}(\B)^{n-1}}\geq \frac{\area_{\K}(\partial \tilde{I}_{\K})^n}{\vol_{\K}(\tilde{I}_{\K})^{n-1}}=n^n \vol_{\K}(\tilde{I}_{\K})=n^n \frac{|\K^\circ||\tilde{I}_{\K}|}{\eps_n}.
\]
Besides it is easy to see that in fact  $\tilde{I}_{\K}=\eps_n\cdot I_{\K}/|\K^\circ|$.
Using equation (\ref{eq:isoperimetrix}), we also see that $\eps_{n-1}\cdot h_{I_{\K}}$ is precisely the support function of the projection body $\Pi \K^\circ$. Consequently we find the identity $\tilde{I}_{\K}=(\eps_{n}/\eps_{n-1}|\K^\circ|)\cdot \Pi \K^\circ$. Therefore
\[
  |(\tilde{I}_{\K})^\circ|=\left(\frac{\eps_{n-1}|\K^\circ|}{\eps_n}\right)^n|(\Pi \K^\circ)^\circ|=\left(\frac{\eps_{n-1}}{\eps_n}\right)^n|\K^\circ||\K^\circ|^{n-1}|(\Pi \K^\circ)^\circ|,
\]
which implies together with Petty's projection inequality \cite{Petty71}, see \cite[Formula 10.86]{Schn13}, that
\[
  |(\tilde{I}_{\K})^\circ| \leq|\K^\circ|.
\]
Finally
\[
  \frac{\area_{\K}(\partial \B)^n}{\vol_{\K}(\B)^{n-1}}\geq n^n \frac{|(\tilde{I}_{\K})^\circ||\tilde{I}_{\K}|}{\eps_n}\geq  \frac{(4n)^n}{n!\eps_n}
\]
using Reisner optimal lower bound on the volume product for zonoids \cite{Reis85}.
\end{proof}

The properness statement in Theorem \ref{teo:convex_proper} is now a consequence of the following simple fact.

\begin{proposition}\label{prop:paral}
 Let $B_k\to B$ be a convergent sequence of convex bodies in $\R^n$, and assume that $0\in\inter{B_k}$ for each $k$ and $0\in\partial B$. Then $|B_k^\circ|\to\infty$.
\end{proposition}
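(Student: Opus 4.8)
The plan is to show that the polar bodies $B_k^\circ$ become unbounded because the origin approaches the boundary of $B$. The key geometric intuition is that when a point sits on the boundary of a convex body, its polar is unbounded: there is a supporting direction at the origin along which the polar body extends to infinity.

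First I would fix the setup. Since $0 \in \partial B$, there is a supporting hyperplane to $B$ at the origin, i.e.\ a unit vector $u$ with $\langle u, x\rangle \le 0$ for all $x \in B$. Equivalently, $B$ lies in the closed half-space $\{x : \langle u,x\rangle \le 0\}$. The heuristic is that $tu$ should lie in $B_k^\circ$ for arbitrarily large $t$, making $|B_k^\circ| \to \infty$, but this is not literally true for each fixed $k$ (since $0 \in \inter{B_k}$ forces $B_k^\circ$ to be bounded), so I need to track how the polars degenerate as $k \to \infty$.

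The cleanest route I would take is to exploit the standard fact that Hausdorff convergence $B_k \to B$ with $0 \in \inter{B_k}$ and $0 \in \inter{B}$ would give $B_k^\circ \to B^\circ$; here $0 \in \partial B$ instead, and $B^\circ$ is unbounded. Concretely, I would argue as follows. Let $u$ be the supporting direction above. For any large $R>0$, I claim that $Ru \in B_k^\circ$ for all sufficiently large $k$. To see this, note $Ru \in B_k^\circ$ means $\langle Ru, x\rangle \le 1$ for all $x \in B_k$, i.e.\ $\langle u,x\rangle \le 1/R$ for all $x\in B_k$. Since $B_k \to B$ in Hausdorff distance and $\max_{x\in B}\langle u,x\rangle = 0$, the support function values converge, so $\max_{x\in B_k}\langle u,x\rangle \to 0 < 1/R$; hence for $k$ large the inequality holds and $Ru \in B_k^\circ$. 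Thus $B_k^\circ$ contains the point $Ru$ at distance $R$ from the origin for all large $k$.

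Containing one far-away point does not by itself force large volume, so the main obstacle is to upgrade this to a volume lower bound. To do this I would also produce, for the same $B_k$, a fixed small $(n-1)$-dimensional ``base'' inside $B_k^\circ$ that persists in the limit. Since $B \to B$ is a genuine convex body with nonempty interior and $0 \in \partial B$, the polar $B^\circ$ (taken about the origin) is a closed convex set containing the ray $\{tu : t \ge 0\}$ and having nonempty interior in the directions transverse to $u$; I would pick a small $(n-1)$-dimensional disc $E$ around some interior point of $B^\circ$ transverse to $u$ and use the convergence of support functions in finitely many directions to guarantee $E \subset B_k^\circ$ for large $k$. Then $B_k^\circ$ contains the convex hull of $E$ and the point $Ru$, whose volume is bounded below by a positive constant times $R \cdot |E|_{n-1}$. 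Letting $R \to \infty$ (taking $k$ large enough at each stage) forces $|B_k^\circ| \to \infty$. The delicate point to handle carefully is choosing $E$ uniformly: I must ensure the transverse base survives the degeneration, which is where the convexity of the limit $B$ and the fact that $0$ is only a boundary (not an extreme ``corner'') point would be used; alternatively one phrases everything through the lower semicontinuity of $x \mapsto |(\K-x)^\circ|$ established by Santaló, which already guarantees the liminf is $+\infty$ when the limiting point lies on $\partial \K$.
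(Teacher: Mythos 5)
Your main argument is correct, and it takes a genuinely different route from the paper's. The paper argues by circumscription: after choosing coordinates so that $B$ is supported at $0$ by a coordinate hyperplane, it encloses each $B_k$ in its bounding box $P_k$ (the parallelotope circumscribing $B_k$ with faces parallel to the coordinate hyperplanes), so that $P_k^\circ\subset B_k^\circ$ by inclusion-reversal of polarity. Writing $m_i^{(k)}<0<M_i^{(k)}$ for the extreme $i$-th coordinates of $B_k$, the polar $P_k^\circ$ is the polytope with vertices $e_i/M_i^{(k)}$ and $e_i/m_i^{(k)}$ on the coordinate axes, whose volume is the explicit product $\frac{1}{n!}\prod_i\bigl(1/M_i^{(k)}-1/m_i^{(k)}\bigr)$; one factor blows up (the one for the coordinate hyperplane supporting $B$ at $0$) while the others converge to positive limits, so $|B_k^\circ|\geq|P_k^\circ|\to\infty$. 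You instead work inside $B_k^\circ$ directly: support-function convergence $h_{B_k}(u)\to h_B(u)=0$ puts the far point $Ru$ in $B_k^\circ$ for $k$ large, a fixed transverse base $E$ lies in every $B_k^\circ$, and the cone $\operatorname{conv}(E\cup\{Ru\})\subset B_k^\circ$ has volume of order $R\,|E|_{n-1}$. Both proofs are elementary and exploit the same mechanism (the support of $B_k$ in the direction $u$ tends to $0$, so the polar stretches without bound along $u$); the paper's buys a closed-form lower bound with no compactness discussion, while yours avoids circumscribed parallelotopes altogether and only uses standard convergence of support functions.

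Two caveats. First, your worry about $0$ being a ``corner'' is unfounded, and the uniform base needs no such hypothesis: since $B_k\to B$ and $B$ is bounded, there is $\rho>0$ such that every $B_k$ lies in the Euclidean ball of radius $1/\rho$ about the origin, hence $B_k^\circ$ contains the Euclidean ball of radius $\rho$ for \emph{all} $k$; taking $E$ to be the $(n-1)$-disc of radius $\rho$ in $u^\perp$ gives, for each $R$ and all large $k$,
\begin{equation*}
 |B_k^\circ|\;\geq\;\bigl|\operatorname{conv}(E\cup\{Ru\})\bigr|\;=\;\tfrac{1}{n}\,\varepsilon_{n-1}\rho^{n-1}R,
\end{equation*}
and letting $R\to\infty$ finishes the proof. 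Second, the alternative you mention at the end does not work: Santal\'o's properness concerns the functional $x\mapsto|(\K-x)^\circ|$ for a \emph{fixed} body $\K$, whereas here the bodies themselves vary with $k$; the joint lower semicontinuity of $(\K,x)\mapsto|(\K-x)^\circ|$ that this shortcut would require is essentially the content of the proposition being proved (it is precisely what the paper adds to Santal\'o's result to obtain continuity of the Santal\'o point in Proposition~\ref{prop_affine_invariant}), so invoking it would be circular.
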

\begin{proof}Given a convex body $K$, let us say that a parallelotope circumscribes $K$ if $K\subset P$ and every face of $P$ is contained in a supporting hyperplane of $K$.
Without loss of generality assume that $B$ is supported by a coordinate hyperplane  at $0$. Let $P_k$ (resp. $P$) be the parallelotope circumscribing $B_k$ (resp. $B$), and with all faces parallel to the coordinate hyperplanes. Then  $P_n^\circ$ is a sequence of polytopes with vertices contained in the coordinate lines. One of these vertices tends to infinity while the others converge to points different from the origin. It follows that
 \[
  |B_n^\circ|\geq |P_n^\circ|\to\infty.
 \]
\end{proof}

\subsection{A new affine-invariant point}\label{sec:affineinvariantpoint}

Let us start by recalling the definition of an {\it affine-invariant point} (see \cite{Grun63}): this is a map $f : {\pazocal K}(\R^n)\to \R^n$ where ${\pazocal K}(\R^n)$ denotes the space of convex bodies in $\R^n$  satisfying the following two conditions
\begin{itemize}
\item for every inversible affine map $\Phi : \R^n \to \R^n$ and every convex body $\B \in  {\pazocal K}(\R^n)$, one has $f(\Phi(\B)) = \Phi(f(\B))$.
\item $f$ is continuous with respect to the Hausdorff metric.
\end{itemize}

We will prove the following.

\begin{proposition}\label{prop_affine_invariant}
The map $f_{\SS} : {\pazocal K}(\R^n)\to \R^n$ defined by $f_{\SS}(\B)=\SS_{\B}(\B)$ is an affine-invariant point.
\end{proposition}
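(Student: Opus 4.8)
The plan is to verify separately the two defining properties of an affine-invariant point: equivariance under invertible affine maps, and continuity for the Hausdorff metric. Throughout write $g_{\B}(x)=\area_{\B-x}(\partial(\B-x))$ for the functional of Theorem \ref{th:santaloK=B}, so that $\SS_{\B}(\B)$ is its unique minimizer on $\inter{\B}$.

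\emph{Equivariance.} The key input is the behaviour of the Holmes-Thompson area under a linear isomorphism $L$ of $V$. First I would prove that $\area_{L\C}(\partial(L\C))=\area_{\C}(\partial\C)$ for every convex body $\C$. This follows from the symplectic formula \eqref{eq:symplecticHT}: the map $(x,p)\mapsto(Lx,(L^\ast)^{-1}p)$ is the cotangent lift of $L$, hence a symplectomorphism of $T^\ast V$ preserving $\omega_V$, and since $(L\C)^\ast=(L^\ast)^{-1}\C^\ast$ it carries $\partial\C\times\partial\C^\ast$ onto $\partial(L\C)\times\partial((L\C)^\ast)$; a change of variables in \eqref{eq:symplecticHT} then gives equality of the two areas. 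Combining this with the translation-invariance of the Finsler area (the metric $F_{\C}$ depends only on the tangent vector), I get for any invertible affine $T=L(\cdot)+a$ that $T\B-Tx=L(\B-x)$, whence $g_{T\B}(Tx)=\area_{L(\B-x)}(\partial(L(\B-x)))=g_{\B}(x)$ for all $x\in\inter{\B}$. Since $T$ is a bijection $\inter{\B}\to\inter{T\B}$, the minimizer transforms by $T$, yielding $\SS_{T\B}(T\B)=T(\SS_{\B}(\B))$.

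\emph{Continuity, setup.} Let $\B_k\to\B$ in the Hausdorff metric and set $s_k=\SS_{\B_k}(\B_k)$, $s=\SS_{\B}(\B)$. The bodies $\B_k$ lie in a fixed ball, so $\{s_k\}$ is bounded and it suffices to show $s_k\to s$. Fix an interior point $c$ of $\B$; it is interior to $\B_k$ for large $k$, and Remark \ref{rem:cvHT} gives $g_{\B_k}(c)\to g_{\B}(c)<\infty$. Hence the minimum values satisfy $g_{\B_k}(s_k)\le g_{\B_k}(c)$ and are uniformly bounded above.

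\emph{The crux: minimizers stay interior.} The main obstacle is to rule out $s_k$ approaching $\partial\B$, and I would handle it through the uniform properness furnished by the isoperimetric inequality. Applying Proposition \ref{prop:Isop} with $\K=\B_k-s_k$ (which contains the origin since $s_k\in\inter{\B_k}$) and metric body $\B_k$ gives $g_{\B_k}(s_k)=\area_{\B_k-s_k}(\partial\B_k)\ge C_n\,\vol_{\B_k-s_k}(\B_k)^{(n-1)/n}$, where $\vol_{\B_k-s_k}(\B_k)=|\B_k|\,|(\B_k-s_k)^\circ|/\eps_n$. If some subsequence $s_{k_j}\to p\in\partial\B$, then $\B_{k_j}-s_{k_j}\to\B-p$ with $0\in\partial(\B-p)$, so Proposition \ref{prop:paral} forces $|(\B_{k_j}-s_{k_j})^\circ|\to\infty$, whence $g_{\B_{k_j}}(s_{k_j})\to\infty$, contradicting the uniform upper bound. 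Thus $\{s_k\}$ stays in a compact subset of $\inter{\B}$.

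\emph{Identifying the limit.} For any convergent subsequence $s_{k_j}\to s_\infty\in\inter{\B}$ and any $x\in\inter{\B}$ (interior to $\B_{k_j}$ for large $j$), I pass to the limit in $g_{\B_{k_j}}(s_{k_j})\le g_{\B_{k_j}}(x)$ using Remark \ref{rem:cvHT}, which covers the simultaneous convergence of the metric body and of the boundary, to obtain $g_{\B}(s_\infty)\le g_{\B}(x)$. Hence $s_\infty$ minimizes $g_{\B}$, and the uniqueness in Theorem \ref{th:santaloK=B} gives $s_\infty=s$. Since every convergent subsequence of the bounded sequence $\{s_k\}$ has the same limit $s$, the whole sequence converges to $s$, which completes the proof of continuity and hence of the proposition.
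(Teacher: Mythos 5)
Your proposal is correct and uses essentially the same approach and the same key ingredients as the paper: equivariance via the symplectic formula \eqref{eq:symplecticHT} together with translation invariance, and continuity via Remark \ref{rem:cvHT} combined with Propositions \ref{prop:Isop} and \ref{prop:paral} (to force blow-up of the area when minimizers approach the boundary) and the uniqueness of the minimizer from Theorem \ref{teo:convex_proper}. The only difference is organizational: the paper packages the boundary blow-up as continuity of an extended functional $F_{\SS}$ with values in $]0,+\infty]$ and invokes the Blaschke selection theorem, whereas you run the same ingredients through a direct bounded-subsequence argument.
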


\begin{proof}
Using  \eqref{eq:symplecticHT}  we easily check that given any pair $\K,\B \in{\pazocal K}(\R^n)$ we have $\area_{L\K}(\partial L\B)=\area_{\K}(\partial \B)$  for every invertible linear map $L\colon \R^n\to \R^n$. So if $T\colon \R^n\to \R^n$ is  an invertible affine map, we get that
  \[
    \area_{\B-x}(\partial \B) =\area_{T\B-T(x)}(\partial \,T\B)
  \]
  for all $x \in\inter{\B}$. Our Santal\'o point thus satisfies the equality
  \[
    \SS_{T\B}(T\B)=T(\SS_{\B}(\B))
  \]
from which we deduce that the map $f_{\SS}$ fulfills the first condition.

In order to check the second condition, we argue as follows. First define a function 
$$
F_{\SS} : {\pazocal K}(\R^n)\times  \R^n \to ]0,+\infty]
$$ 
by setting $F_{\SS}(\B,x)=\area_{\B-x}(\partial \B)$ if $x \in \inter{\B}$ and $F_{\SS}(\B,x)=+\infty$ otherwise. Observe that this function is continuous. Indeed, since $F_{\SS}(\B,x)=F_{\SS}(\B-x,0)$, we only need to check continuity with respect to $B$. As sequentially continuous maps defined on metric spaces are continuous, this follows from Remark \ref{rem:cvHT} and Propositions \ref{prop:Isop} and \ref{prop:paral}. 

Given a convex body $\B$, fix $\K_0 \in {\pazocal K}(\R^n)$ such that $B \subset \inter{\K_0}$. By the Blaschke selection theorem, the subset $V:=\{\K\in {\pazocal K}(\R^n) \mid \K \subset \K_0\} $ is a compact neighbourhood of $\B$ in the Hausdorff topology (cf. \cite[Theorems 1.8.4 and 1.8.6]{Schn13}). Let now $\{\B_n\}_n$ be a sequence of convex bodies  in $V$ converging to $\B$, and consider  the sequence $\{f_{\SS}(B_n)\}_n$, which is contained in $\K_0$ as each $B_n\in V$. Let us show that $\{f_{\SS}(B_n)\}_n$ converges to $f_{\SS}(B)$. By contradiction, suppose that there exists an open neighborhood $N$ of $f_{\SS}(\B)$ and a subsequence of $\{f_{\SS}(B_n)\}_n$ contained in $\K_0\setminus N$. This set being compact, there exists a subsequence $\{f_{\SS}(B_{n_k})\}_k$ converging to some point $x\neq f_{\SS}(\B)$. Because $F_{\SS}(\B_n,\SS_{\B_n}(\B_n))\leq F_{\SS}(\B_n,y)$ for all $y \in\R^n$, we deduce by continuity that $F_{\SS}(\B,x)\leq F_{\SS}(\B,y)$ for all $y \in\R^n$. Then both $x$ and $f_{\SS}(B)$ would be minimizers of the function $F_{\SS}(B,\cdot)$, thus contradicting Theorem \ref{teo:convex_proper}. Therefore the sequence $\{f_{\SS}(B_n)\}_n$ necessarily converges to $f_{\SS}(\B)$. This proves the continuity of the map $f_{\SS}$ and concludes the proof.
\end{proof}

\subsection{Further study of the strict convexity}
 
Next, we describe an optimal property on $\B$ that ensures strict convexity for any convex body $\K$.
For this, let us define for any $L \subset \Gr[1]{\R^n}$ the subset
\[
  \Star{L} \coloneqq \{H \in \Gr{\R^n}\mid L\subset H\}.
\]

\begin{proposition}\label{prop_all_K}
  Let $\B$ be a convex body. The map $x\in \inter{\K}\mapsto\area_{\K-x}(\partial \B)$ is strictly convex for any convex body $\K$ if and only if $\supp{\mu_{\partial \B}} \cap \Star{L} \neq \emptyset $ for any $L \in \Gr[1]{\R^n}$.
\end{proposition}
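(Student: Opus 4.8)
The plan is to read off both implications from Proposition \ref{str-conv-ab}, which says that strict convexity for a given $\B$ and $\K$ is equivalent to $\supp{\mu_{\partial \B}}$ meeting $\Strcvx[\K]{x_1,x_2}$ for all $x_1\neq x_2$ in $\inter{\K}$. The sufficiency direction is then immediate: assuming $\supp{\mu_{\partial \B}}\cap\Star{L}\neq\emptyset$ for every line $L$, I would take an arbitrary $\K$ and $x_1\neq x_2$, set $L=\vect{x_2-x_1}$, and use the observation preceding Lemma \ref{lemma:realcyl} that every hyperplane through $x_2-x_1$ lies in $\Strcvx[\K]{x_1,x_2}$, i.e. $\Star{L}\subseteq\Strcvx[\K]{x_1,x_2}$. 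Intersecting with $\supp{\mu_{\partial \B}}$ and invoking Proposition \ref{str-conv-ab} finishes this half.

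For the converse I would argue by contraposition: given a line $L$ with $\supp{\mu_{\partial \B}}\cap\Star{L}=\emptyset$, I will build a single convex body $\K$ and a pair $x_1\neq x_2$ with $\supp{\mu_{\partial \B}}\cap\Strcvx[\K]{x_1,x_2}=\emptyset$, so that strict convexity fails by Proposition \ref{str-conv-ab}. Normalize $L=\vect{e_n}$ and write points as $(y,s)\in\R^{n-1}\times\R$; a hyperplane $H$ not containing $L$ is a graph $s=\ell_H(y)$, and its slope $\|\ell_H\|$ blows up exactly as $H$ tends to $\Star{L}$. The crucial point is that $\supp{\mu_{\partial \B}}$ is a closed, hence compact, subset of $\Gr{\R^n}$ disjoint from the compact set $\Star{L}$, so the two sit at positive distance; every $H\in\supp{\mu_{\partial \B}}$ is therefore transverse to $L$ with uniformly bounded slope $\|\ell_H\|\le M$.

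I would then take $\K=C_0\times[0,a]$, the cylinder over a small Euclidean ball $C_0\subset\{x_n=0\}$ of radius $r$ centred at the origin, together with $x_1=c\,e_n$, $x_2=d\,e_n$ for $0<c<d<a$. A direct computation gives
\[
  H\cap(\K-t\,e_n)=\{(y,\ell_H(y)):|y|\le r,\ -t\le\ell_H(y)\le a-t\},
\]
and as soon as $r\,\|\ell_H\|\le\min(c,a-d)$ the two inequalities become vacuous for all $y\in C_0$ and all $t\in[c,d]$, making this slice — and hence $f_{\K}(H,t\,e_n)$ — constant in $t$, so that $H\notin\Strcvx[\K]{x_1,x_2}$. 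Fixing parameters with $rM\le\min(c,a-d)$ (say $a=4$, $c=1$, $d=3$, $r\le 1/M$) forces every $H$ with $\|\ell_H\|\le M$, in particular every $H\in\supp{\mu_{\partial \B}}$, to avoid $\Strcvx[\K]{x_1,x_2}$, which is exactly what I want.

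The only routine loose ends are checking that $x_1,x_2\in\inter{\K}$ are distinct, verifying the slice identity, and noting that a constant (hence affine) $f_{\K}(H,\cdot)$ really does exclude $H$ from $\Strcvx[\K]{x_1,x_2}$. The one substantive step — and the place I expect the real work to be — is the compactness argument: it is precisely the failure of the star condition, promoted from emptiness to a positive distance from $\Star{L}$, that bounds all relevant slopes at once and lets a single cylinder swallow the entire support $\supp{\mu_{\partial \B}}$ inside its region of constant slices.
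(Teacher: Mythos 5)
Your proof is correct and takes essentially the same route as the paper: sufficiency is read off Proposition \ref{str-conv-ab} exactly as the paper does, and for necessity the paper likewise constructs a finite cylinder in direction $L$ and uses compactness of the closed set disjoint from $\Star{L}$ (your uniform slope bound, in coordinates) to make every slice by a hyperplane in $\supp{\mu_{\partial \B}}$ translation-invariant along a segment in direction $L$, hence outside $\Strcvx[\K]{x_1,x_2}$. The only cosmetic difference is packaging: the paper abstracts the construction into a neighborhood-base statement (Lemma \ref{open-neighborhood}) and applies it to $U=\comp{\supp{\mu_{\partial \B}}}$, whereas you instantiate the same cylinder argument directly with a ball cross-section and explicit parameters.
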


\begin{proof}
Recall that as already observed, for a fixed $H$ and $\K$, we know by \cite{San49} that for any $x_0\in \inter{\K}$ the map $x\mapsto f_{\K}(H,x)$ is strictly convex on $(x_0+H)\cap\inter{\K}$. In particular, for any $x_1\neq x_2 \in \inter{\K}$, we have that $\Star{\vect{x_2-x_1}} \subset \Strcvx[\K]{x_1,x_2}$. So if $\supp{\mu_{\partial \B}} \cap \Star{L} \neq \emptyset $ for any $L \in \Gr[1]{\R^n}$, we directly get that $\supp{\mu_{\partial \B}} \cap\Strcvx[\K]{x_1,x_2} \neq \emptyset$ for any convex body $\K$ and $x_1\neq x_2 \in \inter{\K}$. By Proposition \ref{str-conv-ab}, it ensures that the map $x\in \inter{\K}\mapsto\area_{\K-x}(\partial \B)$ is always strictly convex.

Now suppose that $\supp{\mu_{\partial \B}} \cap \Star{L} = \emptyset$ for some $L \in \Gr[1]{\R^n}$.

\begin{lemma}\label{open-neighborhood}
  The family of open sets $\{\Strcvx[\K]{x_1,x_2}\}_{(\K,x_1,x_2)}$ where $\K$ runs over convex bodies and $x_1,x_2$ are any pair of distinct points of $\inter{\K}$ such that $\vect{x_2-x_1}=L$ is an open neighborhood base of $\Star{L}$.
\end{lemma}

\begin{proof}[Proof of the Lemma]
Fix an open neighborhood $U$ of $\Star{L}$  in $\Gr{\R^n}$. We construct a convex body $\K$ such that for some $x_1\neq x_2 \in\inter{\K}$ with $\vect{x_2-x_1}=L$ we have $\Star{L}\subset  \Strcvx[\K]{x_1,x_2}\subset U$.
\begin{center}
  \includegraphics[width=10cm]{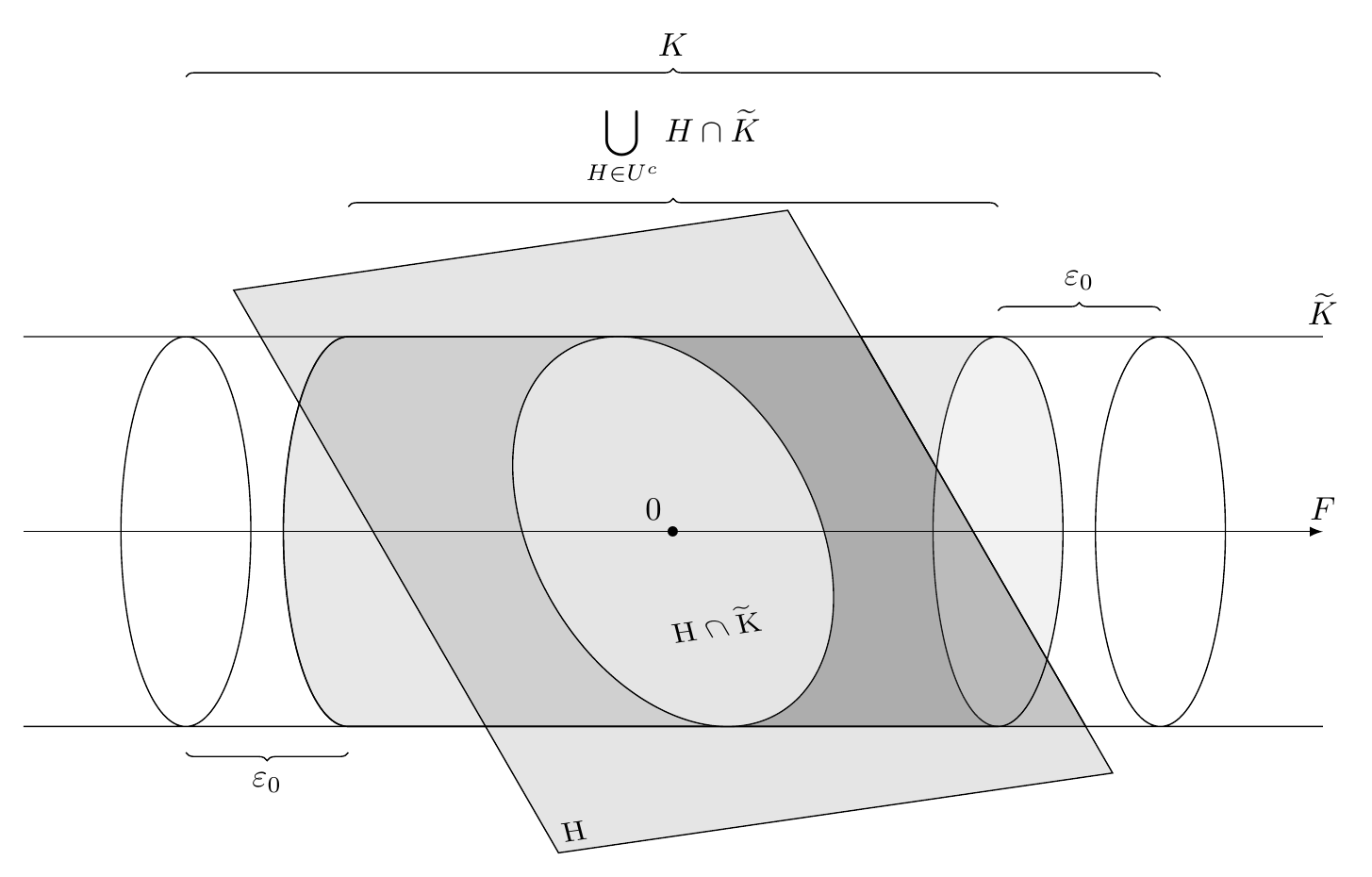}
\end{center}
For this consider the (possibly empty) compact set $\comp{U}$ and let $\widetilde\K$ be any infinite cylinder  with direction $L$, with  a compact convex base and that contains $0$ in its interior. Now fix $\varepsilon_{0} > 0$ and a unit vector $v \in L$. As $\comp{U}$ is compact and disjoint from $\Star{L}$, we can consider a finite subcylinder $\K$ of $\widetilde\K$ delimited by two affine hyperplanes whose underlying directions are not in $\Star{L}$ and such that $(H+\varepsilon v)\cap \widetilde\K = (H+\varepsilon v)\cap \K$ for any $H \in \comp{U}$ and any $|\varepsilon| \leq \varepsilon_{0}$. By taking $x_{1}=-\varepsilon_{0}v$ and $x_{2}=\varepsilon_{0}v$ we have that for any $H \in \comp{U}$, the map $x \mapsto H\cap(\K-x)$ is constant on $[x_1,x_2]$ and so $H \notin \Strcvx[\K]{x_1,x_2}$.
\end{proof}

Lemma \ref{open-neighborhood} applied to the open neighborhood $U \coloneqq \comp{\supp{\mu_{\partial \B}}}$ implies that there exist $\K$ and $x_{1}\neq x_{2}\in \inter{\K}$ such that $\Strcvx[\K]{x_1,x_2}\subset \comp{\supp{\mu_{\partial \K}}}$ and by Proposition \ref{str-conv-ab} we are done.
\end{proof}

To conclude this section, we describe an optimal property on $\K$ that ensures strict convexity for any convex body $\B$.

\begin{proposition}\label{prop_all_B}
  Then the map $x\mapsto\area_{\K-x}(\partial \B)$ is strictly convex on $\inter{\K}$ for any convex body $\B$ if and only if $\K$ has no cylindrical directions, that is $\cyl{\K} = \emptyset$.
\end{proposition}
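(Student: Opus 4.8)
The plan is to read the statement entirely through the criterion of Proposition~\ref{str-conv-ab}, which says that $x\mapsto\area_{\K-x}(\partial\B)$ is strictly convex on $\inter{\K}$ if and only if $\supp{\mu_{\partial\B}}\cap\Strcvx[\K]{x_1,x_2}\neq\emptyset$ for every $(x_1,x_2)\in\cyl{\K}$. The forward implication is then immediate: if $\cyl{\K}=\emptyset$ this condition is \emph{vacuously} satisfied for every convex body $\B$, so the functional is strictly convex for all $\B$.

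For the converse I argue by contraposition: assuming $\cyl{\K}\neq\emptyset$, I will manufacture a polytopal body $\B$ whose surface area measure misses some $\Strcvx$. Fix $(x_1,x_2)\in\cyl{\K}$. By Lemma~\ref{lemma:realcyl} there are a hyperplane direction $H_0\in\Gr{\R^n}$ and a convex body $C\subset H_0$ with $(H_0+[x_1,x_2])\cap\K=C+[x_1,x_2]$; thus $\K$ contains the finite cylinder $Z=C+[x_1,x_2]$ with axis direction $v=x_2-x_1$. The first technical step is to gain room: pick $x_1'\neq x_2'$ in the \emph{open} segment $(x_1,x_2)$. Since the slices $H_0\cap(\K-x)$ are still constant for $x\in[x_1',x_2']$, the pair $(x_1',x_2')$ is again cylindrical, but now a definite portion of $Z$ sits below $x_1'$ and above $x_2'$.

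The crux is to show that $\comp{\Strcvx[\K]{x_1',x_2'}}$ contains a whole open neighborhood $\mathcal{N}$ of $H_0$ in $\Gr{\R^n}$. Here I would use that a hyperplane $H$ whose normal lies within a small angle of the normal $u_0$ of $H_0$ cuts the \emph{infinite} cylinder $C+\R v$ in sections that are exact translates of one another as the cutting plane is translated along $v$ (this is just the $v$-invariance of the infinite cylinder). For $H$ close enough to $H_0$ these tilted sections, translated over the shorter segment $[x_1',x_2']$, remain trapped inside the finite cylinder $Z\subset\K$ thanks to the slack created above, so $(H+x)\cap\K=(H+x)\cap Z$ is a translate of a fixed set for all $x\in[x_1',x_2']$, forcing $H\in\comp{\Strcvx[\K]{x_1',x_2'}}$. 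This quantitative trapping estimate — the tilt must be smaller than the slack divided by the diameter of $C$ — is the main obstacle and the only place a genuine computation is needed; it is exactly where the shrinking step is used, since without slack an arbitrarily small tilt lets the section escape through the caps of $Z$ near the endpoints.

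Once the open neighborhood $\mathcal{N}\subset\comp{\Strcvx[\K]{x_1',x_2'}}$ of $H_0$ is secured, the conclusion is soft. Under the double cover $u\mapsto u^\perp$ the set of admissible normals $\{u\in\S^{n-1}:u^\perp\in\mathcal{N}\}$ is an open neighborhood of the antipodal pair $\pm u_0$, hence a symmetric open subset of the sphere, and any such set positively spans $\R^n$: for small $\varepsilon>0$ the vectors $u_0\pm\varepsilon e_i$ (with $e_i$ a basis of $u_0^\perp$) together with $-u_0$ all lie in it, and since $(u_0+\varepsilon e_i)+(-u_0)=\varepsilon e_i$ and $(u_0-\varepsilon e_i)+(-u_0)=-\varepsilon e_i$, their conic hull is all of $\R^n$. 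Choosing finitely many distinct such normals that still positively span and applying the classical Minkowski existence theorem (see \cite{Schn13}) yields a convex polytope $\B$, which we translate so that $0\in\inter{\B}$, whose facet normals are exactly these vectors. Then $\supp{\mu_{\partial\B}}$ is the finite set of corresponding hyperplane directions, all lying in $\mathcal{N}$, so $\supp{\mu_{\partial\B}}\cap\Strcvx[\K]{x_1',x_2'}=\emptyset$ with $(x_1',x_2')\in\cyl{\K}$. By Proposition~\ref{str-conv-ab} the functional $x\mapsto\area_{\K-x}(\partial\B)$ fails to be strictly convex for this $\B$, which completes the contrapositive.
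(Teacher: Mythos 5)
Your proof is correct and takes essentially the same route as the paper: sufficiency is vacuous via Proposition \ref{str-conv-ab}, and for necessity both arguments shrink the cylindrical segment to a proper subsegment $[x_1',x_2']$, use the same trapping argument to show that $\comp{\Strcvx[\K]{x_1',x_2'}}$ contains an open neighborhood of $H_0$, and then take for $\B$ a polytope whose finitely many facet directions all lie inside that neighborhood. The only divergence is the final construction, where you invoke Minkowski's existence theorem for $2n-1$ positively spanning normals, while the paper more simply picks $n$ linearly independent hyperplane directions in that open set and lets $\B$ be a parallelotope with exactly those face directions, so no existence theorem is needed.
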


\begin{proof}
The fact that the condition $\cyl{\K} = \emptyset$ is  sufficient follows directly from \ref{str-conv-ab}. To prove that it is also necessary, let $K$ be a convex body with $\Strcvx[\K]{x_1,x_2}\neq \Gr{\R^n}$ for some cylindrical pair $(x_1,x_2)$. We will construct $B$ such that $\supp{\mu_{\partial \B}} \cap \Strcvx[\K]{x_1,x_2} = \emptyset$. For this we need the following lemma.
\begin{lemma}
  If $\comp{\Strcvx[\K]{x_1,x_2}}$ is nonempty, then it has a nonempty interior.
\end{lemma}
\begin{proof}
  According to Lemma \ref{lemma:realcyl}, if $\Strcvx[\K]{x_1,x_2}\neq \Gr{\R^n}$, we can find a hyperplane $H_0\in \Gr{\R^n}$ and a convex body $C_0$ of $H_0$ such that $H_0\cap(\K-x)=C_0$ for any $x \in [x_1,x_2]$. Fix any proper subinterval $[x'_1,x'_2]$  of $[x_1,x_2]$, that is $[x'_1,x'_2]\subset ]x_1,x_2[$. We can find an open neighbourhood $U$ of $H_{0}$ such that for any $H \in U$ there is a convex body $C$ of $H$ such that $H\cap(\K-x)=C$ for any $x \in [x'_1,x'_2]$. In particular $H\notin \Strcvx[\K]{x'_1,x'_2}\subset \Strcvx[\K]{x_1,x_2}$ which implies that $\comp{\Strcvx[\K]{x_1,x_2}}$ contains an open neighborhood of $H_{0}$.
\end{proof}
From this lemma, it follows that if $\Strcvx[\K]{x_1,x_2}\neq \Gr{\R^n}$, then $\comp{\Strcvx[\K]{x_1,x_2}}$ contains $n$ hyperplanes defined by linearly independent covectors. So, for any parallelotope $\B$ whose face directions are precisely these $n$ hyperplanes, we have that $\supp{\mu_{\partial{\B}}}\cap\Strcvx[\K]{x_1,x_2}=\emptyset$, and so we can conclude by Proposition \ref{str-conv-ab}.
\end{proof}

\section{Equiaffine differential geometry}\label{sec:equiaffine}

We now introduce classical notions from equiaffine geometry and some notations that will be used to prove Theorem \ref{th:firstvariation} in the next two sections.

From here on, let us suppose the $n$-dimensional vector space $V$ endowed with a non-trivial alternate multilinear $n$-form, which we denote by $\det$.
Let  $M\subset V$ be a hypersurface and take a vector field $\Xi$ transverse to $M$. Then,  for every pair of tangent vector  fields $X,Y$  on $M$, we can decompose the flat connection $\nabla$  of $V$ as
\begin{equation}\label{eq:decomp}
   \nabla_X Y =\nabla^M_X Y +g(X,Y)\cdot\Xi
\end{equation}
where $\nabla^M$ is an affine  torsion-free connection on $M$ and $g$ is a field of symmetric bilinear forms. The hypersurface $M$ is said to be non-degenerate if $g$ is nowhere degenerate, a condition that does not depend on $\Xi$.

\begin{theorem}[\cite{nomizu_sasaki}, Ch.II, Thm.3.1]\label{thm:equiaffine}
  For each non-degenerate oriented hypersurface $M$, there is a unique transversal vector field $\,\Xi$, called  {\em equiaffine normal field} or {\em Blaschke's normal field},  such that 
  \begin{enumerate}
   \item[i)] $\nabla_v \Xi\in T_xM$ for every  $x\in M$ and $v\in T_xM,$
   \item[ii)] the volume $(n-1)$-form $\alpha$ associated to $g$ satisfies
   \[
     \alpha(\xi_1,\ldots,\xi_{n-1})=\det(\Xi(x),\xi_1,\ldots,\xi_{n-1})
   \]
  for every $x\in M$ and $\xi_1,\ldots,\xi_{n-1}\in T_xM$.
  \end{enumerate}
\end{theorem}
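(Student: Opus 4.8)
The plan is to prove existence and uniqueness by reducing both conditions (i) and (ii) to pointwise algebraic and linear equations, after fixing an auxiliary transversal field. First I would fix an arbitrary transverse vector field $\Xi_0$ along $M$ and record from \eqref{eq:decomp} the associated connection $\nabla^0$ and symmetric form $g_0$, together with the transversal connection one-form $\tau_0$ defined as the $\Xi_0$-component of $\nabla_X\Xi_0$. Since any other transversal field has the form $\Xi = \phi\,\Xi_0 + Z$ with $\phi$ a nowhere-vanishing function and $Z$ a tangent field, the whole problem becomes that of determining the pair $(\phi, Z)$ for which (i) and (ii) hold.

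Next I would derive the transformation laws. Substituting $\Xi_0 = \phi^{-1}(\Xi - Z)$ into \eqref{eq:decomp} and separating the tangential and transversal parts yields $g = g_0/\phi$; in particular the volume form of $g$ depends on $\phi$ alone and not on $Z$. Writing $\nabla_X\Xi = (X\phi)\Xi_0 + \phi\,\nabla_X\Xi_0 + \nabla_X Z$ and extracting the $\Xi_0$-component — using \eqref{eq:decomp} to see that the transversal part of $\nabla_X Z$ is $g_0(X,Z)\Xi_0$ — gives transversal component $\bigl(X\phi + \phi\,\tau_0(X) + g_0(X,Z)\bigr)\Xi_0$. Hence condition (i) is equivalent to the linear equation $g_0(\cdot, Z) = -\,d\phi - \phi\,\tau_0$ on each tangent space.

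These two computations decouple the unknowns, and I would exploit this. For condition (ii), write $\alpha_0 = \psi_0\,\theta_0$ where $\theta_0 = \det(\Xi_0, \cdot\,)$ and $\psi_0$ is the factor comparing the $g_0$-volume form with the $\det$-induced one. The scalings $\alpha = |\phi|^{-(n-1)/2}\alpha_0$ (from $g = g_0/\phi$ in dimension $n-1$) and $\det(\Xi, \cdot\,) = \phi\,\theta_0$ (the tangent part $Z$ contributing nothing) turn (ii) into the pointwise equation $\phi^{(n+1)/2} = \psi_0$. Orientation and non-degeneracy ensure $\psi_0 > 0$, so this has the unique positive smooth solution $\phi = \psi_0^{2/(n+1)}$. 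With $\phi$ thus fixed, the non-degeneracy of $g_0$ makes the linear system from (i) uniquely solvable for $Z$. This produces a transversal field $\Xi$ satisfying both conditions, and uniqueness is automatic since any solution must carry exactly this $(\phi, Z)$; independence of the auxiliary choice $\Xi_0$ then follows from uniqueness alone.

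I expect the main obstacle to be the careful bookkeeping of orientations and signs in the volume-form comparison underlying (ii): one must verify that the comparison factor $\psi_0$ between the $g$-volume and $\det(\Xi_0, \cdot\,)$ is positive so that a real root exists, and confirm that the half-integer power $(n+1)/2$ introduces no ambiguity once $\phi$ is constrained to positive values. By contrast, the tangential equation from (i) is a routine consequence of the non-degeneracy of $g$.
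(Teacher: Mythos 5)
Your proposal is correct and follows essentially the same route as the proof of this theorem in the cited reference (Nomizu--Sasaki, Ch.~II, Thm.~3.1), which the paper quotes without reproving: fix an auxiliary transversal $\Xi_0$, derive the transformation laws showing that condition (ii) determines the scaling $\phi$ pointwise and that condition (i) then determines the tangential part $Z$ uniquely by non-degeneracy of $g_0$. The one inaccuracy is the claim that orientation ensures $\psi_0>0$ for an \emph{arbitrary} auxiliary field $\Xi_0$ --- it does not (replace $\Xi_0$ by $-\Xi_0$) --- but this is harmless: either normalize $\Xi_0$ at the outset so that $\det(\Xi_0,\xi_1,\ldots,\xi_{n-1})>0$ on positively oriented tangent bases, or note that condition (ii) actually reads $\phi\,|\phi|^{(n-1)/2}=\psi_0$ before assuming $\phi>0$, and $\phi\mapsto\phi\,|\phi|^{(n-1)/2}$ is a bijection of $\R\setminus\{0\}$ onto itself, so a unique smooth nonvanishing solution exists for either sign of $\psi_0$.
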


 The pseudo-Riemannian metric $g$ and its volume form $\alpha$ are called {\em equiaffine metric}  and {\em equiaffine area measure} respectively.   We assume from here on that $g$ is Riemannian.

We will denote by $E=E(M)$ the frame bundle of $M$ whose fiber over $x\in M$ is
\[
  E_x=\{(x, \xi_1,\ldots, \xi_{n-1})\in M\times V^{n-1}\colon  \vect{\xi_1,\ldots ,\xi_{n-1}}=T_xM \}.
\]
Given $(x, \xi_1,\ldots,\xi_{n-1})\in E$, items i) and ii) in Theorem \ref{thm:equiaffine} read
\begin{equation}\label{eq:conditions}
  \det(\nabla\Xi,\xi_1,\ldots,\xi_{n-1})=0,\quad \det(\Xi(x),\xi_1,\ldots,\xi_{n-1})^{2}=\det (g(\xi_i,\xi_j))_{i,j}.
\end{equation}

\begin{definition}\label{def:L}
  Given an oriented hypersurface $M\subset V$ and $\xi\in E$, let for any $i,j=1,\ldots,n-1$
  \[
    L_{i,j}(\xi) \coloneqq \det(\nabla_{\xi_{i}}{X_j},\xi_1,\ldots,\xi_{n-1}),
  \]
  where $X_j$ is any tangent vector field with ${X_j}(x)=\xi_j$. It is easy to check that $L_{i,j}=L_{j,i}$ and does not depend on $X_j$.
  Let $L\in C^\infty(E)$ be the function defined by
  \begin{equation*}
    L(\xi) \coloneqq \det(L_{i,j}(\xi))_{i,j}.
  \end{equation*}
\end{definition}

\begin{proposition}\label{prop:L_to_det}
  For any  $\xi=(x,\xi_1,\ldots, \xi_{n-1})\in E$ we have
  \begin{equation}\label{eq:L_to_det}
    L(\xi)=\det(\Xi(x),\xi_1,\ldots, \xi_{n-1})^{n+1}.
  \end{equation}
\end{proposition}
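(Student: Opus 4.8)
The plan is to reduce the computation of each matrix entry $L_{i,j}(\xi)$ to the equiaffine structure equation \eqref{eq:decomp}, exploiting the fact that the tangential part of the covariant derivative contributes nothing to the determinant that defines $L_{i,j}$. First I would pick, for each $j$, a tangent extension $X_j$ of $\xi_j$ and apply \eqref{eq:decomp} to write
\[
  \nabla_{\xi_i} X_j =\nabla^M_{\xi_i} X_j + g(\xi_i,\xi_j)\,\Xi(x).
\]
Since $\nabla^M_{\xi_i} X_j$ is tangent to $M$, it belongs to $\vect{\xi_1,\ldots,\xi_{n-1}}=T_xM$, hence is a linear combination of $\xi_1,\ldots,\xi_{n-1}$. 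Plugging this into the definition of $L_{i,j}$ from Definition \ref{def:L} and using multilinearity and alternation of $\det$, the tangential term is killed (one of its arguments repeats a column), leaving
\[
  L_{i,j}(\xi)=g(\xi_i,\xi_j)\,\det(\Xi(x),\xi_1,\ldots,\xi_{n-1}).
\]

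Writing $D:=\det(\Xi(x),\xi_1,\ldots,\xi_{n-1})$, this identity says precisely that the $(n-1)\times(n-1)$ matrix $(L_{i,j})_{i,j}$ equals $D$ times the Gram matrix $(g(\xi_i,\xi_j))_{i,j}$. Taking determinants and pulling the scalar $D$ out of each of the $n-1$ rows then yields
\[
  L(\xi)=D^{\,n-1}\,\det\big(g(\xi_i,\xi_j)\big)_{i,j}.
\]

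Finally I would invoke the normalization ii) of the equiaffine normal field, recorded in \eqref{eq:conditions} as $\det(\Xi(x),\xi_1,\ldots,\xi_{n-1})^{2}=\det(g(\xi_i,\xi_j))_{i,j}$, i.e. $\det(g(\xi_i,\xi_j))_{i,j}=D^{2}$. Substituting this into the previous display gives $L(\xi)=D^{\,n-1}\cdot D^{2}=D^{\,n+1}$, which is exactly \eqref{eq:L_to_det}. The argument is a short direct computation rather than one with a genuine obstacle; the only point demanding care is the vanishing of the tangential contribution, which is guaranteed because $\nabla^M_{\xi_i} X_j$ lies in the span of the very vectors $\xi_1,\ldots,\xi_{n-1}$ filling the remaining slots of the determinant. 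One should also note that the formula is independent of the chosen extensions $X_j$, as already observed in Definition \ref{def:L}, so the identity holds pointwise on all of $E$.
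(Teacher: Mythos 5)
Your proof is correct and follows essentially the same route as the paper: both derive $L_{i,j}(\xi)=g(\xi_i,\xi_j)\,\det(\Xi(x),\xi_1,\ldots,\xi_{n-1})$ from the decomposition \eqref{eq:decomp} together with alternation of $\det$, and then evaluate $\det(L_{i,j})_{i,j}$ via the normalization \eqref{eq:conditions}. If anything, your bookkeeping is marginally cleaner: writing $D=\det(\Xi(x),\xi_1,\ldots,\xi_{n-1})$, factoring $D$ out of the matrix, and invoking the squared identity $\det(g(\xi_i,\xi_j))_{i,j}=D^{2}$ avoids the square root $\det(g(\xi_i,\xi_j))_{i,j}^{1/2}$ used in the paper, and hence also the paper's preliminary reduction to a positively oriented frame.
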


\begin{proof}
We can assume $\{\xi_1,\ldots,\xi_{n-1}\}$ to be positively oriented, as both sides of \eqref{eq:L_to_det} are equally affected by a permutation. Indeed, the effect on the matrix $(L_{i,j})_{i,j}$ of a transposition of $\xi_1,\cdots,\xi_{n-1}$ is a sign change of all entries $L_{i,j}$, and a simultaneous permutation of two lines and two columns.
From \eqref{eq:decomp} and \eqref{eq:conditions} we get
\begin{align}
  L_{k,l}(\xi) & =\det(\nabla_{\xi_k} X_l,\xi_1,\ldots,\xi_{n-1})\notag                      \\
               & =g(\xi_k,\xi_l) \det(\Xi(x), \xi_1,\ldots, \xi_{n-1})  \label{eq:L_to_det0} \\
               & =g(\xi_k,\xi_l) \det(g(\xi_i,\xi_j))_{i,j}^{\frac12}.\label{eq:L_to_det1}
\end{align}
It follows using \eqref{eq:conditions} again  that
\begin{equation}\label{eq:L_to_det2}
  L(\xi)=\det(L_{k,l}(\xi))=\det(g(\xi_k,\xi_l))_{k,l}^{1+\frac{n-1}2}=\det(\Xi(x),\xi_1,\ldots, \xi_{n-1})^{n+1}.
\end{equation}
\end{proof}

\begin{remark}
  By \eqref{eq:L_to_det1} and \eqref{eq:L_to_det2}, the equiaffine metric is simply given by
  \[
     g(\xi_i,\xi_j)=|L(\xi)|^{-\frac1{n+1}} L_{i,j}(\xi).
  \]
  In turn, the equiaffine normal vector can be obtained from $g$ as $\Xi=\frac{1}{n-1}\Delta f$ where $\Delta$ is the Laplacian with respect to $g$ and $f\colon M\to V$ is the inclusion (cf. \cite[Thm.6.5, Ch. II]{nomizu_sasaki}). We will not make use of this fact.
\end{remark}

\section{Measure of lines in terms of hyperplanes}\label{sec:pullback}

In this section we use the boundary sphere $\partial \B$ to construct a parametrization space for the oriented affine lines of the dual vector space in terms of intersections of affine hyperplanes. We then show how to rewrite the measure involved in \'Alvarez Paiva formula in this new space.

Recall that we have fixed some non-trivial alternate multilinear $n$-form det on the $n$-dimensional vector space $V$.

Let $\B$ be a convex body of $V$ that contains the origin as an interior point, and suppose $\partial B$ is smooth and positively curved. In particular its dual body is smooth, and the Legendre transform ${\LL} : \partial \B \to \partial \B^*$, uniquely defined by $$\ker\LL(x)=T_x\partial \B\qquad\mbox{ and }\qquad\langle \LL(x),x\rangle=1,$$ is a well defined diffeomorphism. Moreover, since $\partial \B$ has non-degenerate second fundamental form with respect to any Euclidean structure,  Theorem \ref{thm:equiaffine} applies to it. We orient $\partial \B$ with the  inward vector (i.e. as the boundary of $V\setminus \B$) so that the equiaffine metric $g$ is Riemannian. Associated to the hypersurface $\partial \B$ we also have well defined functions $L_{i,j}$ and $L \in C^\infty(E(\partial \B))$ (see Definition \ref{def:L}).

Consider a smooth local section
\begin{align*}
  \zeta\colon U\subset \partial \B & \to E(\partial \B) \\
                                   x & \mapsto\zeta(x)=(x,\zeta_1(x),\ldots,\zeta_{n-1}(x)).
\end{align*}

We define the following diagram

\begin{center}
  \begin{tikzcd}
 (\R\setminus\{0\})^{n-1}\times U\arrow{r}{F} & E(\partial \B) \arrow{dr}{G}\arrow{d}{H} &          \\
                                              & V^*\times \partial \B\arrow{r}{\pi}      & G_+(V^*)
  \end{tikzcd}
\end{center}
where
\begin{itemize}
  \item $G_+(V^*)$ is the space of oriented affine lines in $V^*$ introduced in section \ref{sec:AlvarezFormula};
  \item for $\lambda=(\lambda_1,\ldots,\lambda_{n-1})\in  (\R\setminus\{0\})^{n-1}$ and $x \in U$
    \[
      F(\lambda,x)=(x,\lambda_1 \zeta_1(x),\ldots,\lambda_{n-1} \zeta_{n-1}(x));
    \]
  \item $G(x,\xi_1,\ldots,\xi_{n-1})$ is the affine line
    \[
      \{p\in V^*\colon \langle\xi_1, p\rangle=\cdots=\langle\xi_{n-1},p\rangle=1\}
    \]
    oriented by $\LL(x)$;
  \item for $(x,\xi_1,\cdots,\xi_{n-1}) \in E(\partial \B)$ let $ \{\LL(x),\xi^1,\ldots,\xi^{n-1}\}$ be the dual basis in $V^*$ of the basis $\{x,\xi_1,\ldots, \xi_{n-1} \}$, and set
    \[
      p(x,\xi_1,\ldots,\xi_{n-1})=\sum_{i=1}^{n-1} \xi^i.
    \]
    Then define
    \[
      H(x,\xi_1,\ldots,\xi_{n-1})=(p(x,\xi_1,\ldots,\xi_{n-1}),x);
    \]
  \item $\pi$ is the projection map
    \[
      \pi(p,x)=p+\langle{\LL}(x)\rangle_+.
    \]
\end{itemize}

It is easy to check that the previous diagram commutes.

Indeed observe that $\langle \xi_i,p(x,\xi_1,\ldots,\xi_{n-1})\rangle=1$ and $\langle\xi_i,\LL (x)\rangle=0$ for all $i$, which leads to the commutativity property:
\[
  \pi \circ H(x,\xi_1,\ldots,\xi_{n-1})=p(x,\xi_1,\ldots,\xi_{n-1})+\langle{\LL}(x)\rangle_+=G(x, \xi_1 ,\ldots, \xi_{n-1}).
\]

Recall that $G_+(V^\ast)$ is endowed with a symplectic form $\omega_{\B^\ast}$ (see section \ref{sec:AlvarezFormula}). It will be convenient to consider on $G_+(V^*)$ the following associated volume element
\[
 \eta_{\B^\ast}=(-1)^{\frac{n(n-1)}2} \omega_{\B^\ast}^{n-1}
\]
and take the corresponding orientation. Our goal in this section is to compute  the pull-back form $G^\ast \eta_{\B^\ast}$. For this, we introduce the following $2$-forms.

\begin{definition}
  Let $\omega_1,\ldots,\omega_{n-1}$ be the $2$-forms on $E(\partial \B)$ given by
  \[
    {\omega_i}_{(x,\xi_1,\ldots,\xi_{n-1})}=-\det(d\pi_i,d\pi_i,\xi_1-\xi_i,\ldots,\widehat{\xi_i-\xi_i},\ldots,\xi_{n-1}-\xi_i),
  \]
  where
  \begin{align*}
      \pi_i : E(\partial \B) & \to V \\
    (x,\xi_1,\ldots,\xi_{n-1}) & \mapsto \xi_i.
  \end{align*}
\end{definition}

Using these $2$-forms we are able to express $G^\ast \eta_{\B^\ast}$ as follows.

\begin{proposition}\label{prop:pullback}
  \begin{equation}\label{eq:main_pullback}
    G^*\eta_{\B^\ast} =\frac{(n-1)!}{L} \, \, \omega_1\wedge\cdots\wedge\omega_{n-1}.
  \end{equation}
\end{proposition}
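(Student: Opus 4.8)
The plan is to compute $G^*\eta_{\B^\ast}$ by first understanding the map $G$ concretely in coordinates adapted to the frame bundle, and then comparing the resulting top-form with $\omega_1\wedge\cdots\wedge\omega_{n-1}$. Since the diagram commutes, $G = \pi\circ H$, and $\eta_{\B^\ast}$ is built from the symplectic form $\omega_{\B^\ast}$ characterized by $\pi^*\omega_{\B^\ast}=i^*\omega_{V^*}$. The first step is therefore to pull back $\omega_{V^*}$ (the tautological symplectic form on $T^*V^*\simeq V^*\times V$) along $H$, since $H^*\pi^*\omega_{\B^\ast}=H^*i^*\omega_{V^*}$. Concretely, $H(x,\xi_1,\ldots,\xi_{n-1})=(p,x)$ with $p=\sum_i\xi^i$ the sum of the dual-basis covectors, so I would write $\omega_{V^*}$ as $\sum dp\wedge dx$ in the natural pairing and compute $dp$ in terms of the moving frame. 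The key relations $\langle\xi_i,p\rangle=1$ and $\langle\xi_i,\LL(x)\rangle=0$ differentiate to give linear constraints linking $d\xi_i$, $dp$, and $dx$, which is what lets one express everything through the $\xi_i$'s and their differentials.

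Next I would exploit the fibered structure of $F$. Because $F(\lambda,x)=(x,\lambda_1\zeta_1(x),\ldots,\lambda_{n-1}\zeta_{n-1}(x))$ parametrizes $E(\partial\B)$ over $U$ with fiber coordinates $\lambda_i$, and because $G\circ F$ factors through the line map $G$, the form $G^*\eta_{\B^\ast}$ is a $2(n-1)$-form on a $(2n-2)$-dimensional space, hence a top-form, so it suffices to evaluate both sides of \eqref{eq:main_pullback} on one convenient frame — for instance the frame $(\partial_{\lambda_1},\ldots,\partial_{\lambda_{n-1}},\partial_{x^1},\ldots,\partial_{x^{n-1}})$ in the $(\R\setminus\{0\})^{n-1}\times U$ chart. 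The $2$-forms $\omega_i$ are designed so that each pairs a "vertical" differential (the rescaling of $\xi_i$, i.e. the $d\pi_i$ factor reflecting motion in the $\lambda_i$ direction) against the "horizontal" skew block spanned by the differences $\xi_j-\xi_i$; their wedge $\omega_1\wedge\cdots\wedge\omega_{n-1}$ should reproduce exactly the determinant structure coming from $dp\wedge dx$. The factor $(n-1)!/L$ must then emerge by identifying the Jacobian determinant with $\det(\Xi(x),\xi_1,\ldots,\xi_{n-1})^{-(n+1)}$, invoking Proposition \ref{prop:L_to_det}, which is precisely where $L$ enters.

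The technical engine is the identity $L(\xi)=\det(\Xi(x),\xi_1,\ldots,\xi_{n-1})^{n+1}$ from Proposition \ref{prop:L_to_det}, together with the equiaffine conditions \eqref{eq:conditions}: the symplectic volume $\eta_{\B^\ast}$ carries a normalization tied to the Legendre transform $\LL$ and the affine normal $\Xi$, and the determinant $\det(\Xi(x),\xi_1,\ldots,\xi_{n-1})$ records exactly how the dual basis $\{\LL(x),\xi^1,\ldots,\xi^{n-1}\}$ is calibrated against $\det$. I expect the sign prefactor $(-1)^{n(n-1)/2}$ built into $\eta_{\B^\ast}$ to be what reconciles the orientation of $\omega_{\B^\ast}^{n-1}$ with the ordering of the $\omega_i$'s, so tracking signs carefully will matter.

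The main obstacle, I anticipate, is the bookkeeping in relating the abstract symplectic reduction on $G_+(V^*)$ to the explicit differentials in the frame bundle: one must correctly transport $\omega_{V^*}$ through $H$ while respecting that $p$ depends on $x$ through the dual basis, so $dp$ has a nontrivial horizontal component governed by the connection $\nabla$ on $\partial\B$. Disentangling this horizontal part — and showing that after wedging it collapses into the $L_{i,j}$ matrix whose determinant is $L$ — is the crux. Everything else (the vertical $\lambda_i$-derivatives, the combinatorial $(n-1)!$, the fiberwise homogeneity) is routine multilinear algebra once this coupling is correctly identified.
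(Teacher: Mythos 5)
Your strategy for the directions tangent to the chart is sound and matches the paper's own computation: using $G=\pi\circ H$ together with $\pi^*\omega_{\B^\ast}=i^*\omega_{V^*}$ to pull the symplectic form back through $H$, working in the parametrization $F(\lambda,x)=(x,\lambda_1\zeta_1(x),\ldots,\lambda_{n-1}\zeta_{n-1}(x))$, and invoking Proposition \ref{prop:L_to_det} for the homogeneity of $L$ is exactly how the paper establishes the identity of the pullbacks $F^*G^*\eta_{\B^\ast}=F^*\Omega$, where $\Omega\coloneqq\frac{(n-1)!}{L}\,\omega_1\wedge\cdots\wedge\omega_{n-1}$.

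However, there is a genuine gap. You assert that $G^*\eta_{\B^\ast}$ is ``a $2(n-1)$-form on a $(2n-2)$-dimensional space, hence a top-form,'' so that it suffices to evaluate both sides of \eqref{eq:main_pullback} on the single frame $(\partial_{\lambda_1},\ldots,\partial_{\lambda_{n-1}},\partial_{x^1},\ldots,\partial_{x^{n-1}})$. This is false for $n\geq 3$: both sides of \eqref{eq:main_pullback} are $2(n-1)$-forms on $E(\partial\B)$, whose dimension is $(n-1)+(n-1)^2=n(n-1)$, strictly larger than $2(n-1)$. The chart $(\R\setminus\{0\})^{n-1}\times U$ parametrizes only a $(2n-2)$-dimensional slice of $E(\partial\B)$ (the frames whose $i$-th vector is a multiple of $\zeta_i(x)$), so your computation proves only the pullback identity $F^*G^*\eta_{\B^\ast}=F^*\Omega$, i.e.\ agreement of the two forms on vectors tangent to the image of $F$ --- strictly weaker than the proposition. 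The paper closes this gap with a second step: it identifies $\ker dG_{\xi}$ explicitly as the span of the $(n-1)(n-2)$ vectors $(0,Z_{ij})$, where $Z_{ij}$ has $\xi_j-\xi_i$ in the $i$-th slot, and checks by direct determinant computations that $i_{(0,Z_{ij})}\omega_k=0$ for all $k$, hence $i_X\Omega=0$ for every $X\in\ker dG$. Since $G\circ F$ is a diffeomorphism, one has $\operatorname{im} dF\oplus\ker dG=T_\xi E(\partial\B)$, and since $G^*\eta_{\B^\ast}$ automatically annihilates $\ker dG$, the equality on $\operatorname{im} dF$ propagates to all of $E(\partial\B)$. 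Without this step --- which is precisely where the ``difference'' structure $\xi_j-\xi_i$ built into the definition of the $\omega_i$ earns its keep --- your argument does not prove the stated identity.
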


The rest of this section is devoted to the proof of the proposition above.

\subsection{Technical lemmas}

Let us begin with the following computation.

\begin{lemma}\label{lem:pullback_omega0}
  \[
    (F^\ast G^\ast  \eta_{\B^\ast})_{(\lambda,x)}=\frac{(n-1)!}{\prod_{i=1}^{n-1} \lambda_i^2} \, \, d \lambda_1\wedge\cdots \wedge d \lambda_{n-1}\wedge \zeta^1(x)\wedge\cdots\wedge \zeta^{n-1}(x)
  \]
  where $\{\LL(x),\zeta^1(x),\ldots,\zeta^{n-1}(x)\}$ is the dual basis of $\{x,\zeta_1(x),\ldots,\zeta_{n-1}(x)\}$.
\end{lemma}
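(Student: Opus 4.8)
The goal is to compute the pullback $(F^\ast G^\ast \eta_{\B^\ast})_{(\lambda,x)}$ explicitly. My strategy is to reduce everything to a concrete computation in coordinates adapted to the fixed local section $\zeta$, by exploiting the commutativity of the diagram. Since $\pi \circ H = G$, I would first note that $G^\ast \eta_{\B^\ast} = H^\ast \pi^\ast \eta_{\B^\ast}$, so I can work with the map $H$ into $V^\ast \times \partial \B$ and the pullback of $\eta_{\B^\ast}$ under $\pi$. The symplectic form $\omega_{\B^\ast}$ is characterized by $\pi^\ast \omega_{\B^\ast} = i^\ast \omega_{V^\ast}$, where $\omega_{V^\ast}$ is the standard symplectic form on $T^\ast V^\ast$; this is exactly the kind of identity that makes the computation tractable, because $\omega_{V^\ast} = \sum dp_i \wedge dq^i$ has an explicit form in linear coordinates.

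\textbf{Key steps.} First I would introduce linear coordinates on $V^\ast$ dual to a basis of $V$ built from $x, \zeta_1(x), \ldots, \zeta_{n-1}(x)$, so that the covector $p(x,\xi_1,\ldots,\xi_{n-1}) = \sum \xi^i$ appearing in the definition of $H$ becomes explicit. Composing with $F$, the map $F^\ast H^\ast$ sends the point $(\lambda, x)$ to the pair consisting of the covector $p(x, \lambda_1 \zeta_1(x), \ldots, \lambda_{n-1}\zeta_{n-1}(x))$ and the point $x$. The crucial observation is that scaling $\zeta_i$ by $\lambda_i$ scales the corresponding dual covector $\xi^i$ by $1/\lambda_i$; this is the source of the factor $\prod \lambda_i^{-2}$, one power of $\lambda_i^{-1}$ from the covector position $p$ and one from the differential $dp$ in each of the $n-1$ wedge factors of $\omega_{V^\ast}^{n-1}$. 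Next I would pull back $i^\ast \omega_{V^\ast}^{n-1}$ through this composition, carefully tracking how the differentials $d\lambda_i$ and the base-point differentials (expressible via the $\zeta^i(x)$) enter. The sign convention built into $\eta_{\B^\ast} = (-1)^{n(n-1)/2}\omega_{\B^\ast}^{n-1}$ is designed precisely to make the resulting top form come out with the stated orientation $d\lambda_1 \wedge \cdots \wedge d\lambda_{n-1} \wedge \zeta^1(x) \wedge \cdots \wedge \zeta^{n-1}(x)$, and the combinatorial factor $(n-1)!$ arises from expanding the $(n-1)$-th power of the symplectic form.

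\textbf{Main obstacle.} I expect the principal difficulty to be bookkeeping rather than conceptual: correctly identifying which differentials survive after pulling back through $F$ and $H$, and pinning down the overall sign. The differential of the covector-valued map $p$ mixes the $\lambda$-directions (giving clean $d\lambda_i$ terms, since $p$ depends linearly on each $\lambda_i^{-1}$) with the $x$-directions (where the dependence of $\zeta_i(x)$ and the dual basis on $x$ produces terms proportional to the $\zeta^j(x)$). Sorting out that the cross terms assemble correctly into the antisymmetrized top form, while keeping the $(-1)^{n(n-1)/2}$ sign matched to the orientation on $G_+(V^\ast)$, is where I would spend the most care. Once the scaling behavior in $\lambda$ is isolated and the base-point contribution is expressed through the dual basis $\{\LL(x), \zeta^1(x), \ldots, \zeta^{n-1}(x)\}$, the $(n-1)!/\prod \lambda_i^2$ prefactor and the wedge structure should fall out directly from expanding $\omega_{V^\ast}^{n-1}$.
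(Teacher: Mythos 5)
Your proposal follows essentially the same route as the paper's own proof: reduce $G^\ast\eta_{\B^\ast}$ to $H^\ast\omega_{V^\ast}$ via the commutative diagram and the defining identity $\pi^\ast\omega_{\B^\ast}=i^\ast\omega_{V^\ast}$, write $\omega_{V^\ast}$ in the dual pair of bases $\{x,\zeta_1(x),\ldots,\zeta_{n-1}(x)\}$ and $\{\LL(x),\zeta^1(x),\ldots,\zeta^{n-1}(x)\}$, use $H\circ F(\lambda,x)=\bigl(\sum_i \zeta^i(x)/\lambda_i,\,x\bigr)$ to extract the $\prod_i\lambda_i^{-2}$ scaling from the $d\lambda_i$-directions, and get the factor $(n-1)!$ and the sign from expanding the $(n-1)$-st power of the $2$-form (the cross terms $\zeta^i\wedge\zeta^j$ you worry about die for dimension reasons, since only $n-1$ independent $\zeta^i$'s are available). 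This is correct and matches the paper's argument.
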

\begin{proof}
Note first that
\[
  F^\ast G^\ast \omega_{\B^\ast}=F^\ast H^\ast\pi^\ast\omega_{\B^\ast}=F^\ast H^\ast\omega_{V^*}
\]
where we have abusively denoted $i^\ast \omega_{V^\ast}$ by $\omega_{V^\ast}$ (here $i$ stands for the canonical inclusion $V^\ast \times \partial \B \hookrightarrow V^\ast \times V$).

Fixing $x_0\in U$ we write
\[
  \omega_{V^*}=x_0\wedge \LL(x_0)+\sum_{i=1}^{n-1} \zeta_i(x_0)\wedge \zeta^i(x_0)
\]
globally on $V^*\times V$.

Since $H\circ F(\lambda,x)=(p\circ F(\lambda,x),x)=(\sum_{i=1}^{n-1} \zeta^i(x)/\lambda_i,x)$  we have
\[
  d(H\circ F)_{(\lambda,x)}\left({\partial }/{\partial \lambda_i}\right)=\left(-\zeta^i(x)/{\lambda_i^2},0\right)\qquad \text{and} \qquad d(H\circ F)_{(\lambda,x)}(\zeta_i(x))=(\ast, \zeta_i(x)).
\]
Thus, modulo terms of the form $\zeta^i(x_0)\wedge\zeta^j(x_0)$,
\begin{align*}
  (F^*H^*\omega_{V^*})_{(\lambda,x_0)}
    & \equiv\sum_{i=1}^{n-1} \omega_{V^*}(d(H\circ F)_{(\lambda,x_0)}\left({\partial }/{\partial \lambda_i}\right),d(H\circ F)_{(\lambda,x_0)}(\zeta_i(x_0))) \, d\lambda_i\wedge \zeta^i(x_0) \\
    & = -\sum_{i=1}^{n-1} \frac{1}{\lambda_i^2} \, d\lambda_i\wedge \zeta^i(x_0).
\end{align*} The statement follows.
\end{proof}

Secondly we prove the following identity.

\begin{lemma}\label{lem:pullback_omega}
  \[
    F^\ast(\omega_1\wedge \cdots \wedge \omega_{n-1})_{(\lambda,x)}=  L(\zeta(x)) \, \, \left(\prod_{i=1}^{n-1} \lambda_i^{n-1}\right) \, \, d\lambda_1\wedge\cdots\wedge d\lambda_{n-1}\wedge \zeta^1(x)\wedge \cdots \wedge \zeta^{n-1}(x).
  \]
\end{lemma}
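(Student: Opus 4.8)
The plan is to pull everything back to $(\R\setminus\{0\})^{n-1}\times U$ by $F$, collapse the wedge product to a single surviving term via a degree count, and then recognise the resulting coefficient as $L=\det(L_{i,j})$. First I would record the pullbacks of the maps $\pi_i$. Since $\pi_i\circ F(\lambda,x)=\lambda_i\zeta_i(x)$, differentiation gives $F^\ast d\pi_i(\partial/\partial\lambda_j)=\delta_{ij}\,\zeta_i(x)$ and $F^\ast d\pi_i(v)=\lambda_i\nabla_v\zeta_i$ for $v$ tangent to $\partial\B$, where $\nabla_v\zeta_i$ is the flat directional derivative of the $V$-valued map $\zeta_i$. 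The decisive consequence is that, because $d\pi_i$ occupies both free slots of $\omega_i$, the pullback $F^\ast\omega_i$ vanishes as soon as one of its two arguments is a $\lambda$-direction $\partial/\partial\lambda_j$ with $j\neq i$ (that slot becomes $0$), and it also vanishes if both are $\lambda$-directions.

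Next I would split $F^\ast\omega_i=d\lambda_i\wedge\alpha_i+B_i$, where $\alpha_i$ is a $1$-form on $U$ and $B_i$ a $2$-form on $U$ carrying no $d\lambda$; there is no $d\lambda_i\wedge d\lambda_j$ term by the vanishing just noted. A dimension count then isolates a single term of $F^\ast(\omega_1\wedge\cdots\wedge\omega_{n-1})$: a product of $k$ factors $B_i$ and $(n-1-k)$ factors $d\lambda_i\wedge\alpha_i$ has degree $n-1-k$ in the $d\lambda$ directions and degree $n-1+k$ in the cotangent directions of $U$, and this can be a top form only when $k=0$ (the $U$-degree must not exceed $\dim U=n-1$). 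Hence $F^\ast(\omega_1\wedge\cdots\wedge\omega_{n-1})=\bigwedge_{i=1}^{n-1}(d\lambda_i\wedge\alpha_i)$, and bringing the $d\lambda_i$ to the front produces the sign $(-1)^{(n-1)(n-2)/2}$ together with $d\lambda_1\wedge\cdots\wedge d\lambda_{n-1}\wedge(\alpha_1\wedge\cdots\wedge\alpha_{n-1})$.

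It then remains to compute $\alpha_i$. Evaluating on the tangent frame $\zeta_1,\ldots,\zeta_{n-1}$ of $U$ gives $\alpha_i(\zeta_k)=F^\ast\omega_i(\partial/\partial\lambda_i,\zeta_k)=-\det(\zeta_i,\lambda_i\nabla_{\zeta_k}\zeta_i,w_1,\ldots,\widehat{w_i},\ldots,w_{n-1})$ with $w_j=\lambda_j\zeta_j-\lambda_i\zeta_i$. The key simplification is that $\zeta_i$ already sits in the first slot, so each $w_j$ may be replaced by $\lambda_j\zeta_j$ without altering the determinant; factoring out all the $\lambda$'s leaves $\prod_j\lambda_j$ times $\det(\zeta_i,\nabla_{\zeta_k}\zeta_i,\zeta_1,\ldots,\widehat{\zeta_i},\ldots,\zeta_{n-1})$. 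Swapping the first two columns and moving $\zeta_i$ back to its place turns this into $(-1)^i\det(\nabla_{\zeta_k}\zeta_i,\zeta_1,\ldots,\zeta_{n-1})=(-1)^iL_{k,i}=(-1)^iL_{i,k}$, using symmetry of $L$ and Definition \ref{def:L}. Thus $\alpha_i=(-1)^{i+1}\big(\prod_j\lambda_j\big)\sum_k L_{i,k}\,\zeta^k$, where $\zeta^k$ denotes the coframe on $U$ dual to $\zeta_1,\ldots,\zeta_{n-1}$, as in the statement.

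Finally I would assemble the wedge. The elementary identity $\bigwedge_i\big(\sum_k M_{ik}\zeta^k\big)=\det(M)\,\zeta^1\wedge\cdots\wedge\zeta^{n-1}$ with $M_{ik}=L_{i,k}$ yields $\alpha_1\wedge\cdots\wedge\alpha_{n-1}=\big(\prod_i(-1)^{i+1}\big)\big(\prod_j\lambda_j\big)^{n-1}\det(L_{i,k})\,\zeta^1\wedge\cdots\wedge\zeta^{n-1}$, where $\prod_i(-1)^{i+1}=(-1)^{(n-1)(n-2)/2}$ and $\det(L_{i,k})=L(\zeta)$. Multiplying by the reordering sign, the two factors $(-1)^{(n-1)(n-2)/2}$ cancel since $(n-1)(n-2)$ is even, giving exactly $L(\zeta)\,\big(\prod_i\lambda_i^{n-1}\big)\,d\lambda_1\wedge\cdots\wedge d\lambda_{n-1}\wedge\zeta^1\wedge\cdots\wedge\zeta^{n-1}$. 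The main obstacle is purely bookkeeping: keeping the three sign sources consistent — the column swap $\zeta_i\leftrightarrow\nabla_{\zeta_k}\zeta_i$, the reordering of the $d\lambda_i$, and the product $\prod_i(-1)^{i+1}$ — together with the clean but essential column reduction eliminating the $-\lambda_i\zeta_i$ contributions.
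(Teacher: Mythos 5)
Your proposal is correct and follows essentially the same route as the paper's proof: the same evaluations of $F^\ast\omega_i$ on the frame $\{\partial/\partial\lambda_j,\zeta_k(x)\}$, the same sign $(-1)^{i+1}$ and factor $\prod_j\lambda_j$ arising from the column manipulations, and the same determinant-expansion identity $\bigwedge_i\bigl(\sum_k L_{i,k}\zeta^k\bigr)=\det(L_{i,k})\,\zeta^1\wedge\cdots\wedge\zeta^{n-1}$ at the end. The only difference is presentational: you make explicit the degree count showing that the pure-$U$ components $B_i$ cannot contribute to the top-degree form, a point the paper leaves implicit by never evaluating $F^\ast\omega_i$ on pairs $(\zeta_j,\zeta_k)$.
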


\begin{proof}
Fix $x_0\in U$ and put $\xi_i=\zeta_i(x_0)$. Considering $E(\partial \B)\subset V\times V^{n-1}$ we have
\begin{align*}
  dF_{(\lambda,x_0)}\left(\partial /\partial \lambda_j\right)
    & =(0 \,;\,0,\ldots,0,\xi_j,0,\ldots,0),\\
  \intertext{and}
  dF_{(\lambda,x_0)} (\xi_j)
    & =(\xi_j\,;\,\lambda_1 (d\zeta_1)_{x_0}(\xi_{j}) ,\ldots,\lambda_{n-1}(d\zeta_{n-1})_{x_0}(\xi_{j})).
\end{align*}
Thus
\[
  F^*\omega_i\left(\frac{\partial }{\partial \lambda_j},\frac{\partial }{\partial \lambda_k}\right)=0
\]
for any $i,j,k$ and
\[
  F^*\omega_i\left(\frac{\partial }{\partial \lambda_j},\xi_k\right)=0
\]
for any $i,j,k$ such that $i\neq j$, while
\begin{align*}
  F^*\omega_i\left(\frac{\partial }{\partial \lambda_i},\xi_j\right)
    & =-\det(\xi_i ,\lambda_i (d\zeta_i)_{x_0}(\xi_j),\lambda_1 \xi_1 - \lambda_i \xi_i ,\ldots, \widehat{\lambda_i \xi_i - \lambda_i \xi_i},\ldots, \lambda_{n-1} \xi_{n-1}  - \lambda_i \xi_i ) \\
    & =-\det(\xi_i ,\lambda_i (d\zeta_i)_{x_0}(\xi_j),\lambda_1 \xi_1 ,\ldots, \widehat{\lambda_i \xi_i },\ldots, \lambda_{n-1} \xi_{n-1} ) \\
    & =(-1)^{i+1}\, \, \left(\prod_{k=1}^{n-1} \lambda_k\right) \, \, \det(\nabla_{\xi_j}\zeta_i,\xi_1,\ldots, \xi_{n-1}),
\end{align*}
that is
\[
  F^*\omega_i\left(\frac{\partial }{\partial \lambda_i},\xi_j\right)=(-1)^{i+1}\, \, \left(\prod_{k=1}^{n-1} \lambda_k\right) \, \,  L_{j,i}(x_0,\xi_1,\ldots,\xi_{n-1}).
\]
Therefore, putting $L_{j,i}=L_{j,i}(x_0,\xi_1,\ldots,\xi_{n-1})$ and  $\xi^j=\zeta^j(x_0)$, we have
\begin{align*}
  &\hspace{-1em}F^\ast(\omega_1\wedge\cdots\wedge\omega_{n-1})_{(\lambda,x_0)} \\
  &=(-1)^{n-1}\, \, (-1)^{\frac{(n-1)n}{2}}\, \, \left(\prod_{i=1}^{n-1} \lambda_i^{n-1}\right) \, \,  \left(\sum_{j=1}^{n-1} L_{j,1} d\lambda_1\wedge \xi^j\right)\wedge\cdots\wedge\left(\sum_{j=1}^{n-1} L_{j,n-1} d\lambda_{n-1}\wedge \xi^j\right)\\
  &=\left(\prod_{i=1}^{n-1} \lambda_i^{n-1}\right) \, \,  d\lambda_1\wedge\cdots\wedge d\lambda_{n-1}\wedge\left(\sum_{j_1,\ldots,j_{n-1}} L_{j_1,1}\cdots L_{j_{n-1,n-1}}\xi^{j_1}\wedge\cdots \wedge \xi^{j_{n-1}}\right)\\
  &=\left(\prod_{k=1}^{n-1} \lambda_k^{n-1}\right) \, \,  \det (L_{i,j})_{i,j} \, \,  d\lambda_1\wedge\cdots\wedge d\lambda_{n-1}\wedge \xi^{1}\wedge\cdots \wedge \xi^{n-1}
\end{align*}
from which we get the announced formula.
\end{proof}

\subsection{Proof of Proposition \ref{prop:pullback}.}

Let us denote
 \[
  \Omega \coloneqq \frac{(n-1)!}{L} \, \, \omega_1\wedge\cdots\wedge\omega_{n-1}.
\]
The proposition will follow easily after proving that
\begin{enumerate}
  \item $F^\ast\Omega=F^\ast G^\ast \eta_{\B^\ast}$;
  \item $i_X\Omega=0$ for all $X \in \ker dG$.
\end{enumerate}
Indeed, given $\xi=(x,\xi_1,\ldots,\xi_{n-1})\in E(\partial \B)$ we can take a local section $\zeta$ of $E(\partial \B)$ such that $\zeta(x)=\xi$. In particular the map $F$ associated to this section satisfies that $F((1,\ldots,1),x)=\xi$. Then we observe that
\begin{equation*}\label{eq:direct_sum}
  \operatorname{im} dF_{((1,\ldots,1),x)} \oplus \ker d G_{\xi}=T_\xi E(\partial \B),
\end{equation*}
because $G\circ F$ is a diffeomorphism  according to Lemma \ref{lem:pullback_omega0}. Equality \eqref{eq:main_pullback} follows  directly using points  (1) and  (2).

Let us check point (1).
Note that (e.g. by Proposition \ref{prop:L_to_det})
\begin{equation}\label{eq:pullback_L}
   L\circ F(\lambda,x)= \left(\prod_{i=1}^{n-1} \lambda_i^{n+1}\right) \, \,  L(x,\zeta_1(x),\ldots,\zeta_{n-1}(x)).
\end{equation}
Hence, by Lemma \ref{lem:pullback_omega}, at $\xi=F(\lambda,x)$ we have
\begin{align*}
  F^\ast \Omega & =\frac{(n-1)!}{L\circ F(\lambda,x)}  \, \,F^\ast (\omega_1\wedge \cdots \wedge \omega_{n-1}) \\
                & =\frac{(n-1)!}{\prod_{i=1}^{n-1} \lambda_i^2}  \, \, d\lambda_1\wedge\cdots\wedge d\lambda_{n-1}\wedge \zeta^1(x)\wedge \cdots \wedge \zeta^{n-1}(x),
\end{align*}
which is precisely $F^\ast G^\ast\eta_{\B^\ast}$ by Lemma \ref{lem:pullback_omega0}.

In order to check point  (2), we easily verify that
\begin{equation}\label{eq:ker_dG}
  \ker dG_{\xi}=\langle (0,Z_{ij})\in V\times (V)^{n-1} \mid i,j \in \{1,\ldots,n-1\} \, \, \text{with} \, \, i\neq j \rangle
\end{equation}
where
\[
  Z_{ij}=(0,\ldots,0,\underbrace{\xi_j-\xi_i}_{i\text{th position}},0,\ldots,0)\in V.
\]
Indeed we have $G(x,\xi_1,\ldots,\xi_{i-1},(1-t)\xi_i+t\xi_j,\xi_{i+1},\ldots,\xi_{n-1})=G(x,\xi_1,\ldots,\xi_{n-1})$ for all $t$ and differentiating the curve
\[
  t \mapsto (\xi_1,\ldots,(1-t)\xi_i+t\xi_j,\ldots,\xi_{n-1})
\]
at time $0$ gives the vector $Z_{ij}$. The linear independence of $(\xi_1,\ldots,\xi_{n-1})$ then ensures that the family $\{Z_{ij} \mid i,j \in \{1,\ldots,n-1\} \, \, \text{with} \, \, i\neq j\}$ is also linearly independent. Its cardinality is $(n-1)(n-2)$, which coincides with $\dim\ker dG=\dim E(\partial \B)-2(n-1) $ as $G\circ F$ is a diffeomorphism. This shows \eqref{eq:ker_dG}.

Next we see that if $i\neq k$, then
\[
  (i_{(0,Z_{ij})}\omega_k)_{\xi}=-\det(d\pi_k(Z_{ij}),d\pi_k,\xi_1-\xi_k,\ldots,\widehat{\xi_k-\xi_k},\ldots,\xi_{n-1}-\xi_k) =0
\]
and that
\begin{align*}
  (i_{(0,Z_{ij})}\omega_i)_{\xi}
    & =-\det(d\pi_i(Z_{ij}),d\pi_i,\xi_1-\xi_i,\ldots,\widehat{\xi_i-\xi_i},\ldots,\xi_{n-1}-\xi_i)\\
    & =-\det(\xi_j-\xi_i,d\pi_i,\xi_1-\xi_i,\ldots,\widehat{\xi_i-\xi_i},\ldots,\xi_{n-1}-\xi_i)\\
    & =0,
\end{align*}
which proves point (2). $\hfill \square$

\section{First variation of the dual area under translations}\label{sec:firstvariation}

Recall that we have chosen some non-trivial alternate multilinear $n$-form denoted by $\det$ on the $n$-dimensional vector space $V$. Recall also that $\B\subset V$ is a convex body containing the origin in its interior and has smooth and positively curved boundary (i.e. the normed vector space $(V,\|\cdot\|_{\B})$ is Minkowski). Let $\K$ denote another convex body of $V$.

In this section we prove Theorem \ref{th:firstvariation} by explicitly computing
\[
  \left.\frac{d}{dt}\right|_{t=0}\area_{\B^\ast}({\partial(\K-tv)^\ast})
\]
for any $v\in V$.

\subsection{Preliminaries}

Let us begin with the following simple fact.

\begin{lemma}\label{lem:affine}
 Let $\xi=(x,\xi_1,\ldots,\xi_{n-1})\in E(\partial B)$ and $y\in V\setminus \{0\}$. The affine line $G(\xi)$ is contained in the affine hyperplane $H_y=\{p\in V^* \colon \langle p,y\rangle=1\}$ if and only if $y$ belongs to $\mathrm{aff}(\xi)$, the affine hull of $\xi_1,\ldots, \xi_{n-1}$.
\end{lemma}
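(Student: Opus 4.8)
The plan is to work in the basis of $V$ adapted to $\xi$ that is already implicit in the definition of the map $H$. Since $\langle\LL(x),x\rangle=1$ while $\LL(x)$ annihilates $T_x\partial B=\mathrm{span}(\xi_1,\ldots,\xi_{n-1})$, the point $x$ does not lie in $T_x\partial B$, so $\{x,\xi_1,\ldots,\xi_{n-1}\}$ is a basis of $V$; let $\{\LL(x),\xi^1,\ldots,\xi^{n-1}\}$ be its dual basis. Writing a covector as $p=\alpha\,\LL(x)+\sum_j\beta_j\xi^j$, one computes $\langle\xi_i,p\rangle=\beta_i$ (using $\langle\xi_i,\LL(x)\rangle=0$ and $\langle\xi_i,\xi^j\rangle=\delta_{ij}$), so the defining conditions $\langle\xi_i,p\rangle=1$ of $G(\xi)$ are exactly $\beta_i=1$ for all $i$. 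First I would record the resulting explicit description
\[
  G(\xi)=\{\,p_0+a\,\LL(x):a\in\R\,\},\qquad p_0=\sum_{i=1}^{n-1}\xi^i,
\]
that is, $G(\xi)$ is the affine line through the base point $p_0=p(x,\xi_1,\ldots,\xi_{n-1})$ with direction $\LL(x)$, in agreement with the relation $\pi\circ H=G$.

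With this parametrization at hand, the containment $G(\xi)\subset H_y$ becomes the requirement that $\langle p_0+a\,\LL(x),y\rangle=1$ hold for every $a\in\R$, i.e. $\langle p_0,y\rangle+a\,\langle\LL(x),y\rangle=1$ for all $a$. As this is an affine function of $a$ forced to be constantly $1$, it splits into two independent conditions: the direction condition $\langle\LL(x),y\rangle=0$ and the base-point condition $\langle p_0,y\rangle=1$. The first says precisely that $y\in\ker\LL(x)=T_x\partial B=\mathrm{span}(\xi_1,\ldots,\xi_{n-1})$. Under this hypothesis I would write $y=\sum_i d_i\xi_i$, so that $\langle p_0,y\rangle=\sum_{i,j}d_i\langle\xi^j,\xi_i\rangle=\sum_i d_i$; the second condition then reads $\sum_i d_i=1$. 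Thus $G(\xi)\subset H_y$ holds if and only if $y=\sum_i d_i\xi_i$ with $\sum_i d_i=1$, which is exactly the condition that $y$ lie in the affine hull $\mathrm{aff}(\xi)$ of $\xi_1,\ldots,\xi_{n-1}$, proving both implications at once.

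Since the whole argument reduces to elementary linear algebra in the adapted basis, there is no genuine obstacle. The only point requiring a little care is to test the containment at every point of the line rather than at a single one: it is this that produces the direction condition $\langle\LL(x),y\rangle=0$ alongside the base-point condition, the two together being equivalent to the \emph{affine} (rather than merely linear) combination constraint $\sum_i d_i=1$. The ``if'' direction could alternatively be seen in one line, since for $y=\sum_i d_i\xi_i$ with $\sum_i d_i=1$ and any $p\in G(\xi)$ one has $\langle p,y\rangle=\sum_i d_i\langle\xi_i,p\rangle=\sum_i d_i=1$; but the unified treatment via the explicit line is the cleanest way to obtain the converse as well.
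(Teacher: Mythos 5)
Your proof is correct and follows essentially the same route as the paper's: both describe $G(\xi)$ as a base point plus the direction $\LL(x)$, split the containment $G(\xi)\subset H_y$ into the direction condition $y\in\ker\LL(x)=\operatorname{span}(\xi_1,\ldots,\xi_{n-1})$ and the base-point condition, and then identify the latter with the affine constraint $\sum_i d_i=1$. The only cosmetic difference is that you compute the base point explicitly as $\sum_i \xi^i$ in the dual basis, whereas the paper leaves it as an unspecified $q\in V^*$.
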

\begin{proof}
Recall that 
\[
 G(\xi)= \{p\in V^*\colon \langle\xi_1, p\rangle=\cdots=\langle\xi_{n-1},p\rangle=1\}=q+\langle \LL(x)\rangle
\]
for some $q\in V^*$. Hence $G(\xi)\subset H_y$ if and only if $y\in \ker \LL(x)=\vect{\xi_1,\ldots,\xi_{n-1}}$ and $\langle q,y\rangle =1$. This is equivalent to $y=\sum_i\lambda_{i}\xi_i$ with $\sum_i\lambda_i=\langle q,y\rangle=1$, which means that $y$ is in the affine hull of the $\xi_i$.
\end{proof}

Given $v\in V$, let $E_v$ denote the set of $\xi\in E(\partial B)$ such that $-v\notin \mathrm{aff}(\xi)$. Let  $\overline T_v\colon E_v\to E(\partial \B)$ be given by
\[
  \overline T_v(x,\xi_1,\ldots, \xi_{n-1})= (x_v,\xi_1+v,\ldots,\xi_{n-1}+v).
\]
Here $x_v\in \partial \B$ is uniquely determined by the conditions that $\xi_i+v\in T_{x_v}\partial \B$ for all $i$  and that the basis $\{x_v,\xi_1+v,\ldots, \xi_{n-1}+v\}$ is negatively oriented. Note that $-v\notin\mathrm{aff}(\xi)$ ensures that $\xi_1+v,\ldots,\xi_{n-1}+v$ are linearly independent.

Let $T_v\colon G(E_v)\rightarrow G_+(V^*)$ be defined by
\[
  T_v(G(\xi))=G(\overline T_v(\xi)).
\]
To prove that this is independent of $\xi$, assume $G(\xi)=G(\xi')$ for some $\xi'=(x',\xi_1',\ldots,\xi_{n-1}')$. By Lemma \ref{lem:affine}, each $\xi_i'$ is in the affine hull of $\xi_1,\ldots,\xi_{n-1}$.  
Since $\xi_i'+v$ is clearly also in the affine hull of the $\xi_j+v$, we get by Lemma \ref{lem:affine} that $G(\overline T_v(\xi))=G(\overline T_v(\xi'))$ up to orientation. 
Since $v\mapsto x_{v}$ is continuous, the orientations of $G(\overline T_v(\xi))$ and $G(\overline T_v(\xi'))$ must  agree or disagree for all $v$; and they coincide for $v=0$. This shows that $T_v$ is well-defined.

\begin{lemma}\label{lem:lines_translation}
  For $v\in V$ and $L\in G_+(V^*)$ we have
  \begin{equation}\label{eq:equivalence}
    L\cap (\K-v)^\ast \neq \emptyset\quad \Longleftrightarrow\quad L\in G(E_v) \mbox{ and } T_v(L)\cap \K^\ast \neq\emptyset.
  \end{equation}
\end{lemma}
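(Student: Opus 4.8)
The key observation is that translating the convex body $\K$ by $-v$ corresponds, on the dual side, to an explicit deformation of oriented lines encoded by the maps $\overline T_v$ and $T_v$ introduced above. The statement is an equivalence, so I would prove the two implications separately, and the crux is to transport the incidence condition ``$L$ meets $(\K-v)^\ast$'' into an incidence condition ``$T_v(L)$ meets $\K^\ast$'' via a dual description of points of the bodies in terms of supporting hyperplanes of the line.

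First I would record the elementary duality dictionary relating $(\K-v)^\ast$ to $\K^\ast$. Since $(\K-v)^\ast = \{p \in V^\ast \mid \langle p, x - v\rangle \le 1 \ \forall x \in \K\}$, a covector $p$ lies in $(\K-v)^\ast$ exactly when the translated covector (the one adjusting for the linear functional $\langle \cdot, v\rangle$) lies in $\K^\ast$; more precisely $p\in(\K-v)^\ast \iff p\in\K^\ast$ after the affine correction coming from evaluation against $v$. The natural way to make this precise is through hyperplanes: $L\cap(\K-v)^\ast\neq\emptyset$ means $L$ passes through a point of $(\K-v)^\ast$, and I would characterize this by saying that the affine hyperplane $H_y$ supporting (or containing) $L$ relates to $v$ in the manner controlled by Lemma~\ref{lem:affine}.

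The heart of the argument is then the following chain. Write $L = G(\xi)$ for some $\xi\in E(\partial\B)$ (every oriented line in $G_+(V^\ast)$ arises this way, by the construction of $G$). The condition $L\in G(E_v)$ is exactly the condition $-v\notin\mathrm{aff}(\xi)$; by Lemma~\ref{lem:affine} this is precisely the requirement that $L$ is \emph{not} contained in the hyperplane $H_{-v}=\{p\colon\langle p,-v\rangle=1\}$, equivalently that $L$ is not forced into a degenerate position under the translation. Granting $L\in G(E_v)$, I would show that a point $p\in L$ lies in $(\K-v)^\ast$ if and only if the corresponding point $T_v$ sends it to lies in $\K^\ast$; concretely, the map $\overline T_v$ replaces each frame vector $\xi_i$ by $\xi_i+v$, which in the dual picture shifts the defining functionals $\langle\xi_i,p\rangle=1$ to $\langle\xi_i+v,p\rangle=1$, i.e.\ it reparametrizes $L$ along its direction $\LL(x)$ by the amount dictated by $\langle v,p\rangle$. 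Tracking a point $p\in L\cap(\K-v)^\ast$ through this reparametrization produces a point of $T_v(L)$, and the inequality $\langle p,x-v\rangle\le 1$ defining membership in $(\K-v)^\ast$ becomes exactly the inequality defining membership in $\K^\ast$ for the image point. This gives both directions at once, since each step is an equivalence.

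The main obstacle I anticipate is bookkeeping the orientation and the well-definedness already flagged in the text: one must check that the point of intersection is transported \emph{consistently}, not merely that some point of $(\K-v)^\ast$ on $L$ corresponds to some point of $\K^\ast$ on $T_v(L)$. The cleanest route is to make $T_v$ act not just on the line but on individual points along it --- i.e.\ to exhibit an explicit affine bijection $L\to T_v(L)$ (the reparametrization along $\LL(x)$ described above) and verify that it carries $L\cap(\K-v)^\ast$ \emph{bijectively} onto $T_v(L)\cap\K^\ast$. Once this pointwise map is in hand, the nonemptiness equivalence \eqref{eq:equivalence} is immediate, and the condition $L\in G(E_v)$ appears naturally as the precise hypothesis guaranteeing that the reparametrization is nondegenerate (the translated frame stays linearly independent). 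I would devote most of the write-up to verifying this pointwise correspondence and the accompanying orientation check, treating the two implications of \eqref{eq:equivalence} as the two directions of the resulting bijection.
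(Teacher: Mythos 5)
Your route is genuinely different from the paper's, and the difference matters. The paper never maps points of one line to points of the other: it characterizes the incidence $G(\zeta)\cap Q^\ast\neq\emptyset$ dually, as the condition that every affine hyperplane $H_y$ containing $G(\zeta)$ meets $Q^\ast$, converts this via Lemma~\ref{lem:affine} into a condition on the points $y\in\mathrm{aff}(\zeta)$, and thereby collapses both sides of \eqref{eq:equivalence} to one and the same primal statement, namely that $\mathrm{aff}(\xi)+v$ avoids $\K$; the hypothesis $0\in\inter{\K}$ then absorbs the extra condition $L\in G(E_v)$. Your plan --- an explicit pointwise correspondence $L\to T_v(L)$ carrying $L\cap(\K-v)^\ast$ onto $T_v(L)\cap\K^\ast$ --- can also be carried out, but only after supplying the one ingredient your write-up omits, and that ingredient is the real content of the lemma.

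Concretely: duality does not convert translation of $\K$ into a translation, or any affine map, of covectors. The map induced on covectors is the projective rescaling $p\mapsto q=p/(1+\langle p,v\rangle)$. It does satisfy $\langle\xi_i+v,q\rangle=1$, hence sends $L$ into $T_v(L)$, but it is not affine, and it is undefined where $1+\langle p,v\rangle=0$; this locus is $L\cap H_{-v}$, and by Lemma~\ref{lem:affine} it is all of $L$ exactly when $L\notin G(E_v)$, which is how $E_v$ really enters. Most importantly, the step you describe as ``the inequality $\langle p,x-v\rangle\le 1$ becomes exactly the inequality defining membership in $\K^\ast$'' is an equivalence only when $1+\langle p,v\rangle>0$, since division by a negative number reverses inequalities. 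That positivity is where the geometry enters: $0\in\inter{\K}$ forces $1+\langle p,v\rangle>0$ for every $p\in L\cap(\K-v)^\ast$ (giving the forward implication), while the inverse map $q\mapsto q/(1-\langle q,v\rangle)$ needs $\langle q,v\rangle<1$ on $T_v(L)\cap\K^\ast$, which is guaranteed only when $v\in\inter{\K}$ --- the only case the paper actually uses ($tv\in\inter{\K}$). For arbitrary $v\in V$, the generality in which you claim a bijection, none can exist: with $\K$ the unit disc in $\R^2$, $v=(10,0)$ and $\xi_1=(-9,0)$, one finds $L=\{p_1=-1/9\}$ and $L\cap(\K-v)^\ast=\emptyset$, yet $T_v(L)=\{p_1=1\}$ meets $\K^\ast$ in the single point $(1,0)$. (A smaller slip of the same kind: lines through the origin of $V^\ast$ are not of the form $G(\xi)$, so ``every oriented line arises this way'' also fails.) So the crux is the sign analysis hidden behind your sentence ``each step is an equivalence''; by contrast, the orientation bookkeeping you single out as the main obstacle is harmless here, since none of the three conditions in \eqref{eq:equivalence} depends on the orientations of the lines involved.
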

\begin{proof}
Given $\zeta=(x,\zeta_1,\ldots,\zeta_{n-1})\in E(\partial B)$  and a convex body $Q$ in $V$, note that $G(\zeta)\cap Q^\ast\neq \emptyset$ if and only if every hyperplane containing $G(\zeta)$ intersects $Q^*$. By Lemma \ref{lem:affine} this is equivalent to the fact that $H_y\cap Q^\ast\neq\emptyset$ for every $y$ in the affine hull of $\zeta_1,\ldots,\zeta_{n-1}$. In turn, this is equivalent to $y\notin Q$ for all such $y$.

Let now $L=G(\xi)$. Taking $\zeta=\xi$ and $Q=\K-v$ we get that the left hand side of \eqref{eq:equivalence} holds if and only if $y\notin \K-v$ for every $y\in\mathrm{aff}(\xi)$. Since $0\in\inter{\K}$, this is equivalent to the fact that $-v\notin \mathrm{aff}(\xi)$ and $z\notin K$ for all $z\in\mathrm{aff}(\overline T_v(\xi))$. 
Taking $\zeta=\overline T_v(\xi)$ and $Q=\K$ this condition is equivalent to the right hand side of \eqref{eq:equivalence}.
\end{proof}
 
Note that, by the first part of the previous proof, if $G(\xi)\cap K^*\neq\emptyset$ and $v\in K$,  then $\xi\in E_{-v}$.  Then, by Proposition \ref{prop:AP} and Lemma \ref{lem:lines_translation}, for  $tv\in\inter{K}$ 
\begin{align*}
  \area_{\B^\ast}(\partial (\K-tv)^\ast) 
  & =\frac{1}{(n-1)!\eps_{n-1}}\int_{\{L\in G(E_{tv})\colon T_{tv}(L)\cap {\K^\ast}\neq \emptyset\}} \eta_{\B^\ast} \\
    & =\frac{1}{(n-1)!\eps_{n-1}}\int_{\{T_{-tv}(L)\colon L\in G_+(V^*), L\cap {\K^\ast}\neq \emptyset\}} \eta_{\B^\ast} \\
    & =\frac{1}{(n-1)!\eps_{n-1}}\int_{\{L\in G_+(V^*)\colon L\cap {\K^\ast}\neq \emptyset\}}T_{-tv}^\ast \, \eta_{\B^\ast}.
\end{align*}

Consider the vector field $Z_v$ on $E(\partial \B)$ given by
\[
 Z_v(x,\xi_1,\ldots,\xi_{n-1})=\left.\frac{d}{dt}\right |_{t=0} \overline T_{tv}(x,\xi_1,\ldots,\xi_{n-1}),
\]
and recall that
\[
  G^*\eta_{\B^\ast} = \Omega  =  \frac{(n-1)!}{L} \, \, \omega_1\wedge\cdots\wedge\omega_{n-1}.
\]
Let now  $\{U_i\}$ be a finite cover of $\partial \B$, take a local section $\zeta_i$ of each $E(U_i)$, and let $F_i\colon (\R\setminus\{0\})^{n-1}\times U_i\to E(\partial \B)$ be the corresponding map. Since $G\circ F_i$ is a diffeomorphism onto its image, taking a partition of unity $\rho_i$ subordinate to $\{U_i\}$, we have
\begin{align*}
  \area_{\B^\ast}(\partial (\K-tv)^\ast)
    & =\frac{1}{(n-1)!\eps_{n-1}}\sum_i\int_{(G\circ F_i)^{-1}(\{L\colon L\cap {\K^\ast}\neq \emptyset\})}\rho_i\cdot F_i^\ast G^\ast T_{-tv}^\ast \, \eta_{\B^\ast} \\
    & =\frac{1}{(n-1)!\eps_{n-1}}\sum_i\int_{(G\circ F_i)^{-1}(\{L\colon L\cap {\K^\ast}\neq \emptyset\})}\rho_i\cdot F_i^\ast \overline T_{-tv}^\ast \, \Omega
\end{align*}
and we deduce that
\begin{equation}\label{eq:variation_lie}
  \left.\frac{d}{dt}\right |_{t=0}  \area_{\B^\ast}(\partial(\K-tv)^\ast)=-\frac{1}{(n-1)!\eps_{n-1}}\sum_i\int_{(G\circ F_i)^{-1}(\{L\colon L\cap {\K^\ast}\neq \emptyset\})}\rho_i\cdot F_i^\ast  \LL_{Z_v}\Omega.
\end{equation}

\begin{proposition}\label{prop:lie_derivative}
For any $\xi=(x,\xi_1,\ldots,\xi_{n-1})\in E(\partial \B)$ and any $v \in V$
\[
  \LL_{Z_v}\Omega= -(n-1)!\, \,\frac{\LL_{Z_v} L}{L^2} \, \,\omega_1\wedge\cdots\wedge\omega_{n-1}
\]
and
\[
  \LL_{Z_v} L (\xi)=(n+1)\, \,(\det(\Xi(x),\xi_1,\ldots, \xi_{n-1}))^{n} \, \, \left(\sum_{i=1}^{n-1}{\det(\Xi(x),\xi_1,\ldots,\xi_{i-1},v,\xi_{i+1},\ldots, \xi_{n-1})}\right)
\]
where $\Xi$ is the interior equiaffine normal field of $\partial \B$.
\end{proposition}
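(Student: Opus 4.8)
The plan is to prove the two displayed identities separately, both resting on the observation that $Z_v$ is by construction the velocity field of the local flow $\overline{T}_{tv}$. The one-parameter property $\overline{T}_{(s+t)v}=\overline{T}_{sv}\circ\overline{T}_{tv}$ holds because $\overline{T}$ merely translates each frame vector by a multiple of $v$ while $x$ is pinned down by the tangency condition; hence every integral curve of $Z_v$ has the form $t\mapsto\overline{T}_{tv}(\xi)$, and consequently $\LL_{Z_v}\alpha=\frac{d}{dt}\big|_{t=0}\overline{T}_{tv}^{\,*}\alpha$ for any differential form $\alpha$.

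For the first identity, the plan is to show that each $2$-form $\omega_i$ is invariant under $\overline{T}_v$, i.e.\ $\overline{T}_v^{\,*}\omega_i=\omega_i$ wherever defined. The map $\overline{T}_v$ sends $\xi_j\mapsto\xi_j+v$, so the differences $\xi_j-\xi_i$ occurring in the definition of $\omega_i$ are unchanged; and since $\pi_i\circ\overline{T}_v=\pi_i+v$ differs from $\pi_i$ only by the constant $v$, their differentials coincide, $d(\pi_i\circ\overline{T}_v)=d\pi_i$. Substituting these two facts into the determinant defining $\omega_i$ yields the invariance, whence $\LL_{Z_v}\omega_i=0$ for each $i$ and therefore $\LL_{Z_v}(\omega_1\wedge\cdots\wedge\omega_{n-1})=0$. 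Applying the Leibniz rule to $\Omega=\frac{(n-1)!}{L}\,\omega_1\wedge\cdots\wedge\omega_{n-1}$, the wedge term then drops out and only $\LL_{Z_v}\!\big(\tfrac{(n-1)!}{L}\big)=-(n-1)!\,\LL_{Z_v}L/L^2$ survives, which is exactly the asserted formula.

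For the second identity, I would start from Proposition \ref{prop:L_to_det}, which gives $L=\phi^{\,n+1}$ with $\phi(\xi)=\det(\Xi(x),\xi_1,\ldots,\xi_{n-1})$, so that $\LL_{Z_v}L=(n+1)\phi^{\,n}\,\LL_{Z_v}\phi$. It then suffices to differentiate $\phi(\overline{T}_{tv}(\xi))=\det(\Xi(x_{tv}),\xi_1+tv,\ldots,\xi_{n-1}+tv)$ at $t=0$. By multilinearity of $\det$ this is a sum of $n$ terms: one in which $\Xi(x_{tv})$ is differentiated, and $n-1$ terms in which a single entry $\xi_i+tv$ is differentiated, each contributing the vector $v$ in the $i$-th slot and producing precisely $\sum_{i=1}^{n-1}\det(\Xi(x),\xi_1,\ldots,\xi_{i-1},v,\xi_{i+1},\ldots,\xi_{n-1})$.

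The crux is to show that the first of these terms vanishes. Setting $w=\frac{d}{dt}\big|_{t=0}x_{tv}$, which is tangent to $\partial\B$ at $x$ because the curve $t\mapsto x_{tv}$ stays on $\partial\B$, this term equals $\det(\nabla_w\Xi,\xi_1,\ldots,\xi_{n-1})$, since the derivative of the $V$-valued map $\Xi$ along $w$ is the flat covariant derivative $\nabla_w\Xi$. By property i) of the equiaffine normal field in Theorem \ref{thm:equiaffine}, $\nabla_w\Xi\in T_x\partial\B=\vect{\xi_1,\ldots,\xi_{n-1}}$, so this determinant is zero. Combining the surviving terms gives $\LL_{Z_v}\phi=\sum_{i=1}^{n-1}\det(\Xi(x),\xi_1,\ldots,v,\ldots,\xi_{n-1})$ and hence the stated expression for $\LL_{Z_v}L$. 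The main obstacle is not a hard computation but the careful bookkeeping around this moving frame: justifying the smooth dependence of $x_{tv}$ on $t$ and correctly recognizing its velocity term as $\nabla_w\Xi$, which is exactly what makes the equiaffine tangency condition applicable.
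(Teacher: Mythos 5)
Your proof is correct and takes essentially the same route as the paper: the first identity comes from $\LL_{Z_v}\omega_i=0$ (translation invariance of the $\omega_i$ under $\overline{T}_{tv}$) plus the Leibniz rule, and the second from writing $L=\det(\Xi(x),\xi_1,\ldots,\xi_{n-1})^{n+1}$ via Proposition \ref{prop:L_to_det}, differentiating along $\overline{T}_{tv}$, and killing the term $\det(\nabla_w\Xi,\xi_1,\ldots,\xi_{n-1})$ by the equiaffine tangency condition, exactly as the paper does via the first identity of \eqref{eq:conditions}. You merely make explicit two points the paper leaves implicit, namely the local one-parameter (flow) property of $\overline{T}_{tv}$ and the reason each $\omega_i$ is invariant.
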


\begin{proof}
The first equality follows from $\LL_{Z_v} \omega_i=0$. As for the second, by Proposition \ref{prop:L_to_det} we have
\[
  L(\overline T_v(\xi))=L(x_{tv},\xi_1+tv,\ldots, \xi_{n-1}+tv)=\det(\Xi(x_{tv}),\xi_1+tv,\ldots, \xi_{n-1}+tv)^{n+1}.
\]
Hence
\[
  \LL_{Z_v}L(\xi) = (n+1) \, \, \det(\Xi(x),\xi_1,\ldots, \xi_{n-1})^n \, \, \left.\frac{d}{dt}\right |_{t=0} \det(\Xi(x_{tv}),\xi_1+t v,\ldots ,\xi_{n-1}+tv).
\]
The proposition follows by noting that $\left.\frac{d}{dt}\right |_{t=0} \Xi(x_{tv})=\nabla_X \Xi(x)$ with $X=\left.\frac{d}{dt}\right |_{t=0} x_{tv}$ which according to the first identity of (\ref{eq:conditions}) implies that
\[
  \left.\frac{d}{dt}\right |_{t=0} \det(\Xi(x_{tv}),\xi_1+t v,\ldots ,\xi_{n-1}+tv)=\sum_{i=1}^{n-1}\det\left(\Xi(x),\xi_1,\ldots ,\xi_{i-1},v,\xi_{i+1},\ldots,\xi_{n-1}\right).
\]
\end{proof}

\subsection{Proof of Theorem \ref{th:firstvariation}}

Take a local section $\zeta\colon U\to E(\partial \B)$ mapping each $x\in U$ to a positive  orthonormal basis $\{e_1(x),\ldots,e_{n-1}(x)\}$ of $T_x\partial \B$ with respect to the Riemannian metric $g$. For all $x \in U $ let $\{e^1(x),\ldots,e^{n-1}(x),\Xi^*(x)\}$ denote the dual basis of $\{e_1(x),\ldots,e_{n-1}(x),\Xi(x)\}$. Since $\ker \Xi^*(x)=\vect{e_1(x),\ldots,e_{n-1}(x)}$, we have $\Xi^*(x)\in\langle \LL(x)\rangle$.  Take the smooth measure $\mu=e^1\wedge\cdots\wedge e^{n-1}$ on $U$.  For each $x\in U$ the dual measure $\nu_x$ on $\ker\Xi(x)\simeq T^*_x\partial \B$ of the Lebesgue measure $\mu_x$ is given by $\nu_{x}=e_1(x)\wedge\cdots\wedge e_{n-1}(x)$.

First observe that if $F$ denotes the map corresponding to $\zeta$ we have
\begin{align*}
  L\circ F(\lambda,x)
    & =L(x,\lambda_1e_1(x),\ldots,\lambda_{n-1}e_{n-1}(x))                                               \\
    & =\left(\prod_{i=1}^{n-1} \lambda_i^{n+1}\right) \, \, \det(\Xi(x),e_1(x),\ldots, e_{n-1}(x))^{n+1} \\
    & =\left(\prod_{i=1}^{n-1} \lambda_i^{n+1}\right)
\end{align*}
using the fact that  $\{e_1(x),\ldots,e_{n-1}(x)\}$ is a positive orthonormal basis together with point ii) in Theorem \ref{thm:equiaffine}, and that similarly
\[
  \LL_{Z_v} L\circ F(\lambda,x)=(n+1)\, \,\left(\prod_{i=1}^{n-1} \lambda_i^{n+1}\right)  \, \, \sum_{i=1}^{n-1} \frac1{\lambda_i} \langle e^i(x),v\rangle
\]
where $\langle\cdot,\cdot\rangle$ denotes the canonical duality pairing.
Furthermore by Lemma \ref{lem:pullback_omega} the following holds:
\[
  F^\ast(\omega_1\wedge \cdots \wedge \omega_{n-1})_{(\lambda,x)}=  \left(\prod_{i=1}^{n-1} \lambda_i^{n-1}\right) \, \, d\lambda_1\wedge\cdots\wedge d\lambda_{n-1}\wedge e^1(x)\wedge \cdots \wedge e^{n-1}(x).
\]
Therefore, using Proposition \ref{prop:lie_derivative},  we get the following formula:
\[
  F^*(\LL_{Z_v}\Omega)_{(\lambda,x)}=-\frac{(n+1)!}{n} \, \,  \left\langle  \sum_{i=1}^{n-1} \frac1{\lambda_i}   e^i(x),v\right\rangle  \, \, \left(\prod_{i=1}^{n-1}  \frac{1}{\lambda_i^{2}}\right)  \, \,  d\lambda_1\wedge\cdots\wedge d\lambda_{n-1}\wedge d\mu(x).
\]
Let us define the map
\begin{align*}
    p_{x}: (\R\setminus\{0\})^{n-1} & \to \ker \Xi(x) \subset V^\ast \\
  (\lambda_1,\ldots, \lambda_{n-1}) & \mapsto \sum_{i=1}^{n-1} \frac1{\lambda_i}   e^i(x)
\end{align*}
which is a diffeomorphism onto its image that satisfies
\[
 G\circ F( \lambda,x)=p_{x}(\lambda)+\langle \LL(x)\rangle_+.
\]
In particular, we have
\begin{align*}
  (G\circ F)^{-1}(\{L\colon L\cap {\K^\ast}\neq \emptyset\})
    & =\{(\lambda,x)\in (\R\setminus\{0\})^{n-1}\times U\colon G\circ F(\lambda,x)\cap {\K^\ast}\neq \emptyset\} \\
    & =\{(\lambda,x)\in (\R\setminus\{0\})^{n-1}\times U\colon p_{x}(\lambda)\in\pi_{x}({\K^\ast})\}
\end{align*} where $\pi_x\colon V^*\to \ker\Xi(x)$ is the linear projection with $\ker\pi_x=\langle \LL(x)\rangle$.

A simple computation shows that
\begin{align*}
(p^{-1}_{x})_* \, \nu_{x}&=\left(\prod_{i=1}^{n-1}  \frac{1}{\lambda_i^{2}}\right)  \, \,  d\lambda_1\wedge\cdots\wedge d\lambda_{n-1}
\end{align*}
as  measures. Therefore
\begin{align*}
 & \hspace{-2em} \int_{(G\circ F)^{-1}(\{L\colon L\cap {\K^\ast}\neq \emptyset\})} F^*\LL_{Z_v}\Omega \\
 & =-\frac{(n+1)!}{n}\int_{x\in U}\left(\int_{\lambda\in p_{x}^{-1} (\pi_{x}(\K^\ast))} \langle p_{x}(\lambda),v\rangle \, \, d(p^{-1}_{x})_*(\nu_{x})\right)  \,  d \mu(x) \\
 & =-\frac{(n+1)!}{n}\int_{x\in U}\left(\int_{q\in \pi_{x}(\K^\ast)} \, \, \langle q,v\rangle \,  d\nu_{x}(q)\right)  \,  d\mu(x)                                           \\
 & =-\frac{(n+1)!}{n}\left\langle\int_{x\in U}  \left(\int_{\pi_{x}(\K^\ast)}  \, \, q  \, d\nu_{x}(q) \right)  \, d\mu(x),v\right\rangle.
\end{align*}
Covering $\partial \B$ with local sections and using \eqref{eq:variation_lie}, we deduce the stated formula:
\[
  \left.\frac{d}{dt}\right |_{t=0}  \area_{\B^\ast}(\partial(\K-tv)^\ast)=\frac{n+1}{\eps_{n-1}}\left\langle\int_{x\in \partial B}  \left(\int_{\pi_{x}(\K^\ast)}  \, \, q  \, d\nu_{x}(q) \right)  \, d\mu(x),v\right\rangle. \qquad \qquad  \square
\]



\begin{thebibliography}{XX}

\bibitem[\'Alvarez Paiva 1998]{Alv98}
\'Alvarez Paiva, J.C.: {\it Integral geometry on Finsler manifolds}. Preprint (1998).

\bibitem[\'Alvarez \& Fernandes 1998]{AF98}
\'Alvarez Paiva, J.C. \& Fernandes, E.: {\it Crofton formulas in projective Finsler spaces}.
Elec. Research  Announc. of the A.M.S. \textbf{4} (1998), 91-100.

\bibitem[\'Alvarez \& Thompson 2004]{AT04}
\'Alvarez Paiva, J.C \&  Thompson, A.C.: {\it Riemann-Finsler Geometry}.  MSRI Publications \textbf{50} (2004).

\bibitem[Arnold \& Givental 1990]{AG90}
V.I. Arnold \& A.B. Givental: {\it Symplectic geometry}. Dynamical Systems IV, Encyclopaedia of Mathematical Sciences, Vol. 4, V.I. Arnold and S.P. Novikov (Eds.), Springer- Verlag, Berlin, Heidelberg, New York, 1990.

\bibitem[Besse 1978]{Besse}
A. Besse: {\it Manifolds All of Whose Geodesics are Closed}. Springer-Verlag, Berlin, Heidelberg, New York, 1978.

\bibitem[Blaschke 1917]{Bl17}
W. Blaschke: {\it \"Uber affine Geometrie VII: Neue Extremeingenschaften von Ellipse und Ellipsoid}.
Ber. Verh. S\"achs. Akad. Wiss., Math. Phys. Kl. \textbf{69} (1917), 412-420.

\bibitem[Bourgain \& Milman 1987]{BM87}
Bourgain, J. \& Milman, V.: {\it New volume ratio properties for convex symmetric bodies in $\R^n$}. Invent. Math. \textbf{88} (1987), 319-340.

\bibitem[Busemann \& Petty 1956]{BP56}
Busemann, H. \& Petty, C.: {\it Problems on convex bodies}. Math. Scand. \textbf{4} (1956), 88-94.

\bibitem[Golab 1932]{Gol32}
Golab, S.: {\it Quelques probl\`emes m\'etriques de la g\'eom\'etrie de Minkowski}. Trav. l'Acad. Mines Cracovie \textbf{6} (1932), 1-79.

\bibitem[Gr\"unbaum 1963]{Grun63}
Gr\"unbaum, B.: {\it Measures of symmetry for convex sets}. Proc. Sympos. Pure Math. {\bf 7} (1963), 233-270.


\bibitem[Holmes \& Thompson 1979]{HT79}
Holmes, R. D.  \& Thompson, A. C.: {\it $N$-dimensional area and content in Minkowski spaces}. Pacific J. Math. \textbf{85} (1979), 77-110.

\bibitem[Iriyeh \& Shibata 2020]{IS20}
Iriyeh, H. \& Shibata, M.: {\it Symmetric Mahler's conjecture for the volume product in the $3$-dimensional case}.
Duke Math. J. \textbf{169} (2020), 1077-1134.

\bibitem[Mahler 1939]{Mah39}
Mahler, K.: {\it Ein \"Ubertragungsprinzip f\"ur konvexe k\"orper}. Casopis Pest Mat. Fys. \textbf{68}, (1939), 93-102.

\bibitem[Nomizu \& Sasaki 1994]{nomizu_sasaki}
Nomizu, K. \&  Sasaki, T.: {\it Affine differential geometry}, Cambridge University Press, 1994.

\bibitem[Petty 1971]{Petty71}
Petty, C. M.: {\it Isoperimetric problems}.
Proc. Conf. Convexity Combinat. Geom. (1971), 26-41, Univ. Oklahoma, Norman, OK.

\bibitem[Reidemeister 1921]{Reid21}
Reidemeister, K.:  {\it \"Uber die singul\"aren Randpunkte eines konvexen  K\"orpers}. Math. Ann. \textbf{83} (1921), 116-118.

\bibitem[Reisner 1985]{Reis85}
Reisner, S.: {\it Random polytopes and the volume-product of symmetric convex bodies}.
Math. Scand. \textbf{57} (1985), 386-392.

\bibitem[Rockafellar 1997]{Rock97}
Rockafellar, R. T.: {\it  Convex analysis.}
Princeton Landmarks in Mathematics, Princeton University Press (1997).

\bibitem[Santal\'o 1949]{San49}
Santal\'o, L. A.: {\it Un invariante afin para los cuerpos convexos del espacio de n dimensiones}. Portugaliae Math. \textbf{8} (1949), 155-161.

\bibitem[Schneider 2013]{Schn13}
Schneider, R.; {\it Convex Bodies: The Brunn-Minkowski Theory.}
Encyclopedia of Mathematics and its Applications,  Cambridge University Press (2013).

\bibitem[Schneider 2006]{Sch06}
Schneider, R.: {\it Crofton measures in projective Finsler spaces}, Integral geometry and convexity (2006), 67-98,
  World Sci. Publ., Hackensack, NJ.

\bibitem[Thompson 1996]{Tho96}
Thompson, A. C.: {\it Minkowski Geometry}.
Encyclopedia of Mathematics and its Applications 63, Cambridge University Press (1996).

\end{thebibliography}
\end{document}